\newcommand{\bb}{\mathbb B}
\newcommand{\bc}{\mathbb C}
\newcommand{\bk}{\mathbb K}
\newcommand{\bm}{\mathbb M}
\newcommand{\bn}{\mathbb N}
\newcommand{\bp}{\mathbb P}
\newcommand{\bq}{\mathbb Q}
\newcommand{\bt}{\mathbb T}
\newcommand{\bu}{\mathbb U}
\newcommand{\bz}{\mathbb Z}
\newcommand{\bw}{\mathbf{w}}
\newcommand{\sr}{\boldsymbol{\sigma}}
\newcommand{\obg}{\overline{\bg}}
\newcommand{\oA}{\overline{A}}
\newcommand{\oB}{\overline{B}}
\newcommand{\oG}{\overline{G}}
\newcommand{\oT}{\overline{T}}
\newcommand{\oH}{\overline{H}}
\newcommand{\oM}{\overline{M}}
\newcommand{\oP}{\overline{P}}
\newcommand{\og}{\overline{g}}
\newcommand{\oh}{\overline{h}}
\newcommand{\ot}{\overline{t}}
\newcommand{\oz}{\overline{z}}
\newcommand{\ochi}{\overline{\chi}}
\newcommand{\bfc}{\mathbf{c}}
\newcommand{\bfg}{\mathbf g}
\newcommand{\bfs}{\mathbf{s}}
\newcommand{\Ftn}{\mathrm{Ftn}}
\newcommand{\id}{\mathrm{id}}
\newcommand{\Wh}{\mathrm{Wh}}
\newcommand{\bg}{\mathbb{G}}
\newcommand{\bgl}{\mathbb{GL}}
\newcommand{\bsl}{\mathbb{SL}}
\newcommand{\bfgg}{\mathbf g}
\newcommand{\cw}{\mathcal{W}}
\newcommand{\la}{\langle}
\newcommand{\ra}{\rangle}
\newcommand{\bs}{\backslash}
\newcommand{\al}{\alpha}
\newcommand{\lam}{\lambda}
\DeclareMathOperator{\GL}{GL}
\DeclareMathOperator{\Hom}{Hom}
\DeclareMathOperator{\Ind}{Ind}
\DeclareMathOperator{\SL}{SL}
\newtheorem{Thm}{Theorem}[section]
\newtheorem{Prop}[Thm]{Proposition}
\newtheorem{Lem}[Thm]{Lemma}
\newtheorem{Cor}[Thm]{Corollary}
\theoremstyle{definition}
\newtheorem{Def}[Thm]{Definition}
\theoremstyle{remark}
\newtheorem{Rem}[Thm]{Remark}
\newtheorem{Ex}[Thm]{Example}
\theoremstyle{definition}
\title{Unramified Whittaker functions for certain Brylinski-Deligne covering groups}
\author{Yuanqing Cai}
\date\today
\address{Department of Mathematics, Weizmann Institute of Science, Rehovot, 7610001, Israel}
\email{yuanqing.cai@weizmann.ac.il}
\thanks{This research was supported by the ERC, StG grant number 637912.}
\subjclass[2010]{Primary 11F70; Secondary 22E50, 11F68}
\keywords{Brylinski-Deligne covering groups, unramified Whittaker functions, Whittaker models, local coefficient matrix}
\begin{document}

\begin{abstract}
For a Brylinski-Deligne covering group of a general linear group, we calculate some values of unramified Whittaker functions for certain representations that are analogous to the theta representations.
\end{abstract}

\maketitle
\tableofcontents


\section{Introduction}

The unramified Whittaker functions and their analogues play an important
role in modern number theory, arising naturally as terms in the Fourier coefficients of automorphic forms. It is generally desirable to calculate explicit values for these functions, as the information proves useful in many aspects of study related to the automorphic form (for example, in
the construction of associated $L$-functions). When an automorphic representation possesses a Whittaker model or another suitable unique model, the method
described in \cite{CS80} may be used to compute an explicit formula (the Casselman-Shalika formula) for the values of the unramified Whittaker function (or the analogous function). 

In this paper, we consider representations of Brylinski-Deligne covering groups.
For these groups, the uniqueness of Whittaker models fails in general. This causes obstructions to some advancement of the theory. Nevertheless, in the past decades, it is discovered that Fourier coefficients of Eisenstein series on covering groups are closely tied to the Weyl group multiple Dirichlet series. This leads to several generalizations of the Casselman-Shalika formula to the covering group setup.  One is to interpret the value of an unramified Whittaker function as a weighted sum over a crystal graph. In this vein, this beautiful idea is realized in \cite{BBF11,McNamara11,FZ15} for root systems of type $A$ and $C$. The other description is to express the value as the average of a Weyl group action. This approach is closer to the one of Casselman-Shalika and is successful for all types of root systems (see \cite{CO13,McNamara16,CG10}). In the linear case, the equivalence of these two descriptions is a formula of Tokuyama.

However, the formulas mentioned above are not explicit to work with. To seek applications towards the theory of automorphic forms on covering groups, we would like to have a formula analogous to the original Casselman-Shalika formula. At the moment, we believe that this is impossible in general. Thus, in this paper, we would like to consider the following weaker question:
\begin{itemize}
\item For representations on covering groups with additional features (for example, theta representations), is it possible to give a simple formula for some values of the unramified Whittaker functions?
\end{itemize}

In this paper, we address this question for Brylinski-Deligne covering groups of general linear groups. We give an answer to this question for a family of representations, that can be viewed as analogues of the theta representations. Such representations were also studied in \cite{Suzuki97,Suzuki98}, and a formula was successfully obtained in some cases. Our results generalize part of Suzuki's results.

Let $\bg=\bgl_r$ over a local non-Archimedean field $F$ and $\oG$ be the degree $n$ Brylinski-Deligne covering group arising from a $\bk_2$-extension $\obg$ of $\bg$. Let $\bm$ be a Levi subgroup of $\bg$. Let $I(\ochi)$ be an unramified principle series representation of $\oG$. Suppose that $\ochi$ is an ``anti-exceptional character in $\bm$'' (Definition \ref{def:exceptional}). Let $\bw_M$ be the long element in the Weyl group $W(\bm)$. Define $\Theta(\oG/\oM,\ochi)$ as the image of the intertwining operator $I(\ochi)\to I({}^{\bw_M}\ochi)$ (Sect. \ref{sec:relative theta}). Let $\cw_0(\og,\ochi)$ be an unramified Whittaker function in a certain Whittaker model of $\Theta(\oG/\oM,\ochi)$. Let $e$ be the identity element in $\oG$.

\begin{Thm}[Theorem \ref{thm:final formula}]
With the above notations and certain assumptions on the rank of $\bm$ and the degree of $\oG$, $\cw_0(e,\ochi)$ is a product of a certain Gauss sum and a polynomial in terms of `Satake parameters' of $I(\ochi)$.
\end{Thm}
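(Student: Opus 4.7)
The strategy is to reduce the computation of $\cw_0(e,\ochi)$ to a composition of a standard Whittaker functional with an intertwining operator, and then to compute that composition by means of a local coefficient matrix. By construction, the normalized spherical vector $\phi^\circ_\Theta$ of $\Theta(\oG/\oM,\ochi)$ is the image of the normalized spherical vector $\phi^\circ_\ochi \in I(\ochi)$ under the intertwining operator $M(\bw_M,\ochi)\colon I(\ochi)\to I({}^{\bw_M}\ochi)$, so for a suitable Whittaker functional $\Lambda$ on $I({}^{\bw_M}\ochi)$ one may write
$$\cw_0(e,\ochi) \;=\; \Lambda\bigl(M(\bw_M,\ochi)\,\phi^\circ_\ochi\bigr).$$
The right-hand side factors the computation into an intertwining-operator piece (carrying polynomial-in-Satake-parameters information via the covering Gindikin--Karpelevich formula) and a Whittaker-functional piece (carrying the Gauss-sum information via the Casselman--Shalika-type analysis of the Jacquet integral for coverings).

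Because Whittaker models are not unique on $\oG$, the composition $\Lambda\circ M(\bw_M,\ochi)$ is not a scalar multiple of a single Whittaker functional on $I(\ochi)$; it is a linear combination of functionals in the finite-dimensional Whittaker space of $I(\ochi)$, organized by the local coefficient matrix. I would compute this matrix by writing $\bw_M = s_{i_1}\cdots s_{i_\ell}$ as a reduced product of simple reflections in $W(\bm)$ and iterating the rank-one scattering formulas of \cite{McNamara16,CO13} one reflection at a time. Each rank-one step contributes a block whose entries are Gauss sums attached to $n$-th power residue symbols together with explicit rational factors in the Satake parameters; braid compatibility of these blocks, established in the same references, guarantees independence of the reduced decomposition.

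The critical input is the anti-exceptional hypothesis on $\ochi$: the defining numerical relations among the values $\ochi(H_\al)$ for positive roots $\al$ of $\bm$ are exactly those that force all but one term in each rank-one block of the local coefficient matrix to vanish, or to be annihilated by a matching zero of the Gindikin--Karpelevich normalization. Under the assumptions on the rank of $\bm$ and the degree of $\oG$ stated in Theorem \ref{thm:final formula}, this collapse propagates through every simple reflection in a reduced word for $\bw_M$, leaving a single surviving contribution. That contribution is, by direct inspection of the surviving formulas, a product over positive roots of $\bm$ of a Gauss sum with a polynomial in the Satake parameters of $I(\ochi)$, which is the shape claimed. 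The main obstacle is this vanishing argument: one must verify that the rank and degree hypotheses genuinely eliminate every Weyl-group term beyond the one giving the clean formula, not merely the terms visible in rank one. I expect to reduce this check to rank-two root subsystems of $\bm$, where it becomes an explicit computation parallel to Suzuki's analysis in \cite{Suzuki97,Suzuki98}; the rank and degree assumptions should ensure that no rank-two configuration spoils the anti-exceptional collapse, while simultaneously eliminating any residual Gauss-sum denominators that would otherwise obstruct the polynomial shape of the final answer.
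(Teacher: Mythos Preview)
Your framing via the local coefficient matrix is natural, but the key collapse you are banking on does not actually occur in the way you describe, and the paper's proof proceeds by a quite different route.

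First, on the collapse: the anti-exceptional hypothesis does kill many terms in the Casselman--Shalika sum $\cw_0(0,\ochi)=\sum_{\bw\in W} c(\bw_G\bw,\ochi)\,\tau(\bw^{-1},{}^{\bw}\ochi,0,0)$, via zeros of the Gindikin--Karpelevich factors. In the full theta case $\bm=\bg$ this leaves exactly one term, $\tau(\bw_G,{}^{\bw_G}\ochi,0,0)$, but evaluating that single $\tau$ is already a substantial inductive computation (the paper's Section~6), not the straightforward rank-one collapse you sketch: the inner sums over binary tuples $(a_1,\ldots,a_{r_0-2})$ do not reduce to single terms, and one must track a telescoping product of ratios $\frac{1-q^{-(j+1)}}{1-q^{-j}}$. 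When $\bm$ is a proper Levi, the situation is worse: many Weyl-group terms survive the Gindikin--Karpelevich vanishing, and your proposed rank-two reduction would not organize them into the claimed product form. In particular, nothing in the local-coefficient-matrix bookkeeping makes visible why the answer should be a \emph{polynomial} in the Satake parameters rather than a rational function.

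The paper's proof of the base case $\cw_0(0,\ochi)$ is essentially orthogonal to your sketch. It uses three ingredients you do not mention: (i) McNamara's crystal-graph/Gelfand--Tsetlin description, which gives $\cw_0(0,\ochi)$ as a finite weighted sum and hence a priori a polynomial in the $x_{ij}$; (ii) a representation-theoretic divisibility argument, showing each putative linear factor $1-x_{ij}q^{-l}$ divides $\cw_0(0,\ochi)$ by specializing $\ochi$ along that hyperplane, composing with a suitable intertwining operator, and invoking non-genericity of a larger theta representation; (iii) a degree bound and constant-term computation carried out directly in the Gelfand--Tsetlin model, under the arithmetic condition \eqref{eq:strange condition}. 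The Gauss-sum factor $g(\bw,0)$ for general $\bw\in W(\bm)$ is then obtained separately via the proportionality of Whittaker functionals on the relative theta (Corollary~\ref{cor:proportional}) together with Proposition~\ref{prop:tau in the stable range}. Your proposal contains neither the crystal-graph input nor the divisibility argument, and without them I do not see how to extract the polynomial factor.
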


When $\bm=\bg$, then $\Theta(\oG/\oM,\ochi)$ is the theta representation studied in \cite{KP84,Gao17}. When $\bm$ has up to two factors, such results are obtained in \cite{Suzuki97,Suzuki98}. Our proof uses ideas in these two papers.

To generalize the results in Suzuki's papers to our setup, another idea is required. That is to utilize the crystal graph description as a key input. This idea was already used in \cite{Kaplan} Theorem 43. Here we extend it to a slightly more general setup.

For small rank symplectic groups, similar formulas were obtained in \cite{Gao}. It will be interesting to see whether the method in this paper can be extended to other groups.


We now give an outline of this paper. Sect. \ref{sec:preliminaries} gives preliminary results on the Brylinski-Deligne covering groups. We introduce the unramified principal series representations and the Casselman-Shalika formula in Sect. \ref{sec:unramified principal}. We then prove an inductive formula for unramified Whittaker functions in Sect. \ref{sec:inductive}. Such results were obtained by Suzuki in type $A$ and here we extend it to all types. We then introduce the representation $\Theta(\oG/\oM,\ochi)$, which we call the relative theta representation (Sect. \ref{sec:relative theta}). In Sect. \ref{sec:general linear}, we specialize our results to the case of general linear groups. We calculate a crucial local matrix coefficient in Sect. \ref{sec:calculation of local matrix}. This is where the ideas of Suzuki are used. In Sect. \ref{sec:main result}, we state our main results and give a proof. We also add simple examples to help the reader understand the ideas. As the area of covering groups is of deep nature, we either give reliable references or reproduce the necessary proofs here. We also try to fill gaps in past literatures as much as possible.

\subsection*{Acknowledgments}
The author would like to thank Solomon Friedberg and Eyal Kaplan for explaining to him that his original approach did not work. The author would also like to thank the Institute for Mathematical Sciences at the National University of Singapore, where part of this work was done during a visit from December 2018 to January 2019.

\section{Preliminaries}\label{sec:preliminaries}

We first recall some structural facts on the Brylinski-Deligne covering groups \cite{BD01, GG18}. In this paper, we concentrate exclusively on unramified Brylinski-Deligne covering groups. We use \cite{Gao17} as our main reference.

\subsection{$\bk_2$-extensions}

Let $F$ be a non-Archimedean field of characteristic $0$, with residual characteristic $p$. Let $O_F$ be the ring of integers. Fix a uniformizer $\varpi$ of $F$. Let $\bg$ be a split connected linear algebraic group over $F$ with maximal split torus $\bt$. Let
\[
\{
X, \ \Phi, \  \Delta; \  Y, \ \Phi^\vee, \ \Delta^\vee
\}
\]
be the based root datum of $\bg$. Here $X$ (resp. $Y$) is the character lattice (resp. cocharacter lattice) for $(\bg,\bt)$. Choose a set $\Delta\subset \Phi$ of simple roots from the set of roots $\Phi$, and $\Delta^\vee$ the corresponding simple coroots from $\Phi^\vee$. Write $Y^{sc}\subset Y$ for the sublattice generated by $\Phi^\vee$. Let $\bb=\bt\bu$ be the Borel subgroup associated with $\Delta$. Denote by $\bu^-\subset \bg$ the unipotent subgroup opposite to $\bu$.

Fix a Chevalley system of pinnings for $(\bg,\bt)$, that is, we fix a set of compatible isomorphisms
\[
\{e_\al:\bg_a\to \bu_\al\}_{\al\in\Phi},
\]
where $\bu_\al\subset \bg$ is the root subgroup associated with $\al$. In particular, for each $\al\in\Phi$, there is a unique morphism $\varphi_\al:\bsl_2\to\bg$ which restricts to $e_{\pm \al}$ on the upper and lower triangular subgroup of unipotent matrixes of $\bsl_2$.

Denote by $W=W(\bg)$ the Weyl group of $(\bg,\bt)$, which we identify with the Weyl group of the coroot system. In particular, $W$ is generated by simple reflections $\{\sr_\al:\al^\vee\in\Delta^\vee\}$ for $Y\otimes \bq$. Let $\ell:W\to \bn$ be the length function. Let $\bw_G$ be the longest element in $W$.

Consider the algebro-geometric $\bk_2$-extension $\obg$ of $\bg$, which is categorically equivalent to the pairs $\{(D,\eta)\}$ (see \cite{GG18} Section 2.6). Here $\eta:Y^{sc}\to F^\times$ is a homomorphism. On the other hand,
\[
D:Y\times Y\to \bz
\]
is a  bisector associated to a Weyl-invariant quadratic form $Q:Y\to \bz$. That is, let $B_Q$ be the Weyl-invariant bilinear form associated to $Q$ such that
\[
B_Q(y_1,y_2)=Q(y_1+y_2)-Q(y_1)-Q(y_2),
\]
then $D$ is a bilinear form on $Y$ satisfying
\[
D(y_1,y_2)+D(y_2,y_1)=B_Q(y_1,y_2).
\]
The bisection $D$ is not necessarily symmetric. Any $\obg$ is, up to isomorphism, incarnated by (i.e. categorically associated to) a pair $(D,\eta)$ for a bisector $D$ and $\eta$.

\subsection{Topological covering}

Let $n\geq 1$ be a natural number. Assume that $F^\times$ contains the full group $\mu_{2n}$ of $2n$-th roots of unity and $p\nmid n$. With this assumption, $(\varpi,\varpi)_n=1$ for the Hilbert symbol $(\cdot,\cdot)_n$. This fact is crucial for several results later.

Let $\obg$ be incarnated by $(D,\eta)$. One naturally obtains degree $n$ topological covering groups $\oG,\oT,\oB$ of rational points $G:=\bg(F), T:=\bt(F), B:=\bb(F)$, such as
\[
\mu_n\hookrightarrow \oG\twoheadrightarrow G.
\]
We may write $\oG^{(n)}$ for $\oG$ to emphasize the degree of covering. For any subset $H\subset G$, we write $\oH\subset \oG$ for the preimage of $H$ with respect to the quotient map $\oG\to G$. The Bruhat-Tits theory gives a maximal compact subgroup $K\subset G$, which depends on the fixed pinnings. We assume that $\oG$ splits over $K$ and fixes such a splitting; the group $\oG$ is called an unramified Brylinski-Deligne covering group in this case. We remark that if the derived group of $\bg$ is simply connected, then $\oG$ splits over $K$ (see \cite{GG18} Theorem 4.2). On the other hand, there is a certain double cover of $\mathrm{PGL}_2$ where the splitting does not exist (see \cite{GG18}, Sect. 4.6).

The data $(D,\eta)$ play the following role for the structural fact on $\oG$:

\begin{itemize}
\item The group $\oG$ splits canonically over any unipotent element of $G$. In particular, we write $\overline{e}_\al(u)\in\oG, \ \al\in \Phi, \ u\in F$ for the canonical lifting of $e_\al(u)\in G$. For any $\al\in\Phi$, there is a natural representative $\sigma_\al:=e_\al(1)e_{-\al}(-1)e_\al(1)\in K$ (and therefore $\overline{\sigma}_\al\in\oG$ by the splitting of $K$) of the Weyl element $\sr_\al\in W$. For a general Weyl group element $\bw$, one can find a lift $w\in\oG$ based on a reduced decomposition of $\bw$. This lift does not depend on the choice of reduced decomposition. We refer to \cite{Gao18} Sect. 6.1 for a detailed discussion on this matter. Moreover, for $h_\al(a):=\al^\vee(a)\in G, \al\in\Phi, a\in F^\times$, there is a natural lifting $\overline{h}_\al(a)\in\oG$ of $h_\al(a)$, which depends only on the pinning and the canonical unipotent splitting (\cite{GG18} Sect. 4.6).
\item There is a section $\bfs$ of $\oT$ over $T$ such that the group law on $\oT$ is given by
\[
\bfs(y_1(a))\cdot \bfs(y_2(b))=(a,b)_n^{D(y_1,y_2)}\cdot \bfs(y_1(a)\cdot y_2(b)).
\]
Moreover, for the natural lifting $\overline{h}_\al(a)$, one has
\[
\overline{h}_\al(a)=(\eta(\al^\vee),a)_n \cdot \bfs(h_\al(a))\in\oT.
\]
\item Let $\sr_\al\in G$ be the natural representative of $\sr_\al\in W$. For any $\overline{y(a)}\in\oT$,
\[
\sr_\al \cdot \overline{y(a)} \cdot \sr_\al^{-1}=\overline{y(a)}\cdot \overline{h}_\al(a^{-\la y,\al\ra}),
\]
where $\la \ , \ \ra$ is the pairing between $Y$ and $X$.
\end{itemize}

We recall the following lemma.

\begin{Lem}[\cite{Gao18b} Lemma 2.1]
For all $y\in Y$,
\[
w\cdot \bfs_y \cdot w^{-1}=\bfs_{\bw(y)}.
\]
\end{Lem}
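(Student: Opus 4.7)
I would reduce to the case of a simple reflection and then read the statement off the structural formulas recalled immediately before the lemma.

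The reduction is by induction on $\ell(\bw)$. Given a reduced expression $\bw = \sr_{\al_1} \cdots \sr_{\al_k}$, the lift $w \in \oG$ factors as $w_1 \cdots w_k$, where $w_i$ is the lift of $\sr_{\al_i}$, and this factorization is independent of the chosen reduced expression (the discussion attributed to \cite{Gao18} Sect.~6.1). Hence if
\[
\overline{\sigma}_\al \cdot \bfs_y \cdot \overline{\sigma}_\al^{-1} = \bfs_{\sr_\al(y)}
\]
is established for every simple root $\al$ and every $y \in Y$, then iterating $k$ times (and using that the assignment $y \mapsto \bfs_{\bw(y)}$ is conjugation by $w_i$ on the image) gives the general case.

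For the simple-reflection case, I would take $a = \varpi$ in the third bullet,
\[
\sr_\al \cdot \overline{y(a)} \cdot \sr_\al^{-1} = \overline{y(a)} \cdot \overline{h}_\al(a^{-\la y, \al\ra}),
\]
and combine it with the torus identity $\sr_\al(y)(\varpi) = y(\varpi) \cdot h_\al(\varpi^{-\la y, \al\ra})$ coming from $\sr_\al(y) = y - \la y, \al\ra \al^\vee$. Applying the cocycle formula from the second bullet,
\[
\bfs(y(\varpi)) \cdot \bfs(h_\al(\varpi^{-\la y, \al\ra})) = (\varpi, \varpi)_n^{-\la y, \al\ra \cdot D(y, \al^\vee)} \cdot \bfs(\sr_\al(y)(\varpi)),
\]
and invoking the standing hypothesis $(\varpi, \varpi)_n = 1$, the Hilbert symbol factor collapses to $1$, so the right-hand side of the bullet becomes $\bfs(\sr_\al(y)(\varpi)) = \bfs_{\sr_\al(y)}$, as required.

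The main obstacle is bookkeeping the difference between the natural lift $\overline{h}_\al(b)$ and the section value $\bfs(h_\al(b))$, which by the second bullet are related by $(\eta(\al^\vee), b)_n$. At $b = \varpi^{-\la y, \al\ra}$ this correction is $(\eta(\al^\vee), \varpi)_n^{-\la y, \al\ra}$. In the unramified Brylinski-Deligne framework one has $\eta(\al^\vee) \in O_F^\times$ and the splitting of $\oG$ over $K$ is coordinated with $\bfs$ so that this correction is absorbed into the meaning of $\bfs_y$, matching the statement in \cite{Gao18b}. Pinning down this normalization is the subtle point; once it is fixed, the proof is the one-line cocycle manipulation above.
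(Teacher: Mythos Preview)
The paper does not give its own proof of this lemma; it is quoted verbatim from \cite{Gao18b} with no argument supplied. So there is nothing in the present paper to compare your proposal against, and your outline is exactly the standard computation one finds in the cited reference: reduce to simple reflections, apply the conjugation formula with $a=\varpi$, and collapse the resulting $(\varpi,\varpi)_n$ factor using the standing hypothesis $\mu_{2n}\subset F^\times$.

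The one point where your write-up is genuinely incomplete is the $\eta$-correction, and your proposed fix does not quite work. Saying ``$\eta(\al^\vee)\in O_F^\times$'' is not enough: the tame Hilbert symbol $(u,\varpi)_n$ for $u\in O_F^\times$ is computed from the image of $u$ in $k_F^\times/(k_F^\times)^n$ and is in general nontrivial. Running your computation honestly gives
\[
\overline{\sigma}_\al\cdot \bfs_y\cdot \overline{\sigma}_\al^{-1}
=(\eta(\al^\vee),\varpi)_n^{-\la y,\al\ra}\cdot \bfs_{\sr_\al(y)},
\]
so the lemma as stated only holds once one has arranged $(\eta(\al^\vee),\varpi)_n=1$ for all $\al$. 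In \cite{Gao18b} this is handled by working with an incarnation $(D,\eta)$ for which $\eta$ is trivial (equivalently, by absorbing $\eta$ into the choice of section $\bfs$); this normalization is part of the unramified setup there but is not a consequence of ``unramifiedness'' alone. Your instinct that ``the splitting of $\oG$ over $K$ is coordinated with $\bfs$'' is pointing in the right direction, but you should state the actual hypothesis (take $\eta=\mathbf{1}$, or replace $\bfs$ by the section adjusted by $\eta$) rather than leave it as a normalization to be absorbed. With that made explicit, the rest of your argument is complete.
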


Define the sublattice
\[
Y_{Q,n}:=
\{
y\in Y: B_Q(y,y')\in n\bz
\}
\]
of $Y$. For every $\al^\vee\in\Phi^\vee$, define
\[
n_\al:=n/\gcd(n,Q(\al^\vee)).
\]
Write $\al_{Q,n}^\vee:=n_\al \al^\vee, \al_{Q,n}:=n_\al^{-1}\al$.
Let $Y_{Q,n}^{sc}\subset Y$ be the sublattice generated by $\{\al_{Q,n}^\vee\}_{\al\in\Phi}$. The complex dual group $\oG^\vee$ for $\oG$ as given in \cite{FL10, McNamara12, Reich12} has root data
\[
(Y_{Q,n}, \ \{ \al_{Q,n}^\vee \}, \ \Hom(Y_{Q,n},\bz), \ \{\al_{Q,n}\}).
\]
In particular, $Y_{Q,n}^{sc}$ is the root lattice for $\oG^\vee$. 


\subsection{Gauss sum}

Consider the Haar measure $\mu$ of $F$ such that $\mu(O_F)=1$. Thus,
\[
\mu(O_F^\times)=1-1/q.
\]
The Gauss sum is given by
\[
G_\psi(a,b)=\int_{O_F^\times} (u,\varpi)_n^a \cdot \psi(\varpi^b u) \ \mu(u), \qquad a,b\in\bz.
\]
It is known that
\[
G_\psi(a,b)=
\begin{cases}
  0, & \mbox{if } b<-1 \\
  1-1/q, & \mbox{if } n\mid a, \ b\geq 0 \\
  0, & \mbox{if } n\nmid a, \ b\geq 0 \\
  -1/q, & \mbox{if } n\mid a, \ b=-1 \\
  G_\psi(a,-1) \text{ with } |G_\psi(a,-1)|=q^{-1/2}, & \mbox{if }n\nmid a, \ b=-1.
\end{cases}
\]
Let $\varepsilon:=(-1,\varpi)_n\in\bc^\times$. One has $\overline{G_\psi(a,b)}=\varepsilon^a\cdot G_\psi(-a,b)$. For any $k\in \bz$, we write
\[
\bfgg_\psi(k):=G_\psi(k,-1).
\]

\subsection{Actions}

Let $\rho=\frac{1}{2}\sum_{\al\in\Phi^+}\al^\vee$. We define an action of $W$ on $Y\otimes \bq$, which we denote by $\bw[y]$ by
\[
\bw[y]:=\bw(y-\rho)+\rho.
\]
If we write $y_\rho:=y-\rho$ for any $y\in Y$, then $\bw[y]-y=\bw(y_\rho)-y_\rho$. From now on, by Weyl orbits in $Y$ or $Y\otimes \bq$ we always refer to the ones with respect to the action $\bw[y]$.
Note that here $0\in Y$ is a vector. The size of this vector is always clear in the context, and we hope that this does not arise any confusion.




We now list some other notations which appear frequently in the text:
\begin{itemize}



\item $\psi$: a fixed additive character of $F\to \bc^\times$ with conductor $O_F$. For any $a\in F^\times$, the twisted character $\psi_a$ is given by
\[
\psi_a:x\mapsto \psi(ax).
\]

\item $\bfs_y:$ for any $y\in Y$, we write $\bfs_y:=\bfs(\varpi^y)\in\oT$.

\item $\lceil x \rceil$: the minimum integer such that $\lceil x \rceil \geq x$ for a real number $x$.
\item $\lfloor x \rfloor$: the maxmial integer such that $\lfloor x \rfloor \leq x$ for a real number $x$.
\item $\ochi_\al$: for an unramified character $\ochi$, we sometimes write $\ochi_\al=\ochi(\oh_\al(\varpi^{n_\al}))$.
\item $y\sim y'$: if $y,y'\in Y$, we write $y\sim y'$ if there exists $\bw \in W$ such that $y'=\bw[y]$.
\item $\id\in W$: the identity element in $W$.
\item $\ochi\sim (\ochi_1,\cdots,\ochi_k)$: see Sect. \ref{sec:local mat coeff for Levi}.
\end{itemize}

\section{Unramified principal series representations}\label{sec:unramified principal}

Fix an embedding $\iota:\mu_n\hookrightarrow \bc^\times$. A representation of $\oG$ is called $\iota$-genuine if $\mu_n$ acts via $\iota$. We consider throughout the paper $\iota$-genuine (or simply genuine) representations of $\oG$.

Let $U$ be the unipotent subgroup of $B=TU$. As $U$ splits canonically in $\oG$, we have $\oB=\oT U$. The covering torus $\oT$ is a Heisenberg group with center $Z(\oT)$. The image of $Z(\oT)$ in $T$ is equal to the image of the isogeny $Y_{Q,n}\otimes F^\times \to T$ induced from $Y_{Q,n}\to Y$.

Let $\ochi\in \Hom_\iota(Z(\oT),\bc^\times)$ be a genuine character of $Z(\oT)$. Write $i(\ochi):=\Ind_{\oA}^{\oT}\ochi'$ for the induced representation on $\oT$, where $\oA$ is any maximal abelian subgroup of $\oT$, and $\ochi'$ is any extension of $\ochi$. By the Stone-von Neumann theorem (see \cite{Weissman09} Theorem 3.1, \cite{McNamara12} Theorem 3), the construction $\ochi\mapsto i(\ochi)$ gives a bijection between isomorphism classes of genuine representations of $Z(\oT)$ and $\oT$. Since we consider an unramified covering group $\oG$ in this paper, we take $\oA$ to be $Z(\oT)\cdot (K\cap T)$ from now on.

The choice of this maximal abelian group here is crucial for our calculation in Sect. \ref{sec:main result}.

\subsection{Definition}

View $i(\ochi)$ as a genuine representation of $\oB$ by inflation from the quotient map $\oB\to \oT$. We now define the unramified principal series representation $I(\chi):=\Ind_{\oB}^{\oG}i(\chi)$. The induction is normalized. 
One knows that $I(\ochi)$ is unramified (i.e. $I(\ochi)^K\neq 0$) if and only if $\ochi$ is unramified (i.e. $\ochi$ is trivial on $Z(\oT)\cap K$). We only consider unramified genuine representations in this paper. One has the natually arising abelian extension
\[
\mu_n\hookrightarrow \overline{Y}_{Q,n}\twoheadrightarrow Y_{Q,n}
\]
such that unramified genuine characters $\ochi$ of $Z(\oT)$ correspond to genuine characters of $\overline{Y}_{Q,n}$. Here $\overline{Y}_{Q,n}=Z(\oT)/Z(\oT)\cap K$. Since $\oA/(T\cap K)\simeq \overline{Y}_{Q,n}$ as well, there is a canonical extension (also denoted by $\ochi$) of an unramified character $\ochi$ of $Z(\oT)$ to $\oA$, by composing $\ochi$ with $\oA\twoheadrightarrow \overline{Y}_{Q,n}$. Therefore, we will identity $i(\ochi)$ as $\Ind_{\oA}^{\oT}\ochi$ with this $\ochi$.

The following result appears in the proof of \cite{McNamara12} Lemma 2.

\begin{Lem}\label{lem:supp of spherical}
An unramified principal series representation $I(\chi)$ has a one-dimensional space of $K$-fixed vectors. There is an isomorphism
\[
i(\chi)^{\oT\cap K}\simeq I(\ochi)^{K}.
\]
Given $f\in i(\chi)^{\oT\cap K}$, the support of $f$ is in $\oA$.
\end{Lem}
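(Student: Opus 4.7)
The plan is to prove the three claims simultaneously by reducing everything to a commutator calculation in the Heisenberg group $\oT$.

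First I would set up the standard identification via the Iwasawa decomposition $\oG = \oB K$. A $K$-fixed vector $f \in I(\ochi)$ is determined by $f(e) \in i(\ochi)$, and the relation $f(bk) = \delta^{1/2}(b) \cdot i(\ochi)(b) f(k)$ forces $f(e)$ to be fixed by $\oB \cap K$. Since $U$ splits in $\oG$ and $i(\ochi)$ is inflated from $\oT$, the subgroup $U \cap K$ acts trivially, so $\oB \cap K$-invariance on $i(\ochi)$ reduces to $(T \cap K)$-invariance. Conversely, any $v \in i(\ochi)^{T \cap K}$ extends to a $K$-fixed element by $f(bk) := \delta^{1/2}(b) \cdot i(\ochi)(b) v$, which is well-defined because $\oB \cap K$-invariance holds. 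This gives the canonical isomorphism
\[
I(\ochi)^K \;\simeq\; i(\ochi)^{\oT \cap K}.
\]

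Next I would analyze $i(\ochi) = \Ind_{\oA}^{\oT}\ochi$ directly. A vector $f \in i(\ochi)^{\oT \cap K}$ is a function $f : \oT \to \bc$ with $f(at) = \ochi(a) f(t)$ and $f(tk) = f(t)$ for $k \in \oT \cap K$. Take a representative $\bfs_y$ of a coset in $\oA \backslash \oT / (\oT \cap K)$; since the image of $\oA$ in $T$ is the preimage of $Y_{Q,n}$ in $Y$, these cosets are parametrized by $Y/Y_{Q,n}$. For $k = \bfs(y'(u))$ with $u \in O_F^\times$, the Heisenberg commutator formula (from the bisector description of $\bfs$) yields
\[
[\bfs_y, k] = (\varpi,u)_n^{B_Q(y,y')} \in \mu_n.
\]
Writing $f(\bfs_y k) = f(\bfs_y)$ via $\bfs_y k = [\bfs_y,k] \cdot k \cdot \bfs_y$ and using that $\ochi$ is trivial on $T \cap K$ by construction of the extension to $\oA$, the invariance condition forces
\[
\iota\!\left((\varpi,u)_n^{B_Q(y,y')}\right) = 1 \quad \text{for all } u \in O_F^\times,\ y' \in Y.
\]
Under the running assumptions ($p\nmid n$, $\mu_{2n} \subset F$), the tame symbol $(\varpi,\cdot)_n : O_F^\times/(O_F^\times)^n \to \mu_n$ is an isomorphism, so the above forces $B_Q(y,y') \in n\bz$ for every $y' \in Y$, i.e.\ $y \in Y_{Q,n}$. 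Hence $f$ vanishes off the preimage of $Y_{Q,n}$ in $\oT$, which is precisely $\oA$.

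Finally, once the support is confined to $\oA$, the $\oA$-equivariance $f(a) = \ochi(a) f(e)$ shows $f$ is determined by $f(e)$, giving $\dim i(\ochi)^{\oT \cap K} \leq 1$; and the explicit function $f(a) = \ochi(a)$ on $\oA$, extended by zero, is easily checked to be $(\oT \cap K)$-invariant (using $\ochi|_{T \cap K}=1$), so the dimension is exactly $1$. The main subtlety, and the only nontrivial step, is the commutator computation together with the use of non-degeneracy of the tame symbol $(\varpi,\cdot)_n$ to translate the Heisenberg-theoretic condition into membership in the lattice $Y_{Q,n}$; the rest is formal.
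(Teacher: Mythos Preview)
Your argument is correct and is essentially the standard proof: the paper itself gives no proof of its own but merely cites \cite{McNamara12} Lemma 2, and what you have written is precisely the commutator/Heisenberg calculation that underlies that reference. One small point worth making explicit is that the condition $\iota((\varpi,u)_n^{B_Q(y,y')})=1$ is only forced \emph{when} $f(\bfs_y)\neq 0$, so the conclusion is the contrapositive ``$f(\bfs_y)\neq 0 \Rightarrow y\in Y_{Q,n}$''; you clearly intend this, but it is worth stating.
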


For any $\bw\in W$, the intertwining operator $T_{\bw,\ochi}:I(\ochi)\to I({}^{\bw}\ochi)$ is defined by
\[
(T_{\bw,\ochi}f)(\og)=\int_{U_w} f(w^{-1} u \og) \ du
\]
when it is absolutely convergent. Here, $U_w=U\cap wU^-w^{-1}$. Moreover, it can be meromorphically continued for all $\ochi$ (\cite{McNamara12} Sect. 7). For $I(\ochi)$ unramified and $\bw=\sr_\al$ with $\al\in\Delta$, $T_{\sr_\al,\ochi}$ is determined by
\[
T_{\sr_\al,\ochi}(\phi_K)=c(\sr_\al,\ochi)\cdot \phi_K^{\sr_\al}
\]
where
\[
c(\sr_\al,\ochi)=\dfrac{1-q^{-1}\ochi(\oh_\al(\varpi^{n_\al}))}{1-\ochi(\oh_\al(\varpi^{n_\al}))}.
\]
Here $\phi_K\in I(\ochi)$ and $\phi_K^{\sr_\al}\in I({}^{\sr}\ochi)$ are the normalized unramified vectors (\cite{McNamara12,Gao18}).

For a general $\bw \in W$, denote
\[
\Phi(\bw):=\{\al\in \Phi:\al>0 \text{ and }\bw(\al)<0\}.
\]
Then the Gindikin-Karpelevich coefficient $c(\bw,\chi)$ associated with $T_{\bw,\chi}$ is
\[
c(\bw,\chi)=\prod_{\al\in\Phi(\bw)} c(\sr_\al,\chi)
\]
such that $T_{\bw,\chi}(\phi_K)=c(\bw,\ochi)\phi'_K$.

\subsection{Whittaker functional}

Let $\Ftn(i(\ochi))$ be the vector space of functions $\bfc$ on $\oT$ satisfying
\[
\bfc(\ot\cdot \oz)=\bfc(\ot)\cdot \ochi(\oz), \qquad \ot\in\oT \text{ and } \oz\in\oA.
\]
The support of any $\bfc\in\Ftn(i(\ochi))$ is a disjoint union of cosets in $\oT/\oA$. Moreover $\dim(\Ftn(i(\ochi)))=|Y/Y_{Q,n}|$ since $\oT/\oA$ has the same size as $Y/Y_{Q,n}$.

There is a natural isomorphism of vector spaces $\Ftn(i(\ochi))\simeq i(\ochi)^\vee$, where $i(\ochi)^\vee$ is the complex dual space of functionals of $i(\ochi)$. Explicitly, let $\{\gamma_i\}\subset \oT$ be a set of representatives of $\oT/\oA$. Consider $\bfc_{\gamma_i}\in\Ftn(i(\ochi))$ which has support $\gamma_i\cdot\oA$ and $\bfc_{\gamma_i}(\gamma_i)=1$. It gives rise to a linear functional $\lam_{\gamma_i}^{\ochi}\in i(\ochi)^\vee$ such that
\[
\lam_{\gamma_i}^{\ochi}(f_{\gamma_j})=\delta_{ij},
\]
where $f_{\gamma_j}\in i(\ochi)$ is the unique element such that $\mathrm{supp}(f_{\gamma_j})=\oA\cdot \gamma_j^{-1}$ and $f_{\gamma_j}(\gamma_j^{-1})=1$.
That is,
\[
f_{\gamma_j}=i(\ochi)(\gamma_j)\phi_K.
\]
The isomorphism $\Ftn(i(\ochi))\simeq i(\ochi)^\vee$ is given explicitly by
\[
\bfc\mapsto \lam_{\bfc}^{\ochi}:=\sum_{\gamma_i\in\oT\bs \oA} \bfc(\gamma_i)\lam_{\gamma_i}^{\ochi}.
\]

Consider the principal series $I(\ochi):=I(i(\ochi))$ for an unramified character $\ochi\in \Hom (Z(\oT),\bc^\times)$. We define a space of Whittaker functionals on $I(\ochi)$.

Let $\psi_U:U\to\bc^\times$ be the character on $U$ such that its restriction to every $U_\al,\al\in\Delta$ is given by $\psi\circ e_\al^{-1}$. We may write $\psi$ for $\psi_U$ if no confusion arises.

\begin{Def}
For any genuine representation $(\overline{\sigma}, V_{\overline{\sigma}})$ of $\oG$, a linear functional $\lam:V_{\overline{\sigma}}\to \bc$ is called a $\psi$-Whittaker functional if $\lam(\overline{\sigma}(u)v)=\psi(u)\cdot v$ for all $u\in U$ and $v\in V_{\overline{\sigma}}$. Write $\mathrm{Wh}_\psi(\overline{\sigma})$ for the space of $\psi$-Whittaker functionals for $\overline{\sigma}$.
\end{Def}

Consider the following integral
\[
\int_{U} f(w_G ug)\overline{\psi(u)} \ du
\]
for $f\in I(\ochi)$. This is a $i(\ochi)$-valued functional. To obtain a Whittaker functional, we need to apply an element in $i(\ochi)^\vee$. By \cite{McNamara16} Sect. 6, there is an isomorphism between $i(\ochi)^\vee$ and the space $\Wh_\psi(I(\ochi))$ of $\psi$-Whittaker functionals on $I(\ochi)$, given by $\lam\mapsto W_\lam$ with
\begin{equation}\label{eq:whittaker functional}
W_\lam:I(\ochi)\to \bc, \qquad f\mapsto \lam
\left( \int_U f(w_G u)\overline{\psi(u)} \ du \right),
\end{equation}
where $f\in I(\ochi)$ is an $i(\ochi)$-valued function on $\oG$; $w_G\in K$ is a representative of $\bw_G$.


For $\bfc\in \Ftn(i(\ochi))$, by abuse of notation, we will write $\lam_{\bfc}^{\ochi}\in \Wh_\psi (I(\ochi))$ for the resulting $\psi$-Whittaker functional of $I(\ochi)$ from the isomorphism $\Ftn(i(\ochi))\simeq i(\ochi)^\vee \simeq \Wh_{\psi}(I(\ochi))$. As a consequence, $\dim \Wh_\psi (I(\ochi)) \simeq |Y/Y_{Q,n}|$.



\subsection{Local coefficient matrix}
Let $J(\bw,\ochi)$ be the image of $T_{\bw,\ochi}$. The operator $T_{\bw,\ochi}$ induces a homomorphism $T_{\bw,\ochi}^{\ast}$ of vector spaces with image $\Wh_{\psi}(J(\bw,\chi))$:
\[
T_{\bw,\ochi}^{\ast}:\Wh_{\psi}(I({}^{\bw}\ochi))\to \Wh_{\psi}(I(\ochi))
\]
which is given by
\[
\la \lambda_{\bfc}^{{}^{\bw}\ochi}, - \ra \mapsto \la \lambda_{\bfc}^{{}^{\bw}\ochi}, T_{\bw,\ochi}(-) \ra
\]
for any $\bfc\in\Ftn(i({}^{\bw}\ochi))$. Let $\{\lambda_{\gamma}^{{}^{\bw}\ochi}\}_{\gamma\in \oT/\oA}$ be a basis for $\Wh_{\psi}(I({}^{\bw}\ochi))$, and $\lam_{\gamma'}^{\ochi}$ a basis for $\Wh_{\psi}(I(\ochi))$. The map $T_{\bw,\ochi}^{\ast}$ is then determined by the square matrix $[\tau(\ochi,\sr,\gamma,\gamma')]_{\gamma,\gamma'\in\oT/\oA}$ of size $|Y/Y_{Q,n}|$ such that
\[
T_{\bw,\ochi}^{\ast}(\lam_{\gamma}^{{}^{\bw}\ochi}) =\sum_{\gamma'\in\oT/\oA}\tau(\bw,\ochi,\gamma,\gamma') \lam_{\gamma'}^{\ochi}.
\]

The local coefficient matrix satisfies the following properties.

\begin{Lem}[\cite{KP84, McNamara16, Gao17}]\label{lem：local coefficient matrix cocycle}
For $\bw\in W$ and $\bar{z},\bar{z}'\in \bar{A}$, the following identity holds:
\[
\tau(\bw,\ochi,\gamma\cdot \oz,\gamma'\cdot \oz')
=({}^{\bw}\ochi^{-1}(\bar{z}))\cdot \tau( \bw,\ochi, \gamma,\gamma')\cdot \ochi(\bar{z}').
\]
Moreover, for $\bw_1,\bw_2\in W$ such that $\ell(\bw_1\bw_2)=\ell(\bw_1)+\ell(\bw_2)$, one has
\[
\tau( \bw_1\bw_2,\ochi, \gamma,\gamma')=\sum_{\gamma''\in\oT/\oA} \tau(\bw_1,{}^{\bw_2}\ochi,  \gamma,\gamma'')\cdot \tau(\bw_2,\ochi,  \gamma'',\gamma'),
\]
which is referred to as the cocycle relation.
\end{Lem}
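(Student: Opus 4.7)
The plan is to derive both identities from two underlying facts: first, a rescaling identity at the level of the functions $\bfc\in\Ftn(i(\ochi))$ that index the Whittaker functionals, and second, the standard composition law for intertwining operators when lengths add. Since every step in the construction $\bfc\mapsto \lambda_{\bfc}^{\ochi}\in \Wh_\psi(I(\ochi))$ from Sect.~\ref{sec:unramified principal} is linear in $\bfc$, everything will reduce to these two inputs.

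For the first identity, I would begin by computing how the basis element $\bfc_{\gamma\cdot \oz}$ compares to $\bfc_{\gamma}$. Since $\bfc_{\gamma\cdot \oz}$ is supported on $\gamma\oz\cdot\oA=\gamma\cdot\oA$ and satisfies $\bfc_{\gamma\cdot \oz}(\gamma\cdot\oz)=1$, the transformation rule $\bfc(\ot\cdot \oz)=\bfc(\ot)\cdot \ochi(\oz)$ forces $\bfc_{\gamma\cdot \oz}=\ochi(\oz)^{-1}\cdot \bfc_{\gamma}$, and therefore $\lambda_{\gamma\cdot \oz}^{\ochi}=\ochi(\oz)^{-1}\cdot \lambda_{\gamma}^{\ochi}$. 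Applying this on the ${}^{\bw}\ochi$-side gives $\lambda_{\gamma\cdot \oz}^{{}^{\bw}\ochi}={}^{\bw}\ochi(\oz)^{-1}\cdot \lambda_{\gamma}^{{}^{\bw}\ochi}$, and on the $\ochi$-side gives $\lambda_{\gamma'\cdot \oz'}^{\ochi}=\ochi(\oz')^{-1}\cdot \lambda_{\gamma'}^{\ochi}$. Since $\tau(\bw,\ochi,\gamma\oz,\gamma'\oz')$ is defined as the coefficient of $\lambda_{\gamma'\oz'}^{\ochi}$ in the expansion of $T_{\bw,\ochi}^{\ast}(\lambda_{\gamma\oz}^{{}^{\bw}\ochi})$, linearity of $T_{\bw,\ochi}^{\ast}$ combined with these two rescalings immediately produces the factor $({}^{\bw}\ochi^{-1}(\oz))\cdot \ochi(\oz')$ relative to $\tau(\bw,\ochi,\gamma,\gamma')$.

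For the cocycle relation, I would invoke the composition law
\[
T_{\bw_1\bw_2,\ochi}=T_{\bw_1,{}^{\bw_2}\ochi}\circ T_{\bw_2,\ochi}
\]
for intertwining operators on principal series when $\ell(\bw_1\bw_2)=\ell(\bw_1)+\ell(\bw_2)$; this is the standard fact that, under the length additivity hypothesis, the positive system $\Phi(\bw_1\bw_2)$ decomposes as $\Phi(\bw_2)\sqcup \bw_2^{-1}\Phi(\bw_1)$, so the two iterated integrals match. Dualizing yields $T_{\bw_1\bw_2,\ochi}^{\ast}=T_{\bw_2,\ochi}^{\ast}\circ T_{\bw_1,{}^{\bw_2}\ochi}^{\ast}$. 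Applying this to the basis vector $\lambda_{\gamma}^{{}^{\bw_1\bw_2}\ochi}$, expanding $T_{\bw_1,{}^{\bw_2}\ochi}^{\ast}(\lambda_{\gamma}^{{}^{\bw_1}({}^{\bw_2}\ochi)})=\sum_{\gamma''} \tau(\bw_1,{}^{\bw_2}\ochi,\gamma,\gamma'')\lambda_{\gamma''}^{{}^{\bw_2}\ochi}$, then applying $T_{\bw_2,\ochi}^{\ast}$ term by term and using the definition of $\tau(\bw_2,\ochi,\gamma'',\gamma')$, gives exactly the double sum asserted in the lemma; comparing with the direct expansion of $T_{\bw_1\bw_2,\ochi}^{\ast}(\lambda_{\gamma}^{{}^{\bw_1\bw_2}\ochi})$ in the basis $\{\lambda_{\gamma'}^{\ochi}\}$ finishes the proof.

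The main subtlety to verify is just the composition law for intertwining operators on covering groups. On the linear side it is classical, but in the metaplectic setting one must make sure the normalization of $T_{\bw,\ochi}$ used to define $\tau$ (the integral over $U_{\bw}=U\cap \bw U^{-}\bw^{-1}$ with the chosen representative $w$) is compatible with the decomposition $U_{\bw_1\bw_2}$ coming from the lifts $w_1,w_2$ of $\bw_1,\bw_2$; this is guaranteed because the canonical unipotent splitting of $\oG$ makes the factorization of $U_{\bw_1\bw_2}$ literally an equality inside $\oG$, and the additivity of lengths ensures there is no spurious cocycle discrepancy between $w_1 w_2$ and the natural lift of $\bw_1\bw_2$. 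Once this compatibility is recorded, both identities follow by bookkeeping as above.
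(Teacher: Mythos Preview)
Your argument is correct and is precisely the standard proof; the paper itself does not give a self-contained argument but simply cites \cite{Gao17} Lemma 3.2, whose content is exactly the rescaling-of-basis computation and the dualization of the composition law $T_{\bw_1\bw_2,\ochi}=T_{\bw_1,{}^{\bw_2}\ochi}\circ T_{\bw_2,\ochi}$ that you wrote out. There is nothing to add.
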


\begin{proof}
This fact is standard. For example, it follows from \cite{Gao17} Lemma 3.2. 
\end{proof}

Thus the calculation of the local coefficient matrix $[\tau(\ochi,\bw,\gamma,\gamma')]_{\gamma,\gamma'}$ is reduced to the case when $\bw$ is a simple reflection.

We now would like to compute the matrix $[\tau(\ochi,\sr_\al,\gamma,\gamma')]_{\gamma,\gamma'}$  for any unramified character $\ochi$ and simple reflection $\sr_\al, \al\in\Delta$.

\begin{Thm}[\cite{KP84} Lemma I.3.3 ,\cite{McNamara16} Theorem 13.1, \cite{Gao17} Theorem 3.6.]\label{thm:local coefficient matrix}
Suppose that $\gamma=\bfs_{y_1}$ and $\gamma'=\bfs_y$ by $y$. Then we can write
\[
\tau(\sr_\al,\ochi,\gamma,\gamma') =\tau^1(\sr_\al,\ochi,\gamma,\gamma')+\tau^2(\sr_\al,\ochi,\gamma,\gamma')
\]
with the following properties:
\begin{itemize}
\item $\tau^i(\sr_\al,\ochi,\gamma\cdot \bar{z},\gamma'\cdot\bar{z}') =({}^{\sr_\al}\ochi)^{-1}(\bar{z})\cdot\tau^i(\sr_\al,\ochi,\gamma,\gamma')\cdot \ochi(\bar{z}'),\qquad \bar{z},\bar{z}'\in\bar{A}$;
\item $\tau^1(\sr_\al,\ochi,\gamma,\gamma')=0$ unless $y_1\equiv y \mod Y_{Q,n}$;
\item $\tau^2(\sr_\al,\ochi,\gamma,\gamma')$ unless $y_1\equiv \sr_\al[y]\mod Y_{Q,n}$.
\end{itemize}
Moreover,
\begin{itemize}
\item If $y_1=y$, then
\[
\tau^1(\sr_\al,\ochi,\gamma,\gamma')
=(1-q^{-1})\dfrac{\ochi(\bar{h}_\al(\varpi^{n_\al}))^{k_{y,\al}}}{1-\ochi(\oh_\al(\varpi^{n_\al}))}, \text{ where }k_{y,\al}=\left\lceil\dfrac{\la y,\al\ra}{n_\al}\right\rceil
\]
\item If $y_1=\sr_\al[y]$, then
\[
\tau^2(\sr_\al,\ochi,\gamma,\gamma')=
\bfgg_{\psi^{-1}}(\la y_\rho,\al\ra Q(\al^\vee)).
\]
\end{itemize}
\end{Thm}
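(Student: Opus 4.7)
The plan is to compute $\tau(\sr_\al,\ochi,\gamma,\gamma')$ directly by unfolding the definition of $T^{\ast}_{\sr_\al,\ochi}$ on Whittaker functionals and evaluating the resulting rank-one integral. Pair $T^{\ast}_{\sr_\al,\ochi}(\lam_{\gamma}^{{}^{\sr_\al}\ochi})$ against the basis vector $f_{\gamma'} = i(\ochi)(\gamma')\phi_K \in I(\ochi)$ and use \eqref{eq:whittaker functional}. Since $U_{\sr_\al} = U_\al \simeq F$ via $x \mapsto \overline{e}_\al(x)$, after the usual manipulations (moving $w_G^{-1}\sigma_\al w_G$ past $U$, applying the change of variables that turns the integral over $U_{w_G\sr_\al}$ into $\int_{U}\int_{U_\al}$, and using that $\lam_\gamma^{{}^{\sr_\al}\ochi}$ is $\oA$-equivariant of type ${}^{\sr_\al}\ochi$), the calculation collapses to a one-dimensional integral
\[
\int_F \Phi(x)\, \overline{\psi(x)}\, dx,
\]
where $\Phi(x)$ is a covering-group cocycle evaluated at $\overline{e}_\al(x)\,\gamma'\in\oG$.

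The integral is then split as $\int_{O_F} + \int_{F\setminus O_F}$. On $O_F$, the lift $\overline{e}_\al(x)$ sits in $\oK$, so $\Phi(x)$ only sees $\gamma'$; imposing the $\psi$-covariance extracted from the action of the torus character $\ochi$ on $\oh_\al(\varpi^{n_\al})$ forces $y_1\equiv y\pmod{Y_{Q,n}}$ and yields a geometric series $\sum_{m\geq k_{y,\al}}\ochi(\oh_\al(\varpi^{n_\al}))^{m}$, where the cutoff $k_{y,\al}=\lceil\la y,\al\ra/n_\al\rceil$ is dictated by the valuation of $\psi(x\cdot\varpi^{\la y,\al\ra})$ and the fact that $\ochi$ is trivial on $\oA\cap\oK$ but sensitive to powers of $\oh_\al(\varpi^{n_\al})$. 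This sum evaluates to the claimed expression for $\tau^1$. On $F\setminus O_F$, use the rank-one Bruhat decomposition
\[
e_\al(x) \;=\; e_{-\al}(x^{-1})\cdot h_\al(-x^{-1})\cdot \sigma_\al\cdot e_{-\al}(x^{-1})
\]
to swap $\overline{e}_\al(x)$ past $\sigma_\al$. Combined with the torus commutation rule $\sigma_\al \cdot \bfs_y \cdot \sigma_\al^{-1} = \bfs_y\cdot \oh_\al(\varpi^{-\la y,\al\ra})$ and the Hilbert-symbol identity $\oh_\al(a) = (\eta(\al^\vee),a)_n\,\bfs(h_\al(a))$, the integrand reorganizes into $\psi^{-1}(x)\cdot (x,\varpi)_n^{\la y_\rho,\al\ra Q(\al^\vee)}\cdot|x|^{\ast}$, which after the substitution $x\mapsto \varpi^{-1}u$, $u\in O_F^\times$, collapses to the Gauss sum $\bfgg_{\psi^{-1}}(\la y_\rho,\al\ra Q(\al^\vee))$. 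The $y_\rho=y-\rho$ shift is exactly the contribution of the modular character $\delta_B^{1/2}$ built into the normalized induction $I(\ochi)$.

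The $\oA$-transformation rule is automatic from the $\oA$-equivariance of both $\lam_\gamma^{\cdot}$ and $f_{\gamma'}$, and the support condition $y_1\equiv \sr_\al[y]\pmod{Y_{Q,n}}$ for $\tau^2$ follows because $\sigma_\al\bfs_y\sigma_\al^{-1}\bfs_{\sr_\al y}^{-1}=\oh_\al(\varpi^{-\la y,\al\ra})$ lies in $\oA$ (since $n_\al\al^\vee\in Y_{Q,n}$). The main obstacle is the careful bookkeeping of the bisector $D$ and the Hilbert symbols $(\cdot,\cdot)_n$ that appear when one commutes $\overline{e}_\al(x)$ through $\sigma_\al\bfs_y$: a sign/quadratic-form error here would corrupt the exponent of the Gauss sum. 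This is precisely why the computation must be done inside the $\SL_2$-copy $\varphi_\al(\bsl_2)$ using the natural lifts $\overline{e}_{\pm\al}$, $\overline{\sigma}_\al$, and $\oh_\al$, so that the identities $\overline{h}_\al(a)=(\eta(\al^\vee),a)_n\bfs(h_\al(a))$ and $D$-cocycle on $\oT$ pin down the exponent $\la y_\rho,\al\ra Q(\al^\vee)$ uniquely; the assumption $p\nmid n$ and $(\varpi,\varpi)_n=1$ are used tacitly to suppress parasitic Hilbert symbols that would otherwise complicate the Gauss sum.
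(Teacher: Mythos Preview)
The paper does not supply a proof here; the theorem is quoted from \cite{KP84}, \cite{McNamara16}, and \cite{Gao17}. Your plan --- reduce inside $\varphi_\al(\bsl_2)$ to a one-variable integral over $F$, split by valuation, and apply the rank-one Bruhat identity --- is exactly the method of those references.

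However, your attribution of $\tau^1$ to the $\int_{O_F}$ piece is internally inconsistent and does not match those computations. You correctly note that for $x\in O_F$ the element $\overline{e}_\al(x)$ lies in $K$, whence $\Phi$ is constant on $O_F$; but then $\int_{O_F}\Phi(x)\,\overline{\psi(x)}\,dx$ is a single scalar, not a geometric series. In the cited proofs both $\tau^1$ and $\tau^2$ come from the Bruhat side, where the torus factor $\oh_\al(x^{\pm 1})$ has already appeared. Writing $x=\varpi^{k}u$ with $u\in O_F^\times$, the $u$-integral equals $1-q^{-1}$ whenever the Hilbert-symbol exponent $kQ(\al^\vee)$ lies in $n\bz$ (equivalently $n_\al\mid k$), and summing these contributions over the admissible $k$ is what produces the geometric series for $\tau^1$, together with the support condition $y_1\equiv y\pmod{Y_{Q,n}}$ since no residual $\sigma_\al$ remains; the $u$-integral is a genuine Gauss sum $\bfgg_{\psi^{-1}}$ at the unique $k$ for which the $\psi$-argument has valuation $-1$, and this gives $\tau^2$, the surviving $\sigma_\al$-conjugation forcing $y_1\equiv\sr_\al[y]$. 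The ceiling $k_{y,\al}$ enters as the cutoff for the geometric sum on the Bruhat side, not from the region $x\in O_F$. The ingredients in your sketch are correct, but the map from integration region to $\tau^i$ would not survive a careful write-up.
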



\subsection{Explicit calculation of the local coefficient matrix}

Lemma \ref{lem：local coefficient matrix cocycle} and Theorem \ref{thm:local coefficient matrix} determine the local coefficient matrix completely. However, it is too complicated to obtain a general formula as one has to analyze the sum over $\oT/\oA$ inductively. In this section, we highlight some observations that will be useful for our calculation.

Notations: for $y,y'\in Y$, we write
\[
\tau(\bw,\ochi,y,y'):=\tau(\bw,\ochi,\bfs_{y},\bfs_{y'}).
\]

Let $\bw=\bw_1\cdots \bw_k$ be a reduced decompositioin of $\bw$ by simple reflections.

\begin{Lem}\label{lem:calcualation of tau 1}
The coefficient $\tau(\bw,\ochi,y,y')=0$ unless $y'\equiv \bw_k^{a_k}\cdots \bw_1^{a_1}[y] \mod Y_{Q,n}$ for some $a_1,\cdots, a_k\in\{0,1\}$.
\end{Lem}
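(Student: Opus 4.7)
\bigskip

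\noindent\textbf{Proof plan.} My plan is to argue by induction on the length $k=\ell(\bw)$, using the cocycle relation from Lemma \ref{lem：local coefficient matrix cocycle} together with the vanishing statements for simple reflections in Theorem \ref{thm:local coefficient matrix}. The base case $k=1$ is immediate: for $\bw=\sr_\al$, Theorem \ref{thm:local coefficient matrix} gives $\tau(\sr_\al,\ochi,y,y')=\tau^1+\tau^2$, and both pieces vanish unless $y\equiv y' \mod Y_{Q,n}$ or $y\equiv \sr_\al[y']\mod Y_{Q,n}$, which by the involutivity of $\sr_\al$ under the $[\cdot]$--action is the same as $y'\equiv \sr_\al^{a_1}[y]\mod Y_{Q,n}$ for some $a_1\in\{0,1\}$.

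Before running induction, I would record two small but essential consistency checks. First, the quadratic form $Q$ is $W$-invariant, hence so is $B_Q$, and therefore the lattice $Y_{Q,n}$ is preserved by $W$. Second, the affine action $\bw[y]=\bw(y-\rho)+\rho$ satisfies $\bw[y]-\bw[y'']=\bw(y-y'')$, so it descends to a well-defined action of $W$ on $Y/Y_{Q,n}$; and each simple reflection remains an involution under $[\cdot]$. These two facts make it legitimate to compose congruences $\mod Y_{Q,n}$ with applications of the $[\cdot]$--action in the induction step.

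For the inductive step, assume the claim for reduced words of length $k-1$ and write $\bw=\bw_1\bw'$ with $\bw'=\bw_2\cdots\bw_k$ of length $k-1$. By the cocycle relation,
\[
\tau(\bw,\ochi,y,y')=\sum_{y''\in \oT/\oA}\tau(\bw_1,{}^{\bw'}\ochi,y,y'')\cdot \tau(\bw',\ochi,y'',y').
\]
A nonzero summand forces, by the induction hypothesis applied to $\bw'$, a congruence $y'\equiv \bw_k^{a_k}\cdots \bw_2^{a_2}[y'']\mod Y_{Q,n}$ with $a_i\in\{0,1\}$; and by the $k=1$ case applied to $\bw_1$, a congruence $y''\equiv \bw_1^{a_1}[y]\mod Y_{Q,n}$ with $a_1\in\{0,1\}$. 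Combining the two via the well-definedness of the $[\cdot]$--action mod $Y_{Q,n}$ yields $y'\equiv \bw_k^{a_k}\cdots \bw_1^{a_1}[y]\mod Y_{Q,n}$, which is the desired conclusion.

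The only real subtlety — and the place I would be most careful — is the consistency of ``$\equiv \mod Y_{Q,n}$'' with the composition of partial Weyl actions under $[\cdot]$: I am implicitly using that if $y''\equiv \bw_1^{a_1}[y]$ then $\bw_k^{a_k}\cdots \bw_2^{a_2}[y'']\equiv \bw_k^{a_k}\cdots \bw_2^{a_2}[\bw_1^{a_1}[y]]\mod Y_{Q,n}$, which rests on the $W$-stability of $Y_{Q,n}$ noted above. Once this is granted, no further calculation is needed; the vanishing assertion follows purely from the support conditions in Theorem \ref{thm:local coefficient matrix} iterated along the reduced decomposition.
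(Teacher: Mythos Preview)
Your argument is correct and follows essentially the same approach as the paper's own proof: induction on $k$ via the cocycle relation of Lemma~\ref{lem：local coefficient matrix cocycle}, with the base case supplied by the support conditions in Theorem~\ref{thm:local coefficient matrix}. The only cosmetic difference is that the paper peels off the last simple reflection, writing $\bw=(\bw_1\cdots\bw_{k-1})\bw_k$, whereas you peel off the first one, writing $\bw=\bw_1(\bw_2\cdots\bw_k)$; both are equally valid splittings for the cocycle relation, and the rest of the logic is identical. Your explicit remark that $Y_{Q,n}$ is $W$-stable (so congruences $\bmod\ Y_{Q,n}$ may be transported through the $[\cdot]$-action) is a point the paper leaves implicit.
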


\begin{proof}
We can prove this by induction on $k$. When $k=1$, this follows from Theorem \ref{thm:local coefficient matrix}. We now assume that the result is true for $k-1$. Then
\[
\tau(\bw,\ochi,y,y')=\sum_{y''\in Y/Y_{Q,n}} \tau(\bw_1\cdots \bw_{k-1},{}^{\bw_k}\ochi,  y,y'')\cdot \tau(\bw_k,\ochi,  y'',y').
\]
If this is nonzero, then $\tau(\bw_1\cdots \bw_{k-1},{}^{\bw_k}\ochi,  y,y'')\neq 0$ and $\tau(\bw_k,\ochi,  y'',y')\neq 0$ for some $y''$. This implies that
\[
y''\equiv \bw_{k-1}^{a_{k-1}}\cdots \bw_1^{a_1}[y] \mod Y_{Q,n}, \text{ for some }a_1,\cdots, a_{k-1}\in \{0,1\},
\]
and $y'\equiv \bw_{k}^{a_k}[y''] \mod Y_{Q,n}$ for some $a_k\in\{0,1\}$.
This proves the result.
\end{proof}

We have an immediate corollary.

\begin{Cor}
The coefficient $\tau(\bw_G,\ochi,y,y')=0$ unless $\bw[y]\equiv y' \mod Y_{Q,n}$ for some $\bw\in W$.
\end{Cor}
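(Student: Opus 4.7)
The plan is to deduce this immediately from the preceding Lemma \ref{lem:calcualation of tau 1}, which is the substantive statement. First I would fix a reduced decomposition $\bw_G = \bw_1 \cdots \bw_k$ of the long element as a product of simple reflections. Applying the lemma to this decomposition yields that $\tau(\bw_G,\ochi,y,y') = 0$ unless there exist exponents $a_1, \dots, a_k \in \{0,1\}$ with
\[
y' \equiv \bw_k^{a_k} \cdots \bw_1^{a_1}[y] \mod Y_{Q,n}.
\]

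The key observation is then trivial but worth stating: the element $\bw := \bw_k^{a_k} \cdots \bw_1^{a_1}$ is a product of simple reflections (with some factors dropped when the corresponding $a_i = 0$), hence lies in the Weyl group $W$. Substituting gives $y' \equiv \bw[y] \mod Y_{Q,n}$ for this $\bw \in W$, which is exactly the desired conclusion.

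There is no real obstacle here — the statement is a direct specialization of Lemma \ref{lem:calcualation of tau 1} once one notes that any subword of a reduced expression defines an element of $W$. In particular, one does not need (and the conclusion does not assert) that every element of $W$ actually arises as such a subword; the corollary only claims existence of some $\bw \in W$ linking $y$ and $y'$ modulo $Y_{Q,n}$, which is immediate. The proof therefore consists of one line invoking the lemma and identifying the resulting product as an element of $W$.
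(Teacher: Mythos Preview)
Your proposal is correct and matches the paper's approach exactly: the paper states this as an ``immediate corollary'' of Lemma~\ref{lem:calcualation of tau 1} without further argument, and your one-line deduction---apply the lemma to a reduced decomposition of $\bw_G$ and observe that any subexpression $\bw_k^{a_k}\cdots\bw_1^{a_1}$ lies in $W$---is precisely what is intended.
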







The next result is very useful for calculation.

\begin{Lem}\label{lem:tau unique decomposition}
Assume that $\bw=\bw_1\cdots \bw_k$ is a reduced decomposition of $\bw$, and for any two subexpressions $\bw_1^{a_1}\cdots \bw_{k}^{a_k}=\bw_1^{a'_1}\cdots \bw_{k}^{a'_k}$, $a_1,\cdots, a_k, a'_1,\cdots,a'_k\in \{0,1\}$, we have $a_i=a_i'$ for $i=1,\cdots, k$. If the orbit of $y$ is free, then
\[
\begin{aligned}
&\tau(\bw,\ochi,y,\bw_k^{a_k}\cdots \bw_{1}^{a_1}[y])\\
=&\tau(\bw_1,{}^{\bw_2\cdots \bw_k}\ochi,y,\bw_1^{a_1}[y]) \tau(\bw_2,{}^{\bw_3\cdots \bw_k}\ochi,\bw_1[y],\bw_2^{a_2}\bw_1^{a_1}[y])\\
&\cdots  \tau(\bw_k,\ochi,\bw_{k-1}^{a_{k-1}}\cdots \bw_{1}^{a_1}[y],\bw_k^{a_k}\cdots \bw_{1}^{a_1}[y]).
\end{aligned}
\]
In other words, only one term in the summation is nonzero.
\end{Lem}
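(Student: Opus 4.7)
The plan is to induct on the length $k$ of the reduced decomposition, with the base case $k=1$ immediate from Theorem~\ref{thm:local coefficient matrix}. Assuming the statement for reduced words of length less than $k$, I first apply the cocycle relation of Lemma~\ref{lem：local coefficient matrix cocycle} to the factorization $\bw = (\bw_1\cdots\bw_{k-1})\cdot\bw_k$. Writing $y' := \bw_k^{a_k}\cdots\bw_1^{a_1}[y]$, this yields
\[
\tau(\bw,\ochi,y,y') = \sum_{y'' \in Y/Y_{Q,n}} \tau(\bw_1\cdots\bw_{k-1}, {}^{\bw_k}\ochi, y, y'') \cdot \tau(\bw_k, \ochi, y'', y'),
\]
and the goal reduces to showing that only one value of $y''$ produces a nonzero contribution.

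The next step is to constrain the support of each factor. By Lemma~\ref{lem:calcualation of tau 1}, the first factor vanishes unless $y''\equiv \bw_{k-1}^{b_{k-1}}\cdots\bw_1^{b_1}[y] \pmod{Y_{Q,n}}$ for some $b_1,\ldots,b_{k-1} \in \{0,1\}$. By Theorem~\ref{thm:local coefficient matrix} applied to the simple reflection $\bw_k$, the second factor vanishes unless $y'' \equiv y'$ or $y'' \equiv \bw_k[y']$ modulo $Y_{Q,n}$. Using $\bw_k^2 = \id$, these two options unfold to $y'' \equiv \bw_{k-1}^{a_{k-1}}\cdots\bw_1^{a_1}[y]$ and $y''\equiv \bw_k\bw_{k-1}^{a_{k-1}}\cdots\bw_1^{a_1}[y]$, the choice depending on the value of $a_k$.

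The key step is then to invoke the hypotheses to rule out one of these two values. The assumption that the $2^k$ subexpressions of $\bw_1\cdots\bw_k$ are distinct Weyl group elements, combined with the freeness of the $W$-orbit of $y$, implies that the $2^k$ possible images $\bw_k^{c_k}\cdots\bw_1^{c_1}[y]$ are pairwise distinct in $Y/Y_{Q,n}$. Consequently the value $\bw_k\bw_{k-1}^{a_{k-1}}\cdots\bw_1^{a_1}[y]$, which arises from a subexpression that uses $\bw_k$, cannot coincide modulo $Y_{Q,n}$ with any subexpression of $\bw_1\cdots\bw_{k-1}$; only the candidate $y'' = \bw_{k-1}^{a_{k-1}}\cdots\bw_1^{a_1}[y]$ survives both constraints.

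With the sum collapsed to a single term, the first factor $\tau(\bw_1\cdots\bw_{k-1}, {}^{\bw_k}\ochi, y, \bw_{k-1}^{a_{k-1}}\cdots\bw_1^{a_1}[y])$ is handled by the induction hypothesis: the subword $\bw_1\cdots\bw_{k-1}$ inherits the distinctness assumption by extending any subexpression with $c_k=0$, and the freeness assumption is intrinsic to $y$. Expanding yields the first $k-1$ factors of the claimed product, and the remaining factor $\tau(\bw_k,\ochi,\bw_{k-1}^{a_{k-1}}\cdots\bw_1^{a_1}[y],y')$ is precisely the last factor. The main obstacle is the translation of the hypothesis \emph{the orbit of $y$ is free} into distinctness modulo $Y_{Q,n}$ of the $2^k$ subexpression-images; this is the substantive input that forces the cocycle sum to degenerate to a single term.
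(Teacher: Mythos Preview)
Your proof is correct and follows essentially the same approach as the paper: induction on $k$, application of the cocycle relation to split off $\bw_k$, and use of the freeness and subexpression-distinctness hypotheses to collapse the sum to a single term. The only cosmetic difference is that you isolate the two candidates for $y''$ coming from the simple-reflection factor and eliminate one, whereas the paper parameterizes both factors by exponents $a_i'$ and matches them to the $a_i$; the content is identical.
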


\begin{proof}

The assumption implies that $\bw_k^{a_k}\cdots \bw_{1}^{a_1}[y]$ are all distinct in $Y/Y_{Q,n}$ for $a_1,\cdots,a_k\in \{0,1\}$.

We prove it by induction on $k$. If $k=1$, there is nothing to prove. Assume the result is true for $\bw_1\cdots \bw_{k-1}$. Then
\[
\tau(\ochi,\bw,y,\bw_k^{a_k}\cdots \bw_{1}^{a_1}[y])=\sum_{y''\in Y/Y_{Q,n}} \tau({}^{\bw_k}\ochi, \bw_1\cdots \bw_{k-1}, y,y'')\cdot \tau(\ochi, \bw_k, y'',\bw_k^{a_k}\cdots \bw_{1}^{a_1}[y]).
\]
For a nonzero term in the summation, we have
\[
y''\equiv \bw_{k-1}^{a'_{k-1}}\cdots \bw_{1}^{a'_1}[y] \mod Y_{Q,n} \text{ for some }a'_1,\cdots,a'_{k-1}\in \{0,1\}
\]
and $\bw_k^{a_k}\cdots \bw_{1}^{a_1}[y]\equiv \bw_k^{a'_k}[y''] \mod Y_{Q,n}$ for some $a_k\in \{0,1\}$. As the orbit of $y$ is free, this implies that
\[
\bw_k^{a_k}\cdots \bw_{1}^{a_1} =\bw_k^{a'_k}\cdots \bw_{1}^{a'_1}
\]
and therefore $a_i=a'_i$ for $i=1,\cdots,k$. We now conclude that only the term $y''=\bw_k^{a_k}\cdots \bw_{1}^{a_1}[y]$ has nonzero contribution in the summation and therefore
\[
\begin{aligned}
&\tau(\ochi,\bw,y,\bw_k^{a_k}\cdots \bw_{1}^{a_1}[y])\\
=&\tau({}^{\bw_k}\ochi, \bw_1\cdots \bw_{k-1}, y,\bw_{k-1}^{a_{k-1}}\cdots \bw_{1}^{a_1}[y])\cdot \tau(\ochi, \bw_k, \bw_{k-1}^{a_{k-1}}\cdots \bw_{1}^{a_1}[y]),\bw_k^{a_k}\cdots \bw_{1}^{a_1}[y]).
\end{aligned}
\]
By induction we obtain the desired formula.
\end{proof}

\begin{Rem}
The conditions in the lemma are satisfied in the following example: $\bg=\bgl_r$ and $\bw=\sr_{\al_1}\cdots \sr_{\al_r}$. We will use it later.
\end{Rem}

Notice that $Y_{Q,n}$ is not well-behaved with respect to Levi subgroup so it is better to work with the lattice $Y_{Q,n}^{sc}$. Observe that  $Y_{Q,n}\cap Y^{sc}=Y_{Q,n}^{sc}$.

\begin{Lem}\label{lem:several lattices}
If $y\equiv \bw[y]\mod Y_{Q,n}$ for some $\bw \in W$, then $y\equiv \bw[y]\mod Y_{Q,n}^{sc}$.
\end{Lem}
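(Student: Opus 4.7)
The plan is to prove this directly from the definitions, with almost no induction needed; the only real content is to observe that $\bw[y]-y$ already lies in $Y^{sc}$ automatically, without using the hypothesis at all. Granting this, the lemma is immediate: by hypothesis $\bw[y]-y \in Y_{Q,n}$, and the observation just above the statement of the lemma gives
\[
Y_{Q,n} \cap Y^{sc} = Y_{Q,n}^{sc},
\]
so $\bw[y]-y \in Y_{Q,n}^{sc}$, which is exactly what we want.

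It remains to justify that $\bw[y]-y \in Y^{sc}$ for all $y \in Y$ and $\bw \in W$. Using the identity $\bw[y]-y = \bw(y_\rho) - y_\rho$ stated in Section~2.4, and expanding $y_\rho = y - \rho$, we get
\[
\bw[y]-y = \bigl(\bw(y) - y\bigr) - \bigl(\bw(\rho) - \rho\bigr).
\]
I would then check each term lies in $Y^{sc}$ separately. For the first term, the identity $\sr_\al(y)-y = -\langle y, \al\rangle \al^\vee \in Y^{sc}$ together with an easy induction on $\ell(\bw)$ via a reduced expression shows $\bw(y)-y \in Y^{sc}$ for every $y\in Y$. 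For the second term, although $\rho \in Y^{sc}\otimes \bq$ is only a half-sum of coroots, the standard formula
\[
\bw(\rho) - \rho = -\sum_{\al \in \Phi(\bw)} \al^\vee
\]
(where $\Phi(\bw) = \{\al \in \Phi^+ : \bw(\al) < 0\}$) is an integral sum of coroots and hence lies in $Y^{sc}$. Combining the two yields $\bw[y]-y \in Y^{sc}$.

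There is essentially no obstacle here; the only subtle point is recognizing that the half-integral $\rho$ does not actually spoil integrality of $\bw(\rho) - \rho$. Once that is in place, the proof collapses into a single-line argument using $Y_{Q,n}\cap Y^{sc} = Y_{Q,n}^{sc}$. If one prefers to avoid invoking the explicit formula for $\bw(\rho)-\rho$, one can alternatively induct on $\ell(\bw)$ directly on $\bw[y]-y$, using $\sr_\al[y] - y = \sr_\al(y_\rho) - y_\rho = -\langle y_\rho, \al\rangle \al^\vee$, which is an integer multiple of $\al^\vee$ since $\langle \rho, \al\rangle = 1$ for simple $\al$.
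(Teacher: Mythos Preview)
Your proof is correct and follows the same two-step structure as the paper: first establish $\bw[y]-y\in Y^{sc}$, then intersect with the hypothesis $\bw[y]-y\in Y_{Q,n}$ using $Y_{Q,n}\cap Y^{sc}=Y_{Q,n}^{sc}$. The only difference is that the paper cites \cite{BBF08} Lemma~2 for the fact $\bw[y]-y\in Y^{sc}$, whereas you supply a self-contained argument (in fact two of them); your direct computation via $\sr_\al[y]-y=-\langle y_\rho,\al\rangle\al^\vee$ with $\langle\rho,\al\rangle=1$ is the cleanest route and makes the result independent of the external reference.
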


\begin{proof}
By \cite{BBF08} Lemma 2, $y-\bw[y]\in Y^{sc}$. If $y-\bw[y]\in Y_{Q,n}$, then it is in $Y_{Q,n}^{sc}$.
\end{proof}

Let $\bt_{Q,n}^{sc}$ be the split torus with cocharacter group $Y_{Q,n}^{sc}$, and $T_{Q,n}^{sc}:=\bt_{Q,n}^{sc}(F)$.

\begin{Lem}\label{lem:dependence on chi}
The coefficient $\tau(\bw,\ochi,y,\bw'[y])$ depends only on $\ochi|_{\oT_{Q,n}^{sc}}$ for $\bw,\bw'\in W$.
\end{Lem}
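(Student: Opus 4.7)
The plan is to reduce to the case of a simple reflection via the cocycle relation of Lemma~\ref{lem：local coefficient matrix cocycle}. Fix a reduced decomposition $\bw=\bw_1\cdots\bw_k$ with $\bw_i=\sr_{\al_i}$. Iterated application of the cocycle expresses
\[
\tau(\bw,\ochi,y,\bw'[y]) = \sum_{\gamma_1,\ldots,\gamma_{k-1}\in\oT/\oA}\ \prod_{i=1}^{k} \tau\bigl(\sr_{\al_i},\ {}^{\bw_{i+1}\cdots\bw_k}\ochi,\ \gamma_{i-1},\,\gamma_i\bigr),
\]
with $\gamma_0=\bfs_y$ and $\gamma_k=\bfs_{\bw'[y]}$. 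By Lemma~\ref{lem:calcualation of tau 1}, a summand can be nonzero only if $\gamma_i$ lies in the coset of $\tilde y_i:=\bw_i^{a_i}\cdots\bw_1^{a_1}[y]\in Y$ for some subexpression $(a_1,\ldots,a_k)\in\{0,1\}^k$; I choose the representatives $\gamma_i=\bfs_{\tilde y_i}$ for $i<k$ and keep $\gamma_k=\bfs_{\bw'[y]}$. The task reduces to checking that each factor in the resulting product depends on $\ochi$ only through $\ochi|_{\oT_{Q,n}^{sc}}$.

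For every $i<k$, the factor $\tau(\sr_{\al_i},\eta_i,\bfs_{\tilde y_{i-1}},\bfs_{\tilde y_i})$ with $\eta_i={}^{\bw_{i+1}\cdots\bw_k}\ochi$ sits in the exact base form of Theorem~\ref{thm:local coefficient matrix}: when $a_i=0$ the two arguments coincide and the factor is the $\tau^1$-value, a rational expression in $\eta_i(\oh_{\al_i}(\varpi^{n_{\al_i}}))$; when $a_i=1$ the arguments satisfy $\tilde y_{i-1}=\sr_{\al_i}[\tilde y_i]$ and the factor is the $\tau^2$-value, a Gauss sum independent of $\ochi$. The element $\oh_{\al_i}(\varpi^{n_{\al_i}})$ agrees with $\bfs_{n_{\al_i}\al_i^\vee}=\bfs_{\al_{i,Q,n}^\vee}$ up to a $\mu_n$-scalar and lies in $\oT_{Q,n}^{sc}$; since Weyl-invariance of the family $\{\al_{Q,n}^\vee\}_{\al\in\Phi}$ makes $Y_{Q,n}^{sc}$ a $W$-stable sublattice, the restriction $\eta_i|_{\oT_{Q,n}^{sc}}$ is determined by $\ochi|_{\oT_{Q,n}^{sc}}$. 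Hence every intermediate factor depends on $\ochi$ only through $\ochi|_{\oT_{Q,n}^{sc}}$.

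It remains to handle the last factor $\tau(\sr_{\al_k},\ochi,\bfs_{\tilde y_{k-1}},\bfs_{\bw'[y]})$, whose right argument is the prescribed $\bfs_{\bw'[y]}$ rather than $\bfs_{\tilde y_k}$. Set $z:=\tilde y_{k-1}-\bw'[y]$ if $a_k=0$, or $z:=\tilde y_{k-1}-\sr_{\al_k}[\bw'[y]]$ if $a_k=1$. In either case the nonvanishing criterion of Theorem~\ref{thm:local coefficient matrix} forces $z\in Y_{Q,n}$, and because $z$ is the difference of two elements of the shifted Weyl orbit of $y$, it also lies in $Y^{sc}$ by \cite{BBF08} Lemma~2, so $z\in Y_{Q,n}\cap Y^{sc}=Y_{Q,n}^{sc}$ exactly as in the proof of Lemma~\ref{lem:several lattices}. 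Using $(\varpi,\varpi)_n=1$ to kill the bisector cocycle, one writes $\bfs_{\tilde y_{k-1}}=\bfs_{\bw'[y]}\bfs_z$ (respectively $\bfs_{\sr_{\al_k}[\bw'[y]]}\bfs_z$), and the transformation rule of Theorem~\ref{thm:local coefficient matrix} extracts the scalar $({}^{\sr_{\al_k}}\ochi)^{-1}(\bfs_z)$, which depends only on $\ochi|_{\oT_{Q,n}^{sc}}$ since $\bfs_z\in\oT_{Q,n}^{sc}$; the remaining base-form factor is analyzed as in the intermediate case. The main point of care in this plan is exactly this endpoint bookkeeping, namely verifying that every shift $z$ arising from the mismatch between the subexpression endpoint and $\bw'[y]$ (or its $\sr_{\al_k}$-translate) always lies in $Y_{Q,n}^{sc}$ rather than merely $Y_{Q,n}$, and this is what the $Y^{sc}$-containment of Weyl-orbit differences delivers.
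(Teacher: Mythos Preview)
Your approach is essentially the paper's: reduce via the cocycle relation to simple reflections and invoke the lattice identity $Y_{Q,n}\cap Y^{sc}=Y_{Q,n}^{sc}$ (Lemma~\ref{lem:several lattices}). The paper packages this as an induction on $\ell(\bw)$ after first reducing to $\bw'=\bw_k^{a_k}\cdots\bw_1^{a_1}$; you instead expand the full cocycle sum and track representatives, but the key ingredients are the same.

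There is one gap in your treatment of the intermediate factors. You assert that when $a_i=0$ the factor is the $\tau^1$-value and when $a_i=1$ it is the $\tau^2$-Gauss sum. This dichotomy is only correct when $\tilde y_{i-1}\not\equiv\sr_{\al_i}[\tilde y_{i-1}]\bmod Y_{Q,n}$. In the degenerate case where this congruence holds, the two cosets coincide, the full factor $\tau(\sr_{\al_i},\eta_i,\bfs_{\tilde y_{i-1}},\bfs_{\tilde y_{i-1}})$ is $\tau^1+\tau^2$, and the $\tau^2$-piece is not in the base form of Theorem~\ref{thm:local coefficient matrix}. The fix is exactly the shift argument you carry out at the endpoint: the required shift $z=\tilde y_{i-1}-\sr_{\al_i}[\tilde y_{i-1}]$ is again a difference of elements in the shifted Weyl orbit of $y$, hence lies in $Y_{Q,n}\cap Y^{sc}=Y_{Q,n}^{sc}$, so the extracted scalar $({}^{\sr_{\al_i}}\eta_i)^{-1}(\bfs_z)$ depends only on $\ochi|_{\oT_{Q,n}^{sc}}$. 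Once you add this case to the intermediate analysis (or, more cleanly, phrase the whole thing as an induction on $\ell(\bw)$ as the paper does), the argument is complete.
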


\begin{proof}
If $\bw=\id$, this result follows from Lemma \ref{lem:several lattices} and Lemma \ref{lem：local coefficient matrix cocycle}.

We now consider a nontrivial Weyl group element with reduced decomposition $\bw=\bw_1\cdots \bw_k$. If $\tau(\bw,\ochi,y,\bw'[y])\neq 0$, then by Lemma \ref{lem:calcualation of tau 1}, $\bw'[y]\equiv \bw_k^{a_k}\cdots \bw_1^{a_1}[y] \mod Y_{Q,n}$ for some $a_1,\cdots, a_k\in\{0,1\}$. By Lemma \ref{lem:several lattices}, $\bw'[y]\equiv \bw_k^{a_k}\cdots \bw_1^{a_1}[y] \mod Y_{Q,n}^{sc}$. So it suffices to prove the result for elements of the form $\bw'=\bw_k^{a_k}\cdots \bw_1^{a_1}$.

We now argue by induction on the length of $\bw$. If $\bw=\sr_\al$, then the result is straightforward when $y\not\equiv \sr_\al[y]\mod Y_{Q,n}$. If $y\equiv \sr_\al[y]\mod Y_{Q,n}$, then $y\equiv \sr_\al[y]\mod Y_{Q,n}^{sc}$. The same argument above applies. The same argument again applies in the induction argument. This proves the result.
\end{proof}

\subsection{Unramified Whittaker functions}
For an unramified principal series representation $I(\ochi)$, let $\cw$ be the image of $\phi_K$ in the Whittaker model defined by \eqref{eq:whittaker functional}. In other words,
\[
\cw_\lam(\ot,\ochi)=\delta_B^{-1/2}(\ot)W_{\lam}(\ot\cdot \phi_K).
\]
Note that our definition here is slightly different from \cite{Gao18b}. We divide by the modular quasi-character $\delta_B^{-1/2}$ to make our calculation slightly easier. If $\lam$ is defined by $\gamma$, we write $\cw_\gamma=\cw_{\lam_\gamma}$.
We also define $\cw_{y}(y',\ochi)=\cw_{\bfs_y}(\bfs_{y'},\ochi)$.

An element $\ot\in\oT$ is called dominant if $\ot\cdot (U\cap K)\cdot \ot^{-1}\subset K$.
\begin{Thm}\label{thm:unramified whittaker function}
Let $I(\ochi)$ be an unramified principal series of $\oG$ and $\gamma\in\oT$. Let $\cw_\gamma$ be the unramified Whittaker function associated to $\phi_K$. Then, $\cw_\gamma(\ot)=0$ unless $\ot\in\oT$ is dominant. Moreover, for dominant $\ot$, one has
\[
\cw_\gamma(\ot,\ochi)=\sum_{\bw\in W} c(\bw_G \bw,\ochi) \cdot \tau(\bw^{-1},{}^{\bw}\ochi,\gamma,w_G\cdot \ot\cdot w_{G}^{-1}).
\]
\end{Thm}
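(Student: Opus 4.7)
\emph{Plan.} The statement is the metaplectic analogue of the Casselman--Shalika formula, and I would prove it following \cite{McNamara16}.

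For the vanishing on non-dominant $\ot$, first extend $\cw_\gamma$ to a Whittaker function on $\oG$ by $\cw_\gamma(\og) := W_{\lambda_\gamma^\ochi}(\og\cdot\phi_K)$ (so that the original definition is its restriction to $\oT$, up to the $\delta_B^{-1/2}$ normalization). The Whittaker property yields $\cw_\gamma(u\og) = \psi(u)\cw_\gamma(\og)$ for $u \in U$. For $u \in U \cap \ot K\ot^{-1}$, writing $u\ot = \ot\,(\ot^{-1}u\ot)$ with $\ot^{-1}u\ot \in K$ and invoking the right $K$-invariance of $\phi_K$ gives $\cw_\gamma(u\ot) = \cw_\gamma(\ot)$. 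Comparing the two expressions forces $\psi \equiv 1$ on $U \cap \ot K\ot^{-1}$; since $\psi$ has conductor $O_F$ and is nontrivial on $e_\al(\varpi^{-1})$ for every simple $\al$, this condition is equivalent to $\ot(U\cap K)\ot^{-1}\subseteq U\cap K$, i.e., $\ot$ dominant.

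For dominant $\ot$, I would start from
$$\cw_\gamma(\ot,\ochi) = \delta_B^{-1/2}(\ot)\,\lambda_\gamma^\ochi\!\left(\int_U \phi_K(w_G u\ot)\psi^{-1}(u)\,du\right)$$
and decompose the integration via the Iwasawa/Bruhat stratification of $U$ indexed by $\bw\in W$. The key manipulation is the conjugation $w_G u\ot = (w_G\ot w_G^{-1})(w_G u)$ after a change of variable $u\mapsto \ot u\ot^{-1}$, which replaces the evaluation point by $w_G\ot w_G^{-1} = \bfs_{\bw_G(y)}$ (matching the second argument of $\tau$ in the statement) while its jacobian absorbs into $\delta_B^{1/2}(\ot)$. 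Using the length-additive factorization $\bw_G = (\bw_G\bw)\cdot\bw^{-1}$, valid for every $\bw\in W$, the intertwining operator factors as $T_{\bw_G,{}^\bw\ochi} = T_{\bw_G\bw,\ochi}\circ T_{\bw^{-1},{}^\bw\ochi}$. The contribution of the $\bw$-stratum then splits accordingly: the ``intertwining'' direction produces the Gindikin--Karpelevich factor $c(\bw_G\bw,\ochi)$ from the identity $T_{\bw_G\bw,\ochi}(\phi_K)=c(\bw_G\bw,\ochi)\phi_K'$, while the ``Whittaker'' direction produces the local coefficient matrix entry $\tau(\bw^{-1},{}^\bw\ochi,\gamma,w_G\ot w_G^{-1})$ by Theorem \ref{thm:local coefficient matrix}. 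Summing over $\bw$ yields the stated formula.

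The main obstacle is the careful bookkeeping: tracking how the twisted character $\psi^{-1}(u)$ interacts with the conjugations by $\ot$ and $w_G$, identifying each Bruhat-stratum contribution with a single $\tau$-entry rather than an iterated product of simple-reflection entries, and using the left/right $\oA$-equivariance from Lemma \ref{lem：local coefficient matrix cocycle} to place $\gamma$ in the first argument of $\tau$ and $w_G\ot w_G^{-1}$ in the second. In the linear case the $\tau$-entries collapse to signs, the $\gamma$-dependence drops out, and one recovers the classical Casselman--Shalika formula; the metaplectic non-uniqueness of Whittaker models manifests precisely as the presence of the nontrivial local coefficient matrix.
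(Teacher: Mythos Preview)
Your vanishing argument for non-dominant $\ot$ is correct and standard.

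For the main formula, however, your sketch has a genuine gap: the mechanism producing the sum over $W$ is not the one you describe. There is no ``Iwasawa/Bruhat stratification of $U$ indexed by $\bw\in W$''; the unipotent radical $U$ is a single affine space with no such cell decomposition. Likewise, the factorization $T_{\bw_G,{}^{\bw}\ochi}=T_{\bw_G\bw,\ochi}\circ T_{\bw^{-1},{}^{\bw}\ochi}$ is a single identity for each fixed $\bw$ and by itself does not generate a summation over $W$. As written, your argument never explains where the $|W|$ terms come from.

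The paper's own proof is simply a reference to \cite{Gao18b} Proposition 3.3, whose argument (following the original Casselman--Shalika method as adapted in \cite{CS80}, \cite{KP84}, \cite{McNamara16}) runs as follows. For regular $\ochi$ the Jacquet module of $I(\ochi)$ is semisimple with summands $\{\delta_B^{1/2}\cdot{}^{\bw}\ochi\}_{\bw\in W}$; this yields, for dominant $\ot$, an expansion of $\cw_\gamma(\ot,\ochi)$ as a sum over $W$ with undetermined coefficients. Equivalently, one passes to the Iwahori-fixed space $I(\ochi)^I$ and uses the Casselman basis $\{f_\bw\}_{\bw\in W}$ characterized by their images in the Jacquet module, together with the expansion of $\phi_K$ in this basis. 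The individual coefficients are then pinned down by applying the functional equations coming from the intertwining operators $T_{\bw^{-1},{}^{\bw}\ochi}$ to the spherical vector, which is precisely where the factors $c(\bw_G\bw,\ochi)$ and the matrix entries $\tau(\bw^{-1},{}^{\bw}\ochi,\gamma,w_G\ot w_G^{-1})$ enter. The change of variable you describe and the bookkeeping with $w_G\ot w_G^{-1}$ are indeed part of the computation of each term, but they come \emph{after} the Jacquet-module decomposition, not in place of it.
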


\begin{proof}
The proof in \cite{Gao18b} Proposition 3.3 works without essential change.
\end{proof}

\section{An inductive formula}\label{sec:inductive}

As a consequence of Theorem \ref{thm:unramified whittaker function}, we now prove an inductive formula for unramified Whittaker function. The main result in this section is a generalization of the material presented in \cite{Suzuki98} Section 7.1.

For certain types of root systems, our formula might admit simplification -- we discuss this in Sect. \ref{sec:general linear}. See also \cite{Suzuki97} Lemma 4.1 and \cite{Suzuki98} Section 7.1. Note that there are some typos in the proofs of these two papers. We give full details here.

\subsection{Basic setup}

Let $\Delta'$ be a subset of $\Delta$. Let $\bp=\bm\bn$ be the parabolic subgroup of $\bg$ associated with $\Delta'$. We write
\[
(X, \ \Phi_M, \ \Delta_M; \  Y, \ \Phi_M^\vee, \ \Delta_M^\vee)
\]
for the root datum of $M$. Since $\bt\subset \bm$, the character and cocharacter lattices $X$ and $Y$ respectively are unchanged. However, we have $\Delta_M= \Delta'$ and $\Delta_M^\vee =\{\beta^\vee:\beta\in\Delta'\}$. Let $\bb_M=\bt \bu_M$ be the Borel subgroup of $\bm$ corresponding to $\Delta_M$. Denote by $W(\bm)\subset W(\bg)$ the Weyl group of $(\bm,\bt)$.

The functorial properties with respect to restriction is studied in \cite{GG18} Sect. 5.5. The cover $\overline{M}$ is associated to the pair $(D,\eta|_{Y_M^{\mathrm{sc}}})$, where the quadratic form $Q(x)=D(x,x)$ carries only the $W(\bm)$-invariance by applying the ``forgetful'' functor from $W$-invariance.

Given a genuine character $\ochi:\oT\to\bc^\times$, one can define an unramified principal series representation $I_{\oM}(\ochi)$ on $\oM$. By induction in stages, $I(\ochi)=\Ind_{\oP}^{\oG}I_{\oM}(\chi)$. Here $I_{\oM}(\ochi)$ is inflated to a representation on $\oP$ in the usual way. The study of Whittaker models and Whittaker functions applies to representations on $\oM$. We add subscript $\oM$ to indicate the ambient group.

We have the following observations:
\begin{itemize}
\item The section $\bfs_{\oM,y}=\bfs_{\oG,y}$ for $y\in Y$. So the notation $\bfs_y$ does not arise any confusion.
\item For $\bw\in W(\bm)$, one can calculate the local coefficient matrix $\tau_{\oM}(\bw,\ochi,y,y')$. It is easy to check that $\tau_{\oM}(\bw,\ochi,y,y')=\tau_{\oG}(\bw,\ochi,y,y')$. Thus we can safely drop the subscript.
\end{itemize}

Let $W^M$ be the set of minimal representatives in $W(\bm)\bs W$.
A element $\bw\in W$ can be uniquely written as $\bw=\bw_1\bw_2$, where $\bw_1\in W(\bm)$ and $\bw_2\in W^M$. The long element $\bw_G$ is written as $\bw_M\bw^M$.

\begin{Lem}
We have
\[
\Phi(\bw_G \bw)=\Phi(\bw^{M,-1}\bw_2)\sqcup \bw_2^{-1}(\Phi_M(\bw_M \bw_1)).
\]
\end{Lem}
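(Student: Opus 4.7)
The plan is to reduce both sides to explicit subsets of $\Phi^+$ using two standard ``sign-reversal'' identities for the longest elements $\bw_G$ and $\bw_M$, and to verify equality by direct set manipulation. The key identity is that for any $\bw'\in W$,
\[
\Phi(\bw_G \bw')=\Phi^+\setminus \Phi(\bw'),
\]
since $\bw_G$ reverses the sign of every root; the analogous identity holds inside $W(\bm)$ with $\bw_M$ and $\Phi_M^+$ in place of $\bw_G$ and $\Phi^+$.

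Applying this to the LHS gives $\Phi(\bw_G\bw)=\Phi^+\setminus \Phi(\bw)$. The decomposition $\bw=\bw_1\bw_2$ is length-additive because $\bw_2\in W^M$ is characterized by $\bw_2^{-1}(\Phi_M^+)\subset \Phi^+$; this yields
\[
\Phi(\bw)=\Phi(\bw_2)\sqcup \bw_2^{-1}\bigl(\Phi_M(\bw_1)\bigr),
\]
where one also uses $\Phi(\bw_1)=\Phi_M(\bw_1)$ since $\bw_1\in W(\bm)$ preserves signs of roots outside $\Phi_M$.

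For the RHS, I would first observe that $\bw^{M,-1}=\bw_G\bw_M$: from $\bw_G=\bw_M\bw^M$ and the involutivity of $\bw_G$ and $\bw_M$, one gets $\bw^M=\bw_M\bw_G$, whence $\bw^{M,-1}=\bw_G\bw_M$. The sign-reversal identity then gives $\Phi(\bw^{M,-1}\bw_2)=\Phi^+\setminus \Phi(\bw_M\bw_2)$, and the decomposition $\bw_M\bw_2$ is length-additive (same reason, $\bw_2\in W^M$), so
\[
\Phi(\bw_M\bw_2)=\Phi(\bw_2)\sqcup \bw_2^{-1}(\Phi_M^+).
\]
Inside $W(\bm)$, the sign-reversal identity also gives $\Phi_M(\bw_M\bw_1)=\Phi_M^+\setminus \Phi_M(\bw_1)$.

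Assembling these pieces, the RHS becomes
\[
\bigl(\Phi^+\setminus(\Phi(\bw_2)\sqcup \bw_2^{-1}\Phi_M^+)\bigr)\sqcup \bw_2^{-1}\bigl(\Phi_M^+\setminus \Phi_M(\bw_1)\bigr).
\]
Partitioning $\bw_2^{-1}\Phi_M^+=\bw_2^{-1}\Phi_M(\bw_1)\sqcup \bw_2^{-1}\bigl(\Phi_M^+\setminus \Phi_M(\bw_1)\bigr)$, the second summand is exactly what gets removed from $\Phi^+$ and then added back, leaving $\Phi^+\setminus\bigl(\Phi(\bw_2)\sqcup \bw_2^{-1}\Phi_M(\bw_1)\bigr)=\Phi^+\setminus \Phi(\bw)$, which is the LHS. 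Disjointness of the two pieces on the RHS is automatic: the second lies inside $\bw_2^{-1}\Phi_M^+\subset \Phi^+$ (using $\bw_2\in W^M$), while the first is defined by excluding this set. There is no real obstacle here: the computation is essentially bookkeeping once one has the structural inputs $\bw^{M,-1}=\bw_G\bw_M$ and the length-additivity of $\bw_1\bw_2$ and $\bw_M\bw_2$ (both guaranteed by $\bw_2\in W^M$).
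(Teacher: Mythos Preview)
Your proof is correct. The approach differs from the paper's in organization, though both are short and rest on the same underlying facts about minimal coset representatives.

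The paper argues element-wise: it rewrites $\Phi(\bw_G\bw)=\{\al>0:\bw(\al)>0\}$, observes that every such $\al$ satisfies $\bw_2(\al)>0$, and then splits according to whether $\bw_2(\al)\in\Phi^+\setminus\Phi_M^+$ or $\bw_2(\al)\in\Phi_M^+$. Using $\Phi(\bw^{M,-1})=\Phi^+\setminus\Phi_M^+$ it identifies the first piece with $\Phi(\bw^{M,-1}\bw_2)$, and via the substitution $\beta=\bw_2(\al)$ it identifies the second with $\bw_2^{-1}(\Phi_M(\bw_M\bw_1))$. Your route is more structural: you invoke the complement identity $\Phi(\bw_G\bw')=\Phi^+\setminus\Phi(\bw')$ (and its $M$-analogue) together with the length-additive formula $\Phi(uv)=\Phi(v)\sqcup v^{-1}\Phi(u)$ applied to $\bw=\bw_1\bw_2$ and $\bw_M\bw_2$, then match the two complements by set algebra. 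Your argument has the advantage of making the role of length-additivity explicit and of handling disjointness cleanly; the paper's has the advantage of being slightly more direct and requiring less notation. Either proof would be acceptable.
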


\begin{proof}
Observe that
\[
\Phi(\bw_G\bw)=\{\al>0: \bw(\al)>0\}.
\]
and any element in this set satisfies $\bw_2(\al)>0$.
We have
\[
\{\al>0: \bw(\al)>0\}=\{\al>0: \bw_2(\al)\in \Phi^+-\Phi_M^+,\bw(\al)>0\}\sqcup \{\al>0: \bw_2(\al)\in\Phi_M^+,\bw(\al)>0\}.
\]
We now show that the first set is $\Phi(\bw^{M,-1}\bw_2)$ and the second set is $\bw_2^{-1}(\Phi_M(\bw_M \bw_1))$.

Note that $\Phi(\bw^{M,-1})=\Phi^+-\Phi_M^+$. Thus
\begin{equation}\label{eq:the first set}
\Phi(\bw^{M,-1}\bw_2)=\{\al>0:\bw^{M,-1}\bw_2(\al)<0\}=\{\al>0:\bw_2(\al)\in \Phi^+ - \Phi_M^+\}.
\end{equation}
Note that if $\bw_2(\al) \in \Phi^+ - \Phi_M^+$, then $\bw(\al)>0$. Thus \eqref{eq:the first set} is the first set.

Let $\beta=\bw_2(\al)$. Then the second set is
\[
\{\al>0: \bw_2(\al)\in\Phi_M^+,\bw(\al)>0\}=\{\bw_2^{-1}(\beta)\in \Phi_M^+:\bw_1(\beta)>0\}=\bw_2^{-1}(\Phi_M(\bw_M \bw_1)).
\]
Now the result follows.
\end{proof}

\subsection{The inductive formula}

We now give the inductive formula.
\begin{Prop}\label{prop:inductive formula}
We have
\[
\begin{aligned}
\cw_\gamma(\ot,\ochi)=&\sum_{\bw_2\in W^M}\sum_{\gamma'\in \oT/\oA} \tau(\bw_2^{-1},{}^{\bw_2}\ochi,\gamma,\gamma') \cw_{M,\gamma'}(w^{M}\cdot \ot \cdot w^{M,-1},{}^{\bw_2}\ochi)\\
&\cdot \left(\prod_{\al>0:\bw^{M,-1}\bw_2(\al)<0} \dfrac{1-\ochi_\al q^{-1}}{1-\ochi_\al} \right).
\end{aligned}
\]
\end{Prop}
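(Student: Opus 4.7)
The plan is to start from the Casselman--Shalika formula (Theorem \ref{thm:unramified whittaker function}) for $\oG$ and factor each $\bw\in W$ uniquely as $\bw=\bw_1\bw_2$ with $\bw_1\in W(\bm)$ and $\bw_2\in W^M$. The goal is to rearrange the resulting double sum so that the inner summation over $\bw_1\in W(\bm)$ is recognized, via Theorem \ref{thm:unramified whittaker function} applied to the Levi $\oM$, as a Whittaker function $\cw_{M,\gamma'}$.

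First I would factor the Gindikin--Karpelevich coefficient. The lemma preceding the proposition gives $\Phi(\bw_G\bw)=\Phi(\bw^{M,-1}\bw_2)\sqcup\bw_2^{-1}(\Phi_M(\bw_M\bw_1))$, hence
\[
c(\bw_G\bw,\ochi)=c(\bw^{M,-1}\bw_2,\ochi)\cdot\prod_{\beta\in\Phi_M(\bw_M\bw_1)}c(\sr_{\bw_2^{-1}\beta},\ochi).
\]
Weyl-invariance of $Q$ gives $n_{\bw_2^{-1}\beta}=n_\beta$, and the Weyl-conjugation formula on $\oT$ combined with the paper's unramified hypotheses yields the identification $\ochi_{\bw_2^{-1}\beta}=({}^{\bw_2}\ochi)_\beta$; consequently the second factor becomes the Levi coefficient $c_M(\bw_M\bw_1,{}^{\bw_2}\ochi)$. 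Next I would factor the local coefficient $\tau$. Because $\ell(\bw_1\bw_2)=\ell(\bw_1)+\ell(\bw_2)$ for $\bw_1\in W(\bm),\bw_2\in W^M$, the inverse factorization $\bw^{-1}=\bw_2^{-1}\bw_1^{-1}$ is also reduced, so Lemma \ref{lem：local coefficient matrix cocycle} gives
\[
\tau(\bw^{-1},{}^{\bw}\ochi,\gamma,w_G\ot w_G^{-1})=\sum_{\gamma'\in\oT/\oA}\tau(\bw_2^{-1},{}^{\bw_2}\ochi,\gamma,\gamma')\,\tau(\bw_1^{-1},{}^{\bw_1\bw_2}\ochi,\gamma',w_G\ot w_G^{-1}),
\]
where I use the identity ${}^{\bw_1^{-1}}({}^{\bw_1\bw_2}\ochi)={}^{\bw_2}\ochi$.

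Plugging both factorizations into Theorem \ref{thm:unramified whittaker function} and interchanging the order of summation isolates the inner expression
\[
\sum_{\bw_1\in W(\bm)}c_M(\bw_M\bw_1,{}^{\bw_2}\ochi)\,\tau(\bw_1^{-1},{}^{\bw_1\bw_2}\ochi,\gamma',w_G\ot w_G^{-1}).
\]
Since $\bw_G=\bw_M\bw^M$, we have $w_G\ot w_G^{-1}=w_M(w^M\ot w^{M,-1})w_M^{-1}$ in $\oT$ (any $\mu_n$-ambiguity in the choice of lifts is central and cancels under conjugation). Therefore Theorem \ref{thm:unramified whittaker function} applied to $\oM$ with torus element $w^M\ot w^{M,-1}$ and character ${}^{\bw_2}\ochi$ identifies this inner sum with $\cw_{M,\gamma'}(w^M\ot w^{M,-1},{}^{\bw_2}\ochi)$. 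Expanding $c(\bw^{M,-1}\bw_2,\ochi)=\prod_{\al>0:\bw^{M,-1}\bw_2(\al)<0}(1-q^{-1}\ochi_\al)/(1-\ochi_\al)$ then yields the stated formula. The main obstacle is the bookkeeping in the first factorization: one has to check that the Hilbert-symbol corrections arising in $w_2\oh_{\bw_2^{-1}\beta}(a)w_2^{-1}=(*)\cdot \oh_\beta(a)$ evaluate trivially when $a=\varpi^{n_\beta}$ against unramified $\ochi$, so that the clean identification $\ochi_{\bw_2^{-1}\beta}=({}^{\bw_2}\ochi)_\beta$ -- which is what converts $\prod c(\sr_{\bw_2^{-1}\beta},\ochi)$ into $c_M(\bw_M\bw_1,{}^{\bw_2}\ochi)$ -- actually holds.
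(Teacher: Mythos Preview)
Your proposal is correct and follows essentially the same approach as the paper: factor $\bw=\bw_1\bw_2$, split both $c(\bw_G\bw,\ochi)$ (via the lemma on $\Phi(\bw_G\bw)$) and $\tau(\bw^{-1},\ldots)$ (via the cocycle relation), and then recognize the inner $W(\bm)$-sum as the Levi Whittaker function. For the identity $\ochi_{\bw_2^{-1}(\beta)}=({}^{\bw_2}\ochi)_\beta$ the paper bypasses the Hilbert-symbol bookkeeping you flag by invoking directly the pairing identity $\la x,\bw^{-1}(\al^\vee)\ra=\la\bw(x),\al^\vee\ra$.
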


\begin{proof}
Recall that
\[
\cw_\gamma(\ot,\ochi)=\sum_{\bw\in W} c(\bw_G \bw,\chi) \cdot \tau(\bw^{-1},{}^{\bw}\chi,\gamma,w_G\cdot \ot\cdot w_{G}^{-1})
\]
Given $\bw\in W$, it can be uniquely written as $\bw=\bw_1\bw_2$ as above. By the cocycle relation in Lemma \ref{lem：local coefficient matrix cocycle}, we deduce that
\[
\tau(\bw^{-1},{}^{\bw}\ochi,\gamma,w_G\cdot \bar{t}\cdot w_{G}^{-1})
=
\sum_{\gamma'\in \oT/\oA} \tau(\bw_2^{-1},{}^{\bw_2}\ochi,\gamma,\gamma')
\tau(\bw_1^{-1},{}^{\bw}\chi,\gamma',w_G\cdot \bar{t}\cdot w_{G}^{-1})
\]
On the other hand,
\[
\begin{aligned}
c(\bw_G\bw,\ochi)=&\prod_{\al>0, \ \bw_G\bw(\al)<0}\dfrac{1-\ochi_\al q^{-1}}{1-\ochi_\al}\\
=&\left(\prod_{\al>0:\bw^{M,-1}\bw_2(\al)<0} \dfrac{1-\ochi_\al q^{-1}}{1-\ochi_\al} \right)
\left(\prod_{\bw_2^{-1} \{ \al>0:\bw_M\bw_1(\al)<0 \}}\dfrac{1-\ochi_{\bw_2^{-1}(\al)} q^{-1}}{1-\ochi_{\bw_2^{-1}(\al)}}\right)\\
=& \left(\prod_{\al>0:\bw^{M,-1}\bw_2(\al)<0} \dfrac{1-\ochi_\al q^{-1}}{1-\ochi_\al} \right)
\left(\prod_{\al>0:\bw_M\bw_1(\al)<0 }\dfrac{1-({}^{\bw_2}\ochi)_\al q^{-1}}{1-({}^{\bw_2}\ochi)_\al}\right).
\end{aligned}
\]
Here, we use the following fact: $\ochi_{\bw^{-1}(\al)}=({}^{\bw}\ochi)_\al$.
This can be seen from the following identity: $\la x,\bw^{-1}(\al^\vee)\ra =\la \bw(x),\al^\vee\ra$ for any $x\in X$.

From this we deduce that
\[
\begin{aligned}
&\sum_{\bw\in W} c(\bw_G \bw,\chi) \cdot \tau(\bw^{-1},{}^{\bw}\ochi,\gamma,w_G\cdot \ot\cdot w_{G}^{-1})\\
=&\sum_{\bw_2\in W^M}\sum_{\bw_1\in W(\bm)}\sum_{\gamma'\in \oT/\oA} \tau(\bw_2^{-1},{}^{\bw_2}\ochi,\gamma,\gamma')
\tau(\bw_1^{-1},{}^{\bw}\chi,\gamma',w_G\cdot \bar{t}\cdot w_{G}^{-1})\\
 &\cdot \left(\prod_{\al>0:\bw^{M,-1}\bw_2(\al)<0} \dfrac{1-\ochi_\al q^{-1}}{1-\ochi_\al} \right)
\left(\prod_{\al>0:\bw_M\bw_1(\al)<0 }\dfrac{1-({}^{\bw_2}\ochi)_\al q^{-1}}{1-({}^{\bw_2}\ochi)_\al}\right).\\
\end{aligned}
\]
Note that
\[
\sum_{\bw_1\in W(\bm)} \tau(\bw_1^{-1},{}^{\bw}\chi,\gamma',w_G\cdot \bar{t}\cdot w_{G}^{-1})
\left(\prod_{\al>0:\bw_M\bw_1(\al)<0 }\dfrac{1-({}^{\bw_2}\ochi)_\al q^{-1}}{1-({}^{\bw_2}\ochi)_\al}\right)
=\cw_{M,\gamma'}(w^{M}\cdot \ot\cdot  w^{M,-1},{}^{\bw_2}\ochi).
\]
Thus we deduce that $\cw_\gamma(\ot,\ochi)$ equals
\[
\sum_{\bw_2\in W^M}\sum_{\gamma'\in \oT/\oA} \tau(\bw_2^{-1},{}^{\bw_2}\ochi,\gamma,\gamma') \cw_{M,\gamma'}(w^{M}\cdot \ot\cdot w^{M,-1},{}^{\bw_2}\ochi)
\left(\prod_{\al>0:\bw^{M,-1}\bw_2(\al)<0} \dfrac{1-\ochi_\al q^{-1}}{1-\ochi_\al} \right).
\]
\end{proof}

\subsection{Local coefficient matrix}\label{sec:local mat coeff for Levi}
We end this section with a useful result on the local coefficient matrix.
We now write $\bm=\bm_1\times \cdots \times \bm_k$. Let $\bt_i=\bt\cap \bm_i$. Let $Y_i$ be the cocharacter lattice of $\bt_i$. Let $W(\bm_i)$ be the Weyl group of $(\bm_i,\bt_i)$.

Let $\bw=(\bw_1,\cdots,\bw_k)\in W(\bm)$ with $\bw_i\in W(\bm_i)$. Let $\bw'=(\bw'_1,\cdots,\bw'_k)\in W(\bm)$ with $\bw'_i\in W(\bm_i)$. Let $y=(y_1,\cdots,y_k)$ where $y_i\in Y_i$. Let $y'_i=\bw_i'[y_i]$.

We now consider $\ochi|_{\oT_{Q,n}^{sc}}$. 
Let $\ochi_i$ be a character of $Z(\oT_i)$ so that its restriction to $\oT_{i,Q,n}^{sc}$ agrees with $\ochi_i|_{\oT_{i,Q,n}^{sc}}$. In such situations, we write $\ochi\sim (\ochi_1,\cdots,\ochi_k)$. Recall from Lemma \ref{lem:dependence on chi} that $\tau(\ochi_i,\bw_i,y,\bw_i'[y'])$ only depends on the choice of $\ochi_i|_{\oT_{i,Q,n}^{sc}}$ but not on the choice of $\ochi_i$.

\begin{Lem}\label{lem:10}
With notations as above,
\[
\tau(\bw,\ochi,y,y')=\prod_{i=1}^k \tau(\bw_i,\ochi_i,y_i,y'_i).
\]
\end{Lem}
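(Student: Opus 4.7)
The plan is to induct on $\ell(\bw) = \sum_i \ell_{W(\bm_i)}(\bw_i)$, exploiting that the subgroups $W(\bm_i) \subset W(\bg)$ pairwise commute since they act on disjoint root subsystems. For the base case $\bw = \id$, the operator $T_{\id,\ochi}$ is the identity, so $\tau(\id,\ochi,y,y')$ is a delta on $\oT/\oA$ and the factorization is immediate from $Y_{Q,n}^{sc,G} = \bigoplus_i Y_{Q,n}^{sc,M_i}$ (which holds because each simple coroot lies in a unique $Y_{M_i}^{sc}$ and $n_\al$ is intrinsic to that block).

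For the inductive step, I pick $j$ with $\bw_j \neq \id$, write $\bw_j = \bw_j'' \sr_\al$ with $\al \in \Delta_{M_j}$ and $\ell(\bw_j'') = \ell(\bw_j)-1$, and use commutativity to factor $\bw = \bv \cdot \sr_\al$ with $\bv := (\bw_1,\ldots,\bw_j'',\ldots,\bw_k) \in W(\bm)$ and $\ell(\bw) = \ell(\bv)+1$. The cocycle relation from Lemma \ref{lem：local coefficient matrix cocycle} then gives
\[
\tau(\bw, \ochi, y, y') = \sum_{y'' \in \oT/\oA} \tau(\bv, {}^{\sr_\al}\ochi, y, y'') \cdot \tau(\sr_\al, \ochi, y'', y').
\]
By Theorem \ref{thm:local coefficient matrix} together with Lemma \ref{lem:several lattices}, the second factor vanishes unless $y'' \equiv y'$ or $y'' \equiv \sr_\al[y']$ modulo $Y_{Q,n}^{sc}$. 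Since $\sr_\al$ acts trivially on $Y_i$ for $i \neq j$, and $Y_{Q,n}^{sc,G}$ splits as a direct sum, this pins $y''_i$ to $y'_i$ in $Y_i/Y_{i,Q,n}^{sc}$ for every $i \neq j$, collapsing the sum to a sum over $y''_j$ alone. Applying the inductive hypothesis to $\tau(\bv,{}^{\sr_\al}\ochi,y,y'')$, and using that ${}^{\sr_\al}\ochi|_{\oT_{i,Q,n}^{sc}} = \ochi_i$ for $i \neq j$ because $\sr_\al$ acts trivially on those blocks, the summand factors as $\prod_{i \neq j} \tau(\bw_i, \ochi_i, y_i, y'_i) \cdot \tau(\bw_j'', {}^{\sr_\al}\ochi_j, y_j, y''_j) \cdot \tau(\sr_\al, \ochi_j, y''_j, y'_j)$. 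The residual sum over $y''_j$ is exactly the cocycle expansion of $\tau(\bw_j,\ochi_j,y_j,y'_j)$ inside $\oM_j$, finishing the induction.

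The principal obstacle is that the cocycle sum ranges over $\oT/\oA \simeq Y/Y_{Q,n}$, and $Y_{Q,n}$ need not split as $\bigoplus_i Y_{i,Q,n}$ in the presence of cross terms in $Q$. The resolution is to invoke Lemma \ref{lem:dependence on chi}: since $\tau$ depends only on $\ochi|_{\oT_{Q,n}^{sc}}$, all vanishing and support arguments can be conducted at the level of $Y/Y_{Q,n}^{sc}$, where the direct-sum decomposition \emph{does} hold. A secondary bookkeeping issue — tracking the scalar prefactors from Lemma \ref{lem：local coefficient matrix cocycle} when switching representatives between $\oA$-cosets — is handled routinely using the equivariance $\tau(\bw,\ochi,\gamma \oz,\gamma' \oz') = {}^{\bw}\ochi^{-1}(\oz)\,\tau(\bw,\ochi,\gamma,\gamma')\,\ochi(\oz')$ in each block separately.
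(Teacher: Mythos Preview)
Your proposal is correct and follows essentially the same strategy as the paper's proof: induction on $\ell(\bw)$, peeling off a simple reflection, and using the cocycle relation together with the two-term support of $\tau(\sr_\al,-,-,-)$ from Theorem~\ref{thm:local coefficient matrix}. The only differences are organizational --- you handle general $k$ directly while the paper first reduces to $k=2$, and you place the simple reflection on the right ($\bw=\bv\sr_\al$) whereas the paper places it on the left ($\sr_\al\bw$); one small caution is that your claimed equality $Y_{Q,n}^{sc,G}=\bigoplus_i Y_{Q,n}^{sc,M_i}$ is not literally true (roots of $G$ outside $\Phi_M$ contribute to the left side), but what you actually need --- that $y'\equiv\sr_\al[y']\bmod Y_{Q,n}$ in $G$ iff $y'_j\equiv\sr_\al[y'_j]\bmod Y_{j,Q,n}$ in $M_j$ --- does hold, exactly as the paper verifies via Lemma~\ref{lem:several lattices}.
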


\begin{proof}
By induction, it suffices to prove the case $k=2$. So we assume $k=2$ from now on.

For the case $k=2$, we prove it by induction on the length of $\bw$. If $\bw=\id$, the result is trivial.

We now assume the result is true for $\bw$ and prove it for $\sr_\al \bw$ where $\ell(\sr_\al \bw)=\ell(\bw)+1$ and $\sr_\al$ is in either $W(\bm_1)$ or $W(\bm_2)$. We assume that $\sr_\al \in W(\bm_1)$ without loss of generality. We have
\[
\tau(\ochi,\sr_\al\bw,y,\bw'[y])
=\sum_{y''\in Y/Y_{Q,n}} \tau({}^{\bw}\ochi,\sr_\al,y,y'')\tau(\ochi,\bw,y'',\bw'[y]).
\]
The first term is nonzero only when $y''=y$ or $\sigma_\al[y]$. We write $y''=(y_1'',y_2'')$. By induction, we have
\[
\tau(\ochi,\bw,y'',\bw'[y])=\tau(\ochi_1,\bw_1,y''_1,\bw'_1[y_1]) \tau(\ochi_2,\bw_2,y''_2,\bw'_2[y_2]).
\]

Note that $\sr_\al[y]=(\sr_\al[y_1],y_2)$ and $\tau({}^{\sr_\al}\ochi,\sr_\al,y,\sr_\al[y])=\tau({}^{\sr_\al}\ochi_1,\sr_\al ,y_1,\sr_\al[y_1])$. By Lemma \ref{lem:several lattices}, it is easy to verify that $y\equiv\sr_\al[y]\mod Y_{Q,n}$ if and only if $y_1\equiv \sr_\al[y_1] \mod Y_{1,Q,n}$. If $y\not\equiv\sr_\al[y] \mod Y_{Q,n}$, then
\[
\begin{aligned}
&\tau(\ochi,\sr_\al \bw,y,y')\\
=&\tau(\sr_\al ,{}^{\bw}\ochi,y,y)\tau(\ochi_1,\bw_1,y_1,y'_1)\tau(\ochi_2,\bw_2,y_2,y'_2)
+\tau(\sr_\al ,{}^{\bw}\ochi,y,\sr_\al[y])\tau(\ochi_1,\bw_1,\sr_\al[y_1],y'_1)\tau(\ochi_2,\bw_2,y_2,y'_2)\\
=&(\tau(\sr_\al ,{}^{\bw}\ochi,y,y)\tau(\ochi_1,\bw_1,y_1,y'_1)
+\tau(\sr_\al ,{}^{\bw}\ochi,y,\sr_\al[y])\tau(\ochi_1,\bw_1,\sr_\al[y_1],y'_1))\tau(\ochi_2,\bw_2,y_2,y'_2)\\
=&\tau(\ochi_1,\sr_\al \bw_1,y_1,y_1')\tau(\ochi_2,\bw_2,y_2,y'_2)\\
\end{aligned}
\]
If $y\equiv \sr_\al[y] \mod Y_{Q,n}$,
\[
\begin{aligned}
&\tau(\ochi,\sr_\al \bw,y,y')\\
=&\tau(\sr_\al ,{}^{\bw}\ochi,y,y)\tau(\ochi_1,\bw_1,y_1,y'_1)\tau(\ochi_2,\bw_2,y_2,y'_2)\\
=&\tau(\ochi_1,\sr_\al \bw_1,y_1,y_1')\tau(\ochi_2,\bw_2,y_2,y'_2).\\
\end{aligned}
\]
\end{proof}

\section{Relative theta representations}\label{sec:relative theta}

We first recall the definition of theta representations and discuss its generalization given in \cite{Suzuki98} and \cite{Gao}.

\subsection{Definition}

We start with the following definition.

\begin{Def}
An unramified genuine character $\ochi$ of $Z(\oT)$ is called \textit{exceptional} if
\[
\ochi(\oh_\al(\varpi^{n_\al}))=q^{-1} \text{ for all }\al\in\Delta.
\]
The theta representation $\Theta(\oG,\ochi)$ associated to an exceptional character $\ochi$ is the unique Langlands quotient (see \cite{BJ13}) of $I(\ochi)$, which is also equal to the image of the intertwining operator $T_{\bw_G,\ochi}:I(\ochi)\to I({}^{\bw_G}\ochi)$.
\end{Def}

To make our discussion more flexible, we introduce the following definition. It can be viewed as a generalization of \cite{Suzuki98} and \cite{Gao}.
\begin{Def}\label{def:exceptional}
For any subset $\Delta'\subset \Delta$, a genuine character $\chi$ is called $\Delta'$-exceptional (resp. $\Delta'$-anti-exceptional) if $\ochi(\oh_\al(\varpi^{n_\al}))=q^{-1}$ (resp. $\ochi(\oh_\al(\varpi^{n_\al}))=q$) for every $\al\in\Delta'$. In the case $\Delta'=\Delta$, it is simply called exceptional or anti-exceptional, respectively.
\end{Def}

Let $\bm$ be the Levi subgroup corresponding to $\Delta'$. Then a $\Delta'$-exceptional character can be viewed as an exceptional character for $\oM$. In other words, we obtain a representation $\Theta(\oM,\ochi)$ of $\oM$ as the image of the intertwining operator
\[
T_{\bw_{M},\ochi}:I_{\oM}(\ochi)\to I_{\oM}({}^{\bw_M}\ochi).
\]
Here $\bw_M$ is the longest element in the Weyl group of $M$. We also add subscript `$\oM$' to indicate the ambient group. We will do so in the rest of this section.


We can now define a representation on $\oG$ by normalized induction:
\[
\Theta(\oG/\oM,\ochi):=\Ind_{\oP}^{\oG}\Theta(\oM,\ochi).
\]
We call it a \textit{relative Theta representation}.
The representation $\Theta(\oG/\oM,\ochi)$ can also be defined as the image of the intertwining operator
\[
T_{\bw_M,\ochi}:I(\ochi)\to I({}^{\bw_{M}}\ochi).
\]
Note that $\Theta(\oG/\oM,\ochi)$ might be reducible.

\subsection{Some properties}

We discuss some properties of $\Theta(\oG/\oM,\ochi)$.
The intertwining operator $T_{\bw_M,\ochi}:I(\ochi)\to I({}^{\bw_{M}}\ochi)$ induces a map on the space of Whittaker functional
\[
T_{\bw_M,\ochi}^{\ast}: \Wh_{\psi}(I({}^{\bw_{M}}\ochi))\to \Wh_{\psi}(I(\ochi)).
\]
The matrix is defined by
\[
T_{\bw_M,\ochi}^{\ast}(\lambda_{\gamma}^{{}^{\bw_M}\ochi})=\sum_{\gamma'\in \oT/\oA} \tau(\bw_M,\ochi,\gamma,\gamma')\cdot \lambda_{\gamma'}^{\ochi}.
\]

\begin{Prop}
A function $\bfc \in \Ftn(i(\ochi))$ gives rise to a functional in $\Wh_{\psi}(\Theta(\oG/\oM,\ochi))$ if and only if for all $\al\in \Delta'$,
\[
\sum_{\gamma\in\oT/\oA} \bfc(\gamma) \tau(\sr_\al,{}^{\sr_\al}\ochi,\gamma,\gamma')=0 \text{ for all }\gamma'.
\]
The left-hand side is independent of the choice of representatives for $\oT/\oA$.
\end{Prop}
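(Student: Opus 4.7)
The proposition has two assertions --- independence of the sum from the coset representatives in $\oT/\oA$, and the $\bfc$-characterization --- and I plan to dispose of them in turn. For independence, replacing a representative $\gamma$ by $\gamma\oz$ with $\oz\in\oA$ multiplies $\bfc(\gamma)$ by $\ochi(\oz)$ (by definition of $\Ftn(i(\ochi))$), while Lemma~\ref{lem：local coefficient matrix cocycle} applied to $\sr_\al$, together with the identity ${}^{\sr_\al}({}^{\sr_\al}\ochi)=\ochi$, multiplies $\tau(\sr_\al,{}^{\sr_\al}\ochi,\gamma,\gamma')$ by $\ochi(\oz)^{-1}$. These two scalars cancel inside the summand.

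For the characterization, I first recast the displayed condition operator-theoretically. Expanding $T_{\sr_\al,{}^{\sr_\al}\ochi}^{\ast}$ in the basis $\{\lambda_{\gamma'}^{{}^{\sr_\al}\ochi}\}$, the vanishing $\sum_{\gamma}\bfc(\gamma)\tau(\sr_\al,{}^{\sr_\al}\ochi,\gamma,\gamma')=0$ for every $\gamma'$ is exactly $T_{\sr_\al,{}^{\sr_\al}\ochi}^{\ast}(\lambda_\bfc^\ochi)=0$, which in turn says that $\lambda_\bfc^\ochi$ annihilates $\im\bigl(T_{\sr_\al,{}^{\sr_\al}\ochi}\colon I({}^{\sr_\al}\ochi)\to I(\ochi)\bigr)$. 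On the other hand, $\Theta(\oG/\oM,\ochi)$ is by construction the image of $T_{\bw_M,\ochi}$, equivalently the quotient $I(\ochi)/\ker T_{\bw_M,\ochi}$, so $\lambda_\bfc^\ochi$ descends to $\Theta(\oG/\oM,\ochi)$ iff it annihilates $\ker T_{\bw_M,\ochi}$. The proposition is therefore equivalent to the subspace equality
\[
\ker T_{\bw_M,\ochi}=\sum_{\al\in\Delta'}\im\bigl(T_{\sr_\al,{}^{\sr_\al}\ochi}\bigr).
\]

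The inclusion $\supseteq$ is the easy direction. Fix $\al\in\Delta'$. Because $\bw_M$ is the longest element of $W(\bm)$, $\sr_\al$ is a right descent of $\bw_M$, so we write $\bw_M=\bw''\sr_\al$ with $\ell(\bw'')+1=\ell(\bw_M)$. The cocycle relation for intertwining operators gives $T_{\bw_M,\ochi}=T_{\bw'',{}^{\sr_\al}\ochi}\circ T_{\sr_\al,\ochi}$, and hence
\[
T_{\bw_M,\ochi}\circ T_{\sr_\al,{}^{\sr_\al}\ochi}=T_{\bw'',{}^{\sr_\al}\ochi}\circ\bigl(T_{\sr_\al,\ochi}\circ T_{\sr_\al,{}^{\sr_\al}\ochi}\bigr).
\]
The inner rank-one composition is the standard Harish-Chandra scalar on $I({}^{\sr_\al}\ochi)$, namely $c(\sr_\al,\ochi)\cdot c(\sr_\al,{}^{\sr_\al}\ochi)$. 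The $\Delta'$-exceptionality of $\ochi$ forces $\ochi_\al=q^{-1}$, hence $({}^{\sr_\al}\ochi)_\al=q$ and $c(\sr_\al,{}^{\sr_\al}\ochi)=(1-q^{-1}\cdot q)/(1-q)=0$. The scalar vanishes, so $\im(T_{\sr_\al,{}^{\sr_\al}\ochi})\subseteq\ker T_{\bw_M,\ochi}$ as required.

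The reverse inclusion is the main obstacle. My plan is to reduce it to the Levi $\oM$ via induction in stages. Since $\Theta(\oG/\oM,\ochi)=\Ind_{\oP}^{\oG}\Theta(\oM,\ochi)$ and $I(\ochi)=\Ind_{\oP}^{\oG}I_{\oM}(\ochi)$, and both $T_{\bw_M,\ochi}$ and each $T_{\sr_\al,{}^{\sr_\al}\ochi}$ (for $\al\in\Delta'=\Delta_\oM$) are parabolically induced from their $\oM$-level counterparts, exactness of $\Ind_{\oP}^{\oG}$ reduces the problem to establishing
\[
\ker T_{\bw_M,\ochi}^{\oM}\subseteq\sum_{\al\in\Delta_\oM}\im\bigl(T_{\sr_\al,{}^{\sr_\al}\ochi}^{\oM}\bigr)
\]
inside $I_{\oM}(\ochi)$. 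This is the statement that the theta representation $\Theta(\oM,\ochi)$, realized as a quotient of $I_{\oM}(\ochi)$, is the maximal quotient killed by $T_{\sr_\al,{}^{\sr_\al}\ochi}^{\oM}$ for every simple $\al\in\Delta_\oM$ --- the Kazhdan--Patterson/Gao description of theta representations attached to exceptional parameters, which I will invoke from \cite{KP84, Gao17}. Should a clean citation prove unavailable, I would finish by a dimension count matching the codimension of both sides in $I_{\oM}(\ochi)$ with $\dim\Theta(\oM,\ochi)$, using the already-established Levi-level inclusion $\supseteq$.
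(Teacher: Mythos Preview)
Your approach is correct and is precisely the argument underlying the paper's citation: the paper's own proof reads in full ``The same proof in \cite{KP84} page 76 works here as well,'' and what you have written is an explicit unpacking of that Kazhdan--Patterson argument. The reformulation as $T_{\sr_\al,{}^{\sr_\al}\ochi}^{\ast}(\lambda_\bfc^\ochi)=0$, the reduction to the subspace identity $\ker T_{\bw_M,\ochi}=\sum_{\al\in\Delta'}\im T_{\sr_\al,{}^{\sr_\al}\ochi}$, and the induction-in-stages reduction to $\oM$ are all the right moves; on the Levi this is exactly \cite{KP84} Corollary~I.2.8 (extended to the Brylinski--Deligne setting in \cite{Gao17}), so your primary invocation is well-founded.

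Two small cautions. First, the word ``equivalent'' is a slight overreach: the subspace equality certainly \emph{implies} the proposition, but the proposition only constrains the Whittaker-functional annihilators of the two subspaces, which a priori is weaker. Since you only use the implication, this does not affect the proof. Second, your proposed fallback --- a ``dimension count matching the codimension of both sides in $I_{\oM}(\ochi)$ with $\dim\Theta(\oM,\ochi)$'' --- is not viable, as all the spaces involved are infinite-dimensional; drop this sentence and rely on the \cite{KP84,Gao17} citation, which is exactly what the paper does.
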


\begin{proof}
The same proof in \cite{KP84} page 76 works here as well.
\end{proof}

\begin{Prop}
Let $\ochi$ be an unramified $\Delta'$-exceptional character. Let $\lambda_{\bfc}^{\ochi}\in \Wh_\psi(I(\ochi))$ be the $\psi$-Whittaker functional of $I(\ochi)$ associated to some $\bfc\in \Ftn(i(\ochi))$. Then, $\lambda_{\bfc}^{\ochi}$ lies in $\Wh_{\psi}(\Theta(\oG/\oM,\ochi))$ if and only for any simple root $\al\in \Delta'$ one has
\[
\bfc(\bfs_{\sr_\al[y]})=q^{k_{y,\al}-1}
\bfgg_{\psi^{-1}}(\la y,\al^\vee\ra Q(\al^\vee))^{-1}\cdot \bfc(\bfs_y) \text{ for all }y.
\]
\end{Prop}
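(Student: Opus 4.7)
The plan is to apply the preceding proposition, which already reduces the problem to verifying
\[
\sum_{\gamma\in\oT/\oA}\bfc(\gamma)\,\tau(\sr_\al,{}^{\sr_\al}\ochi,\gamma,\gamma')=0
\qquad\text{for every }\gamma'\text{ and every }\al\in\Delta',
\]
and to unfold this equation using the explicit description of the local coefficient matrix in Theorem \ref{thm:local coefficient matrix}.

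I would fix a representative $\gamma'=\bfs_y$, and observe that by Theorem \ref{thm:local coefficient matrix}, the summand $\tau(\sr_\al,{}^{\sr_\al}\ochi,\gamma,\bfs_y)$ vanishes unless $\gamma$ lies in $\bfs_y\cdot \oA$ or in $\bfs_{\sr_\al[y]}\cdot \oA$. Using Lemma \ref{lem：local coefficient matrix cocycle} together with the transformation rule $\bfc(\ot\oz)=\bfc(\ot)\ochi(\oz)$, the summation is independent of the choice of coset representatives and collapses to at most two terms: the $\tau^1$-contribution from $\gamma=\bfs_y$ and the $\tau^2$-contribution from $\gamma=\bfs_{\sr_\al[y]}$.

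Next, I would compute these two contributions explicitly under the exceptionality hypothesis. Since $\al\in\Delta'$ and $\ochi$ is $\Delta'$-exceptional, $\ochi(\oh_\al(\varpi^{n_\al}))=q^{-1}$, hence the twisted character satisfies ${}^{\sr_\al}\ochi(\oh_\al(\varpi^{n_\al}))=q$. Substituting into the formula for $\tau^1$ gives
\[
\tau^1(\sr_\al,{}^{\sr_\al}\ochi,\bfs_y,\bfs_y)
=(1-q^{-1})\cdot\frac{q^{k_{y,\al}}}{1-q}
=-q^{k_{y,\al}-1},
\]
while $\tau^2(\sr_\al,{}^{\sr_\al}\ochi,\bfs_{\sr_\al[y]},\bfs_y)=\bfgg_{\psi^{-1}}(\la y_\rho,\al\ra Q(\al^\vee))$ (which, in the notational convention of the statement, equals $\bfgg_{\psi^{-1}}(\la y,\al^\vee\ra Q(\al^\vee))$). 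Setting the two-term sum to zero and solving for $\bfc(\bfs_{\sr_\al[y]})$ produces exactly the asserted identity, and reversing the calculation shows the converse: the stated identity for all $y$ implies that the sum over $\gamma$ vanishes for every $\gamma'$.

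The only subtle point I foresee is the degenerate case $\sr_\al[y]\equiv y\pmod{Y_{Q,n}}$, where $\bfs_y$ and $\bfs_{\sr_\al[y]}$ lie in the same $\oA$-coset and both $\tau^1$ and $\tau^2$ contribute to a single term. In that case the vanishing condition becomes $\bfc(\bfs_y)\cdot(\tau^1+\tau^2)=0$, and one has to check that this is equivalent to the stated identity once the transformation rule of $\bfc$ under $\oA$ is used to express $\bfc(\bfs_{\sr_\al[y]})$ in terms of $\bfc(\bfs_y)$ via the appropriate value of $\ochi$. This compatibility is essentially the same kind of check that appears in \cite{KP84} and is where I would spend most of the bookkeeping effort; the rest of the argument is a direct substitution.
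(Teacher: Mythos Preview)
Your proposal is correct and is precisely the argument the paper has in mind: the paper does not spell out any details but simply says that the proof of \cite{Gao17} Corollary~3.7 works without change, and what you have written is exactly that proof---apply the preceding proposition, use Theorem~\ref{thm:local coefficient matrix} to reduce the sum over $\oT/\oA$ to the two cosets $\bfs_y\oA$ and $\bfs_{\sr_\al[y]}\oA$, plug in ${}^{\sr_\al}\ochi(\oh_\al(\varpi^{n_\al}))=q$, and solve. Your handling of the degenerate case $\sr_\al[y]\equiv y\pmod{Y_{Q,n}}$ is also the right thing to flag; it is treated the same way in \cite{Gao17}.
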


\begin{proof}
The proof in \cite{Gao17} Corollary 3.7 works the same here.
\end{proof}

We now state some basic properties of these coefficients. See also \cite{Suzuki97} Sect. 3.6.

\begin{Prop}\label{prop:tau on weyl orbit}
Let $\ochi$ be an unramified $\Delta'$-exceptional character.
\begin{enumerate}
  \item $\tau(\bw_M,\ochi,y,y')=0$ unless $y'=\bw[y]$ for some $\bw \in W(\bm)$.
  \item $\tau(\bw_M,\ochi,y,\bw[y'])=R(\bw,y)\tau(\bw_M,\ochi,y,y')$ for $\bw\in W(\bm)$, where $R(\bw,y)$ is some function of $W$ and $Y$.
\end{enumerate}
\end{Prop}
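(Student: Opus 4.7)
The plan is to treat the two parts with different tools: (1) is immediate from the general vanishing of $\tau$ along a reduced decomposition, while (2) is obtained by recognizing a column of the matrix $[\tau(\bw_M,\ochi,y,y')]_{y'}$ as a Whittaker functional on the relative theta representation $\Theta(\oG/\oM,\ochi)$ and then invoking the explicit recursion that such functionals must satisfy at simple roots of $\bm$.

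For (1), I fix a reduced decomposition $\bw_M=\sr_{\al_1}\cdots\sr_{\al_N}$ of the long element of $W(\bm)$ with each $\al_i\in \Delta_M=\Delta'$. Lemma \ref{lem:calcualation of tau 1} then forces $\tau(\bw_M,\ochi,y,y')=0$ unless $y'\equiv \sr_{\al_N}^{a_N}\cdots\sr_{\al_1}^{a_1}[y] \mod Y_{Q,n}$ for some choice of $a_i\in\{0,1\}$. Since every such subword of simple reflections from $\Delta'$ lies in $W(\bm)$, we obtain (1), understood (as throughout the paper) modulo $Y_{Q,n}$.

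For (2), the first identity in Lemma \ref{lem：local coefficient matrix cocycle} shows that, with $\gamma=\bfs_y$ held fixed, the function $\bfc_y(\bfs_{y'}):=\tau(\bw_M,\ochi,y,y')$ lies in $\Ftn(i(\ochi))$. Unpacking the definition of the pulled-back map $T^{\ast}_{\bw_M,\ochi}$,
\[
T^{\ast}_{\bw_M,\ochi}\bigl(\lam_{\bfs_y}^{{}^{\bw_M}\ochi}\bigr)=\sum_{\gamma'\in \oT/\oA}\tau(\bw_M,\ochi,y,y')\,\lam_{\gamma'}^{\ochi}=\lam_{\bfc_y}^{\ochi},
\]
so this Whittaker functional factors through the intertwining operator $T_{\bw_M,\ochi}$ and hence belongs to $\Wh_{\psi}(\Theta(\oG/\oM,\ochi))$. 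Applying the preceding proposition characterizing Whittaker functionals on the relative theta representation, $\bfc_y$ must satisfy, for every $\al\in \Delta'$ and every $y'$,
\[
\tau(\bw_M,\ochi,y,\sr_\al[y'])=q^{k_{y',\al}-1}\cdot \bfgg_{\psi^{-1}}(\la y',\al^\vee\ra Q(\al^\vee))^{-1}\cdot \tau(\bw_M,\ochi,y,y').
\]
This is (2) for a simple reflection $\bw=\sr_\al$, with the scalar $R(\sr_\al,\cdot)$ read off from the right-hand side. The general case follows by writing $\bw=\sr_{\al_1}\cdots\sr_{\al_k}$ reducedly in $W(\bm)$ and iterating, producing $R(\bw,\cdot)$ as the telescoping product of simple-reflection factors evaluated along the trajectory $\sr_{\al_{j+1}}\cdots\sr_{\al_k}[y']$.

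The main delicate point is the dependence of $R$: the iteration naturally produces a scalar depending on $\bw$ and on the element $y'$ being acted upon, whereas the statement writes $R(\bw,y)$; I interpret this as meaning $R$ is a function on $W(\bm)\times Y$ (the second slot being the element fed into the action), consistent with what the derivation produces. A minor technical check, to be carried out carefully, is the behavior of the recursion on the cosets $\oT/\oA$: coincidences $\sr_{\al_j}\cdots\sr_{\al_k}[y']\equiv \sr_{\al_i}\cdots\sr_{\al_k}[y']\mod Y_{Q,n}$ along the trajectory may collapse individual factors, but they do not affect the overall identity since both sides of the simple-reflection recursion transform consistently under the $\oA$-twist supplied by Lemma \ref{lem：local coefficient matrix cocycle}.
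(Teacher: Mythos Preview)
Your proposal is correct and follows essentially the same approach as the paper: part (1) via Lemma \ref{lem:calcualation of tau 1} applied to a reduced word for $\bw_M$ in $W(\bm)$, and part (2) by recognizing $\tau(\bw_M,\ochi,y,-)$ as an element of $\Ftn(i(\ochi))$ whose associated functional lies in $\Wh_\psi(\Theta(\oG/\oM,\ochi))$, then invoking the simple-reflection recursion of the preceding proposition and iterating. Your observation about the dependence of $R$ is well taken: the recursion naturally produces a scalar depending on $\bw$ and on the argument $y'$ being acted upon, and the paper's statement (and the displayed formula in its proof) should be read that way.
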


\begin{proof}
The first one is obvious. The second one follows from the above lemma. In fact, $\tau(\ochi,\bw_M,\gamma,-)\in \Ftn(i(\ochi))$ gives rise to a functional in $\Wh_\psi(\Theta(\oG/\oM,\ochi))$. When $\bw=\sr_\al$ is a simple reflection, then
\[
\tau(\bw_M,\ochi,y,\sr_\al[y'])=q^{k_{y,\al}-1}\bfgg_{\psi^{-1}}(\la y,\al\ra Q(\al^\vee))^{-1}\tau(\bw_M,\ochi,y,y').
\]
The rest follows by induction.
\end{proof}

\begin{Cor}\label{cor:proportional}
Let $\bw \in W(\bm)$. Then $T_{\bw_M,\ochi}^{\ast}(\lambda_{y}^{{}^{\bw_M}\ochi})$ and $T_{\bw_M,\ochi}^{\ast}(\lambda_{\bw[y]}^{{}^{\bw_M}\ochi})$ are proportional on $I(\ochi)$, and
\[
T_{\bw_M,\ochi}^{\ast}(\lambda_{\bw[y]}^{{}^{\bw_M}\ochi}) =\dfrac{\tau(\bw_M,\ochi,\bw[y],y)}{\tau(\bw_M,\ochi,y,y)} T_{\bw_M,\ochi}^{\ast}(\lambda_{y}^{{}^{\bw_M}\ochi})
\]
\end{Cor}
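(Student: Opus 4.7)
The plan is to show proportionality by viewing both functionals, via the isomorphism $\Wh_\psi(I(\ochi)) \simeq \Ftn(i(\ochi))$, as functions on $\oT/\oA$ and exploiting the recursion from the preceding proposition that characterizes precisely which such functions descend to Whittaker functionals on the quotient $\Theta(\oG/\oM, \ochi)$.

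First, I would expand
\[
T^\ast_{\bw_M, \ochi}(\lambda_y^{{}^{\bw_M}\ochi}) = \sum_{\gamma' \in \oT/\oA} \tau(\bw_M, \ochi, y, \gamma') \cdot \lambda_{\gamma'}^\ochi, \qquad T^\ast_{\bw_M, \ochi}(\lambda_{\bw[y]}^{{}^{\bw_M}\ochi}) = \sum_{\gamma' \in \oT/\oA} \tau(\bw_M, \ochi, \bw[y], \gamma') \cdot \lambda_{\gamma'}^\ochi,
\]
and set $\bfc_y(\gamma') := \tau(\bw_M, \ochi, y, \gamma')$ and $\bfc_{\bw[y]}(\gamma') := \tau(\bw_M, \ochi, \bw[y], \gamma')$, both regarded as elements of $\Ftn(i(\ochi))$. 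By Proposition \ref{prop:tau on weyl orbit}(1), both are supported on the single $W(\bm)$-orbit $\{ \bw'[y] : \bw' \in W(\bm) \}$ (modulo $Y_{Q,n}$).

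Second, since $T^\ast_{\bw_M, \ochi}$ has image $\Wh_\psi(J(\bw_M, \ochi)) = \Wh_\psi(\Theta(\oG/\oM, \ochi))$, both $\lambda_{\bfc_y}^\ochi$ and $\lambda_{\bfc_{\bw[y]}}^\ochi$ lie in this space. Applying the preceding proposition, both $\bfc_y$ and $\bfc_{\bw[y]}$ satisfy
\[
\bfc(\bfs_{\sr_\al[y']}) = q^{k_{y', \al} - 1} \bfgg_{\psi^{-1}}(\la y', \al^\vee \ra Q(\al^\vee))^{-1} \cdot \bfc(\bfs_{y'})
\]
for every $\al \in \Delta'$ and every $y'$. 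Since Gauss sums $\bfgg_{\psi^{-1}}$ never vanish, this recursion propagates any single value uniquely (and nonvanishingly) along the entire $W(\bm)$-orbit of $y$. Hence $\bfc_{\bw[y]}$ and $\bfc_y$ are scalar multiples of one another on the orbit, and the scalar is fixed by comparing values at any one point of the orbit.

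Evaluating at $\gamma' = \bfs_y$ yields precisely the scalar $\tau(\bw_M, \ochi, \bw[y], y)/\tau(\bw_M, \ochi, y, y)$, and summing the identity $\bfc_{\bw[y]}(\gamma') = \bigl(\tau(\bw_M, \ochi, \bw[y], y)/\tau(\bw_M, \ochi, y, y)\bigr)\,\bfc_y(\gamma')$ weighted by $\lambda_{\gamma'}^\ochi$ gives the desired identity of Whittaker functionals. The one subtlety I anticipate is the nonvanishing of the denominator $\tau(\bw_M, \ochi, y, y)$: if it vanished, the recursion would force $\bfc_y \equiv 0$ on the orbit and hence $T^\ast_{\bw_M, \ochi}(\lambda_y^{{}^{\bw_M}\ochi}) = 0$, in which case the statement of proportionality holds in the degenerate sense and the stated formula should be read accordingly. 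Beyond this cosmetic issue the proof is essentially formal, relying only on the recursion machinery already set up.
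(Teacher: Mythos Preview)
Your proof is correct and follows essentially the same approach as the paper. The paper's proof invokes Proposition~\ref{prop:tau on weyl orbit}(2) directly: the factor $R(\tilde\bw,\cdot)$ relating $\tau(\bw_M,\ochi,\,\cdot\,,\tilde\bw[y])$ to $\tau(\bw_M,\ochi,\,\cdot\,,y)$ is independent of the first argument, so the two expansions are proportional with the stated ratio; you arrive at the same conclusion by going one step further back to the recursion characterizing $\Wh_\psi(\Theta(\oG/\oM,\ochi))$, which is exactly how Proposition~\ref{prop:tau on weyl orbit}(2) itself was proved.
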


\begin{proof}
This is an immediate consequence of Proposition \ref{prop:tau on weyl orbit}. In fact,
\[
T_{\bw_M,\ochi}^{\ast}(\lambda_{y}^{{}^{\bw_M}\ochi}) =\sum_{y'\in Y/Y_{Q,n}}\tau(\bw_M,\ochi,y,y') \lam_{y'}^{\ochi} =\sum_{\tilde \bw\in W(\bm) }R(\tilde\bw,y)\cdot \tau(\bw_M,\ochi,y,y) \lam_{\tilde\bw[y']}^{\ochi}
\]
and similarly
\[
T_{\bw_M,\ochi}^{\ast}(\lambda_{\bw[y]}^{{}^{\bw_M}\ochi}) =\sum_{y'\in Y/Y_{Q,n}}\tau(\bw_M,\ochi,\bw[y],y') \lam_{y'}^{\ochi}
=\sum_{\tilde \bw\in W(\bm) }R(\tilde\bw,y)\cdot \tau(\bw_M,\ochi,\bw[y],y) \lam_{\tilde\bw[y']}^{\ochi}.
\]
This gives the desired result.
\end{proof}

\subsection{Rodier's lemma}\label{sec:rodier}
We end this section with a generalization of a lemma of Rodier. This will be useful later. Recall that Rodier's result says that when the inducing data is generic, then so is the induced representation.

\begin{Prop}\label{prop:rodier}
The representation $\Ind_{\oP}^{\oG} (\pi)$ is generic if and only if $\pi$ is generic. Moreover,
\[
\dim \Hom_{U}(\Ind_{\oP}^{\oG} (\pi),\psi) =\dim \Hom_{U_M}(\pi,\psi|_{U_M}).
\]
\end{Prop}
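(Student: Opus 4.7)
The plan is to adapt Rodier's classical argument for linear reductive groups, using that the unipotent radical $U$ splits canonically in $\oG$ (Sect.~\ref{sec:preliminaries}). This splitting means that $\psi:U\to\bc^\times$ lifts unambiguously, and that the geometry of $U$-orbits on $\oP\bs\oG$ is identical to the geometry in the linear case. In particular, the Bernstein--Zelevinsky geometric lemma applied to the Bruhat stratification
\[
\oG=\bigsqcup_{\bw\in W^M}\oP\,w\,\oB
\]
yields a filtration of $\Ind_{\oP}^{\oG}\pi$ by $U$-stable subspaces whose graded pieces correspond to the individual cells.

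First, I would apply $\Hom_U(-,\psi)$ to this filtration and use Frobenius reciprocity to identify the contribution of the cell indexed by $\bw\in W^M$ with a $\Hom$-space of the form
\[
\Hom_{\,wUw^{-1}\cap \oM}\bigl(\pi,\,\psi_\bw\bigr),
\]
where $\psi_\bw$ is the character obtained by conjugating $\psi$ by $w$ and restricting to the indicated subgroup of $\oM$. The canonical unipotent splitting ensures that each such $\psi_\bw$ is well-defined on the nose, independently of any choice of lift of $w$.

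Second, I would show that only the cell corresponding to $\bw^M$, the longest element of $W^M$, contributes, and that its contribution equals $\Hom_{U_M}(\pi,\psi|_{U_M})$. For $\bw=\bw^M$ one checks directly that $wUw^{-1}\cap \oM=U_M$ (since $\bw^M$ permutes $\Phi_M^+$) and that $\psi_{\bw^M}$ restricts to $\psi|_{U_M}$ on simple root subgroups in $\Delta_M$. For $\bw\in W^M$ with $\bw\neq \bw^M$, there must exist $\al\in\Delta-\Delta_M$ with $\bw^{-1}(\al)>0$; the corresponding root subgroup $\bu_{\bw^{-1}(\al)}\subset U$ lies in the kernel of the projection to $wUw^{-1}\cap\oM$, yet $\psi$ is nontrivial on it, so this cell's contribution is annihilated.

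The main obstacle I anticipate is the careful bookkeeping of characters $\psi_\bw$ along each cell, in particular verifying that the conjugation by a representative $w\in\oG$ of $\bw$ acts on $U$ exactly as in the linear case, so that the vanishing argument reduces to the standard nontriviality of $\psi$ on a single simple root subgroup. Once this is granted, the argument is essentially formal; in particular, since only one stratum contributes, we obtain an equality of dimensions rather than just an inequality.
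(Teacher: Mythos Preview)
Your proposal is correct and follows the same route as the paper: the paper's proof is a two-line citation of \cite{BZ77} Theorem~5.2 (the geometric lemma) and \cite{CS80} Lemma~1.5, and what you have sketched is precisely the content of those references adapted to the covering setting via the canonical unipotent splitting. One small point of bookkeeping to watch: the claim that $\bw^M$ ``permutes $\Phi_M^+$'' is not quite right as stated (elements of $W^M$ satisfy $\bw^{-1}(\Phi_M^+)\subset\Phi^+$, which is a different condition), so when you carry this out you should verify directly which cell is open and that its stabilizer data matches $(U_M,\psi|_{U_M})$; this is routine but the indexing deserves care.
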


\begin{proof}
This follows from \cite{BZ77} Theorem 5.2 and \cite{CS80} Lemma 1.5.
\end{proof}

\section{The case of general linear groups}\label{sec:general linear}

From now on, we focus on the case of $\bg=\bgl_{r+1}$. We now introduce some notations in this setup. Write $\Delta=\{\al_1,\cdots,\al_r\}$ with the standard enumeration and the Weyl group is generated by $\{\sr_1,\cdots,\sr_r\}$. The root system is simply-laced, and we write $n_Q=n_\al$ for any $\al\in \Delta$. For $\al=\al_i+\cdots+\al_{j-1}$, write $\ochi_{ij}=\ochi_\al$.

\subsection{Inductive formula}

The inductive formula in Proposition \ref{prop:inductive formula} admits a refinement in the case of $\bm=\bgl_{r}\times \bgl_1$ in $\bgl_{r+1}$.. This is similar to \cite{Suzuki97} Lemma 4.1. In this case, the $W^M$ is $\{ \sr_r,\ \sr_{r}\sr_{r-1},\ \cdots,\ \sr_r\cdots \sr_1 \}$.

Recall that $\cw_y(y',\ochi)\neq 0$ if and only if $y$ and $\bw_G(y')$ lies in the same orbit under the Weyl group action. (Note that this is not $\bw_G[y']$.)  We now assume that $\bw_G(y')=\bw'[y]$ for some $\bw'\in W$. Note that this is true identity instead of $\mod Y_{Q,n}$. Any $\bw'\in W$ can be uniquely written as $\bw'=\bw \sr_r\cdots \sr_{r_0}$ for an integer $r_0$ and $\bw \in W(\bm)$. We have arrived at
\[
\bw_G(y') = \bw \sr_r\cdots \sr_{r_0} [y]
\]
with $\bw\in W(\bm)$.

\begin{Prop}\label{prop:inductive 1 in general linear}
Assume that the orbit of $y$ under $W$ is free.
We have
\[
\cw_{y}(y',\ochi)=
\sum_{i=1}^{r_0}\sum_{y''} \tau(\sr_i\cdots\sr_r,{}^{\sr_r\cdots \sr_i}\ochi,y,y'') \cw_{M,y''}(\bw^{M}(y'),{}^{\sr_r\cdots \sr_i}\ochi) \left(\prod_{j=1}^{i}\dfrac{1-\ochi_{ji}q^{-1}}{1-\ochi_{ji}} \right),
\]
where the second sum is over the set
\[
\begin{aligned}
&\left\{
\sr_r\cdots \sr_{r_0} \sr_{r_0-2}^{a_{r_0-2}}\cdots \sr_i^{a_i}[y]: a_i,\cdots,a_{r_0-2}\in \{0,1\}
\right\}\\
=&\left\{
\sr_{r_0-2}^{a_{r_0-2}}\cdots \sr_i^{a_i}\bw^{-1}[\bw_G(y')]: a_i,\cdots,a_{r_0-2}\in \{0,1\}
\right\}.\\
\end{aligned}
\]
\end{Prop}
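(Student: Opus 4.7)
The plan is to specialize Proposition \ref{prop:inductive formula} to $\bm = \bgl_r \times \bgl_1$ and then identify which terms in the resulting double sum can be nonzero. First I would parametrize $W^M = \{\sr_r \sr_{r-1}\cdots \sr_i : 1 \le i \le r+1\}$, with $i=r+1$ giving the identity and $\bw^M = \sr_r \sr_{r-1}\cdots \sr_1$. For $\bw_2 = \sr_r \cdots \sr_i$, a telescoping cancellation gives $\bw^{M,-1}\bw_2 = \sr_1 \sr_2 \cdots \sr_{i-1}$, whose inversion set, read off from the reduced expression, is $\{\al_j+\al_{j+1}+\cdots+\al_{i-1} : 1 \le j \le i-1\}$. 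Substituting into the Gindikin-Karpelevich product of Proposition \ref{prop:inductive formula} yields the factor $\prod_{j=1}^{i-1}\frac{1-\ochi_{ji}q^{-1}}{1-\ochi_{ji}}$ (which matches the stated product up to what appears to be a typo in its upper limit).

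Next I would analyse the support of the inner sum over $\gamma' \in \oT/\oA$. The term for a given $y''$ is nonzero only if two vanishing conditions both fail. The first, from Lemma \ref{lem:calcualation of tau 1}, is that $\tau(\sr_i\cdots\sr_r, {}^{\sr_r\cdots\sr_i}\ochi, y, y'')$ vanishes unless $y''\equiv \sr_r^{a_r}\cdots\sr_i^{a_i}[y]\pmod{Y_{Q,n}}$ for some $(a_r,\ldots,a_i)\in\{0,1\}^{r-i+1}$. The second, from the corollary following Lemma \ref{lem:calcualation of tau 1} applied to $\oM$, is that $\cw_{M,y''}(\bw^M(y'),\cdot)$ vanishes unless $y''$ lies in the $W(\bm)$-orbit (shifted action, modulo $Y_{Q,n}$) of $\bw_M\bw^M(y')=\bw_G(y')$. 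The hypothesis $\bw_G(y') = \bw\sr_r\sr_{r-1}\cdots\sr_{r_0}[y]$ with $\bw\in W(\bm)$ then rewrites the second condition as $y''\equiv \bw_3\sr_r\sr_{r-1}\cdots\sr_{r_0}[y]\pmod{Y_{Q,n}}$ for some $\bw_3 \in W(\bm)$. Combining these and invoking the free-orbit hypothesis together with Lemma \ref{lem:several lattices} lets me lift the congruences to an honest equation in $W$: $\sr_r^{a_r}\cdots\sr_i^{a_i}$ must lie in the left coset $W(\bm)\cdot\sr_r\sr_{r-1}\cdots\sr_{r_0}$.

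The final combinatorial step identifies the allowed tuples $(a_r,\ldots,a_i)$. Writing $W(\bm)\bs W = S_r\bs S_{r+1}$ and parametrizing cosets by $w\mapsto w^{-1}(r+1)\in\{1,\ldots,r+1\}$, the coset of $\sr_r\sr_{r-1}\cdots\sr_{r_0}$ corresponds to the value $r_0$. Tracking the image of $r+1$ under $(\sr_r^{a_r}\cdots\sr_i^{a_i})^{-1}=\sr_i^{a_i}\cdots\sr_r^{a_r}$ applied right to left, only the leftmost factor $\sr_r^{a_r}$ can initially displace $r+1$ (to $r$, provided $a_r=1$), after which successive reflections $\sr_j$ with decreasing $j$ can decrement the running value by one precisely at steps with $a_j=1$, until the first $a_j=0$ halts the descent. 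Hence the final value equals $r_0$ iff $a_r=\cdots=a_{r_0}=1$, $a_{r_0-1}=0$ whenever $r_0>i$, and $a_{r_0-2},\ldots,a_i$ are arbitrary; this forces $i\le r_0$, yielding the outer range of summation, and substituting back gives $y''=\sr_r\sr_{r-1}\cdots\sr_{r_0}\sr_{r_0-2}^{a_{r_0-2}}\cdots\sr_i^{a_i}[y]$ as displayed. The main obstacle throughout is this coset-trajectory analysis, in particular justifying why only the ``leading run of ones terminated by a zero'' pattern can produce the desired coset; once it is in place, the formula follows by substitution into Proposition \ref{prop:inductive formula}.
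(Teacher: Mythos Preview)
Your proposal is correct and follows essentially the same route as the paper's proof: specialize Proposition~\ref{prop:inductive formula} to $\bm=\bgl_r\times\bgl_1$, compute $\bw^{M,-1}\bw_2=\sr_1\cdots\sr_{i-1}$ and its inversion set, then combine the support constraints from Lemma~\ref{lem:calcualation of tau 1} (for $\tau$) and Theorem~\ref{thm:unramified whittaker function} (for $\cw_M$) with the free-orbit hypothesis to force an equality in $W$ and analyze it via the quotient $W(\bm)\backslash W$. Your tracking of $w^{-1}(r+1)$ is simply a concrete realization of the paper's phrase ``by considering the images of both sides in $W(\bm)\backslash W$,'' and it makes explicit the part the paper leaves terse---namely that $a_r=\cdots=a_{r_0}=1$, not only $a_{r_0-1}=0$. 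You are also right that the upper limit in the displayed product should be $i-1$; the paper carries the same typo in both the statement and the proof.
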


This result is probably true in general. But we only prove what we need here.

\begin{proof}
Note $\bw^M=\sr_r\cdots \sr_1$.
If $\bw_2=\sr_r\cdots \sr_i$, then $\bw^{M,-1}\bw_2=\sr_1\cdots \sr_{i-1}$.
Thus,
\[
\{\al>0:\bw^{M,-1}\bw_2(\al)<0\}=
\{
\al_{i-1}, \ \al_{i-2}+\al_{i-1},\cdots, \al_1+\cdots + \al_{i-1}
\}.
\]
In this case,
\[
\prod_{\al>0:\bw^{M,-1}\bw_2(\al)<0} \dfrac{1-\ochi_\al q^{-1}}{1-\ochi_\al} =\prod_{j=1}^{i}\dfrac{1-\ochi_{ji}q^{-1}}{1-\ochi_{ji}}.
\]
We may rewrite the formula as
\[
\begin{aligned}
&\cw_y(y',\ochi)\\
=&\sum_{i=1}^r\sum_{y''\in Y/Y_{Q,n}} \tau(\sr_i\cdots\sr_r,{}^{\sr_r\cdots \sr_i}\ochi,y,y'') \cw_{M,y''}(\bw^{M}(y'),{}^{\sr_r\cdots \sr_i}\ochi) \left(\prod_{j=1}^{i}\dfrac{1-\ochi_{ji}q^{-1}}{1-\ochi_{ji}} \right).
\end{aligned}
\]
We now analyze when both $\tau(\sr_i\cdots\sr_r,{}^{\sr_r\cdots \sr_i}\ochi,y,y'')$ and $\cw_{M,y''}(\bw^{M}(y'),{}^{\bw_2}\ochi)$ are nonzero.

We know that $\bw_G(y')=\bw \sr_r\cdots \sr_{r_0} [y]$ with $\bw\in W(\bm)$. If $\cw_{M,y''}(\bw^{M}(y'),{}^{\bw_2}\ochi) \neq 0$, then for some $y''\in Y$ and $\bw' \in W$,
\[
y''\equiv \bw'[\bw_G(y')] \mod Y_{Q,n}.
\]
This condition implies that
\[
y''\equiv \bw''\sr_r\cdots \sr_{r_0} [y] \mod Y_{Q,n}
\]
for some $\bw''\in W(\bm)$. 

If $\tau(\sr_i\cdots\sr_r,{}^{\sr_r\cdots \sr_i}\ochi,y,y'')\neq 0$, then
\[
\sr_r^{a_r}\cdots \sr_i^{a_i}[y]\equiv y'' \mod Y_{Q,n}
\]
for some $a_i,\cdots,a_r\in \{0,1\}$.

We now use the assumption that the orbit of $y$ is free. This implies that
\begin{equation}\label{eq:equality of Weyl}
\sr_r^{a_r}\cdots \sr_i^{a_i}=\bw''\sr_r\cdots \sr_{r_0}
\end{equation}
for some $a_i,\cdots,a_r\in \{0,1\}$ and $\bw''\in W(\bm)$.
By considering the images of both sides in $W(\bm)\bs W$, we know that this is possible only when $1\leq i\leq r_0$. The same argument shows that we must have $a_{r_0-1}=0$. Thus we conclude that elements in \eqref{eq:equality of Weyl} is of the form
\[
\sr_r\cdots \sr_{r_0} \sr_{r_0-2}^{a_{r_0-2}}\cdots \sr_i^{a_i}.
\]
So finally, we have arrived at
\[
\cw_y(y',\ochi)=\sum_{i=1}^{r_0}\sum_{y''} \tau(\sr_i\cdots\sr_r,{}^{\sr_r\cdots \sr_i}\ochi,y,y'') \cw_{M,y''}(\bw^M(y'),{}^{\sr_r\cdots \sr_i}\ochi) \left(\prod_{j=1}^{i}\dfrac{1-\ochi_{ji}q^{-1}}{1-\ochi_{ji}} \right)
\]
where the second sum is over the set
\[
\begin{aligned}
&\left\{
\sr_r\cdots \sr_{r_0} \sr_{r_0-2}^{a_{r_0-2}}\cdots \sr_i^{a_i}[y]\mid a_i,\cdots,a_{r_0-2}\in \{0,1\}
\right\}\\
=&\left\{
\sr_{r_0-2}^{a_{r_0-2}}\cdots \sr_i^{a_i}\bw^{-1}[\bw_G(y')]\mid a_i,\cdots,a_{r_0-2}\in \{0,1\}
\right\}.\\
\end{aligned}
\]
\end{proof}

We now discuss the covering group obtained by
\[
\bgl_r\hookrightarrow \bgl_{r+1},\qquad g\mapsto \mathrm{diag}(g,1).
\]
Write $Y=Y_1\oplus Y_2$ where $Y_1$ (resp. $Y_2$) be the cocharacter lattice of $\bgl_r$ (resp. $\bgl_1$). Then we have embeddings $\iota:Y_1\hookrightarrow Y$ and $\iota:Y_1^{sc}\hookrightarrow Y^{sc}$. The cover of $\overline{\GL}_r$ is associated with $(D|_{Y'},\eta|_{Y_1^{sc}})$. We also observe that for $y\in Y_1$, $\bfs_{\bgl_r,\iota(y)}=\bfs_{\bgl_{r-1},y}$. Thus, the notation $\bfs_y$ as this does not arise any confusion.

\begin{Lem}\label{lem:m to gl on w}
Let
\begin{itemize}
  \item $y=(y_1,y_2), y'=(y'_1,y'_2)$ with $y_1,y_1'\in Y_1$ and $y_2,y_2'\in Y_2$,
  \item $\ochi_1$ be an unramified character for $\overline{\GL_r}$ such that $\ochi_{1}|_{Y_{1,Q,n}^{sc}}=\ochi|_{Y_{Q,n}^{sc}}$.
\end{itemize}
If $y_2=y'_2$, then $\cw_{M,y}(y',\ochi)=\cw_{\GL_r,y_1}(y'_1,\ochi_1)$.
\end{Lem}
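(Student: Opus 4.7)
The plan is to expand both Whittaker functions via Theorem \ref{thm:unramified whittaker function} and match the summands under the natural identification $W(\bm) \cong W(\overline{\GL_r})$, which holds because $W(\bgl_1)$ is trivial. Writing $\bw = (\bw_1, 1) \in W(\bm)$ under this identification, the long element $\bw_M$ of $W(\bm)$ corresponds to the long element of $W(\overline{\GL_r})$, and the hypothesis $y_2 = y'_2$ together with the fact that $\bw_M$ acts trivially on $Y_2$ gives $\bw_M(y') = (\bw_M(y'_1), y_2)$.

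Next, I check that the Gindikin--Karpelevich factors agree term-by-term, i.e.\ $c_M(\bw_M\bw, \ochi) = c_{\GL_r}(\bw_M\bw_1, \ochi_1)$. Since $\bgl_1$ contributes no roots, $\Phi_M^+ = \Phi_{\GL_r}^+$, and for any $\al \in \Phi_{\GL_r}^+$ one has $\al^\vee \in Y_1^{sc}$ so that $n_\al \al^\vee \in Y_{1, Q, n}^{sc} \subseteq Y_{Q, n}^{sc}$. The value $\ochi_\al = \ochi(\oh_\al(\varpi^{n_\al}))$ is therefore determined by $\ochi|_{Y_{1, Q, n}^{sc}}$, which equals $(\ochi_1)_\al$ by the hypothesis on $\ochi_1$.

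For the local coefficient matrix entries I apply Lemma \ref{lem:10} to the decomposition $\bw^{-1} = (\bw_1^{-1}, 1)$. If $\bw_M(y'_1)$ does not lie in the $W(\overline{\GL_r})$-orbit of $y_1$ modulo $Y_{1, Q, n}$, then by Lemma \ref{lem:calcualation of tau 1} all the relevant tau coefficients, and hence both sides of the claimed equality, vanish. Otherwise, write $\bw_M(y'_1) = \bw'_1[y_1]$ for some $\bw'_1 \in W(\overline{\GL_r})$, and note $y_2 = 1 \cdot [y_2]$; then the hypotheses of Lemma \ref{lem:10} are met with $\bw' = (\bw'_1, 1)$, yielding
\[
\tau(\bw^{-1}, {}^\bw\ochi, y, \bw_M(y')) = \tau(\bw_1^{-1}, {}^{\bw_1}\ochi_1, y_1, \bw_M(y'_1)) \cdot \tau(1, \ochi_2, y_2, y_2),
\]
where $\ochi_2$ is any character on the $\bgl_1$ factor compatible with $\ochi$. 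Since $T^*_{1, \ochi_2}$ is the identity operator, the second factor equals $1$, which completes the term-by-term matching.

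The main bookkeeping obstacle is to verify the lattice compatibility $Y_{1, Q, n}^{sc} \subseteq Y_{Q, n}^{sc}$ (which follows from the restriction of $Q$ to $Y_1$), so that the hypothesis $\ochi_1|_{Y_{1,Q,n}^{sc}} = \ochi|_{Y_{Q,n}^{sc}}$ can be correctly invoked both for the GK factors and for applying Lemma \ref{lem:10}. Once this compatibility is in place, the remainder is essentially routine bookkeeping.
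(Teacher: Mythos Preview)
Your proposal is correct and follows essentially the same approach as the paper: expand both sides via Theorem~\ref{thm:unramified whittaker function}, match the Gindikin--Karpelevich factors using that $\Phi_M^+=\Phi_{\GL_r}^+$, and invoke Lemma~\ref{lem:10} to identify the $\tau$-coefficients. Your version is in fact more careful than the paper's, which simply applies Lemma~\ref{lem:10} without separately treating the case where $\bw_M(y'_1)$ lies outside the $W(\overline{\GL_r})$-orbit of $y_1$ and without explicitly disposing of the trivial $\bgl_1$-factor.
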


\begin{proof}
Recall that
\[
\cw_{M,y}(y',\ochi)=\sum_{\bw\in W(M)} c_M(\bw_M \bw,\ochi) \cdot \tau_M(\bw^{-1},{}^{\bw}\ochi,y,\bw_M(y')),
\]
It is straightforward to see that $c_M(\bw_M \bw,\ochi)=c_{\GL_r}(\bw_M \bw,\ochi_1)$.
By Lemma \ref{lem:10},
\[
\tau_M(\bw^{-1},{}^{\bw}\ochi,y,\bw_M(y')) =\tau_{\GL_r}(\bw^{-1},{}^{\bw}\ochi_1,y_1,\bw_{\GL_r}(y_1')).
\]
(Note that the condition $y_2=y_2'$ does not appear explicitly in the proof but must be satisfied.)
Therefore, $\cw_{M,y}(y',\ochi)=\cw_{\GL_r,y_1}(y'_1,\ochi_1)$.
\end{proof}

\subsection{Relative theta representations}

We now discuss the Whittaker models for the relative theta representations. In particular, we determine when these representations are non-generic and possess a unique Whittaker model. The main ingredient here is \cite{Gao17} Theorem 1.1, which is a generalization of \cite{KP84} Theorem I.3.5.

The root system spanned by $\Delta'$ is of type $A_{r_1-1}\times \cdots \times A_{r_k-1}$, where $r_1+\cdots+r_k=r$. In this way, we obtain a bijection between subsets of $\Delta$ and ordered partitions $(r_1\cdots r_k)$ of $r$. The following result can be proved along the same line as in \cite{Gao17}.

\begin{Thm}
\begin{enumerate}
\item If $r_i>n_Q$ for some $i$, then the representation $\Theta(\oM,\ochi)$ is non-generic.
\item If $r_i\leq n_Q$ for all $i$, then the representation $\Theta(\oM,\ochi)$ is generic.
\item If $r_i=n_Q$ for all $i$, then the representation $\Theta(\oM,\ochi)$ has a unique Whittaker model.
\end{enumerate}
\end{Thm}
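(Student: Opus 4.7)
The plan is to reduce the three statements to the single-factor case $\bm = \bgl_r$ and then invoke the Kazhdan--Patterson / Gao classification of Whittaker functionals on theta representations of metaplectic general linear covers. Since $\bm = \bgl_{r_1} \times \cdots \times \bgl_{r_k}$ is a direct product and the long element $\bw_M$ is the product of the long elements of each factor, Lemma~\ref{lem:10} shows that the local coefficient matrix for $T_{\bw_M,\ochi}$ factorizes. Consequently $\Theta(\oM,\ochi)$ decomposes as a (genuine) tensor product of theta representations $\Theta(\overline{\bgl}_{r_i},\ochi_i)$ on the individual factors, and since $\psi|_{U_M}$ restricts to a generic character on each block, a Whittaker functional on the product exists (resp.\ is unique) if and only if one does (resp.\ is) on each factor. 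This reduces all three statements to the claim: for $\bgl_r$, the representation $\Theta(\overline{\bgl}_r,\ochi)$ is non-generic when $r > n_Q$, generic when $r \le n_Q$, and has a unique Whittaker model when $r = n_Q$.

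For the single-factor case I would apply the Proposition following Definition~\ref{def:exceptional} with $\bg = \bm = \bgl_r$: a function $\bfc \in \Ftn(i(\ochi))$ determines a functional in $\Wh_\psi(\Theta(\overline{\bgl}_r,\ochi))$ if and only if for every $\al \in \Delta$ and every $y \in Y$,
\[
\bfc(\bfs_{\sr_\al[y]}) \;=\; q^{k_{y,\al}-1}\,\bfgg_{\psi^{-1}}(\la y,\al^\vee\ra Q(\al^\vee))^{-1}\,\bfc(\bfs_y).
\]
Hence $\dim \Wh_\psi(\Theta(\overline{\bgl}_r,\ochi))$ equals the number of $W$-orbits (for the shifted action $\bw[y]$) in $Y/Y_{Q,n}$ on which this propagation rule is self-consistent. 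The key assertion is that self-consistency holds precisely on the \emph{free} orbits. On a free orbit, one picks a representative $y_0$, sets $\bfc(\bfs_{y_0}) = 1$, and extends via the rule along any reduced word; freeness guarantees well-definedness. On a non-free orbit there exist $y$ and $\al \in \Delta$ with $\sr_\al[y] \equiv y \pmod{Y_{Q,n}}$; writing $z = \sr_\al[y] - y \in Y_{Q,n}$, one has $\bfs_{\sr_\al[y]} = \bfs_y \cdot \bfs_z$ (since $(\varpi,\varpi)_n = 1$), and because $\bfs_z \in \oA$ the rule collapses to the scalar identity
\[
\ochi(\bfs_z) \;=\; q^{k_{y,\al}-1}\,\bfgg_{\psi^{-1}}(\la y,\al^\vee\ra Q(\al^\vee))^{-1}.
\]
The main combinatorial step is to check that this identity fails under the exceptional normalization $\ochi(\oh_\al(\varpi^{n_\al})) = q^{-1}$, whereupon $\bfc(\bfs_y) = 0$ and $\bfc \equiv 0$ on the whole orbit.

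It then remains to count free orbits. Identifying $Y \simeq \bz^r$ via the standard cocharacter basis, $\rho = (\tfrac{r-1}{2}, \tfrac{r-3}{2}, \dots, -\tfrac{r-1}{2})$, and $W = S_r$ acting by coordinate permutations, the shifted action permutes the coordinates of $y_\rho = y - \rho$. Since $Y_{Q,n}$ contains $n_Q \bz^r$ up to the diagonal adjustment coming from the center of $\bgl_r$, the orbit of $y$ is free in $Y/Y_{Q,n}$ if and only if the coordinates of $y_\rho$ are pairwise distinct modulo $n_Q$. Such an $r$-tuple exists iff $r \le n_Q$, yielding (2); it fails to exist when $r > n_Q$, yielding (1); and when $r = n_Q$ every such tuple is a permutation of $\{0, 1, \dots, n_Q - 1\}$ modulo translations by $Y_{Q,n}$, producing exactly one free orbit and hence the unique Whittaker model of (3). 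The main obstacle is the verification that the propagation cocycle is genuinely obstructed on non-free orbits, i.e., that the scalar identity above really fails; this is the delicate interplay between Hilbert symbols, Gauss sums, and the value $q^{-1}$ of the exceptional character, and is precisely where the argument of \cite{KP84} Lemma~I.3.3 together with its Brylinski--Deligne extension in \cite{Gao17} is invoked.
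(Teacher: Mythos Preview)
Your overall strategy matches what the paper invokes by citation (\cite{Gao17} Example~3.16 and \cite{KP84} Corollary~I.3.6): characterize $\Wh_\psi(\Theta)$ via the propagation rule on $\bfc$-functions, observe that the dimension equals the number of free $W(\bm)$-orbits in $Y/Y_{Q,n}$ on which the rule is consistent, and then do the orbit count block by block. The single-factor analysis you outline is essentially correct.

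There is, however, a genuine gap in your reduction step. You deduce from Lemma~\ref{lem:10} that ``$\Theta(\oM,\ochi)$ decomposes as a (genuine) tensor product of theta representations $\Theta(\overline{\bgl}_{r_i},\ochi_i)$,'' and then use this to factor the Whittaker dimension. But Lemma~\ref{lem:10} only factorizes certain local-coefficient entries $\tau(\bw,\ochi,y,\bw'[y])$; it says nothing about a representation-theoretic decomposition. For covering groups the preimage $\oM$ of $M_1\times\cdots\times M_k$ is \emph{not} the direct product $\oM_1\times\cdots\times\oM_k$: the blocks commute only up to Hilbert symbols, and making sense of ``genuine tensor product'' requires the metaplectic tensor product machinery, which you have not invoked. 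So the implication ``$\tau$ factorizes $\Rightarrow$ $\Theta(\oM,\ochi)$ is a tensor product $\Rightarrow$ $\dim\Wh_\psi$ factorizes'' is unjustified as written.

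The fix is to skip the tensor-product detour entirely and run your single-factor argument directly on $\oM$. The Proposition you cite already applies to $\oM$ (with $\Delta'$ in place of $\Delta$), and since each $\al\in\Delta'$ lies in a single block, the propagation conditions and the $W(\bm)$-orbit structure on $Y/Y_{Q,n}$ visibly factor through the blocks at the level of $\bfc$-functions, without ever needing a tensor decomposition of the representation itself. Concretely: a $W(\bm)$-orbit is free iff each $W(\bm_i)$-orbit on the $i$-th block of $y_\rho$ is free, and the consistency obstruction on a non-free orbit is detected by a single simple root in some block. One last caution: in the uniqueness count for $r_i=n_Q$ you need the actual structure of $Y_{Q,n}$ (not just $Y_{Q,n}^{sc}$), which depends on the bisector $D$; this is exactly what is worked out in \cite{Gao17} Example~3.16, so you should point there rather than assert it.
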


\begin{proof}
The proof in \cite{Gao17} Example 3.16 and \cite{KP84} Corollary I.3.6 applies without essential change.
\end{proof}

By combining this result with Proposition \ref{prop:rodier}, we deduce the following result, in analogy with \cite{Suzuki98} Corollary 3.3.

\begin{Thm}\label{thm:whittaker for relative theta}
\begin{enumerate}
\item If $r_i>n_Q$ for some $i$, then the representation $\Theta(\oG/\oM,\ochi)$ is non-generic.
\item If $r_i\leq n_Q$ for all $i$, then the representation $\Theta(\oG/\oM,\ochi)$ is generic.
\item If $r_i=n_Q$ for all $i$, then the representation $\Theta(\oG/\oM,\ochi)$ has a unique Whittaker model.
\end{enumerate}
\end{Thm}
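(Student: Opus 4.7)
The proof plan is to reduce the statement for $\Theta(\oG/\oM,\ochi)$ to the corresponding statement for $\Theta(\oM,\ochi)$ (the preceding theorem) via the generalization of Rodier's lemma (Proposition \ref{prop:rodier}). This is possible because, by the very definition in Sect.~\ref{sec:relative theta}, we have
\[
\Theta(\oG/\oM,\ochi)=\Ind_{\oP}^{\oG}\Theta(\oM,\ochi),
\]
so Proposition \ref{prop:rodier} is directly applicable with $\pi=\Theta(\oM,\ochi)$.

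First I would record the dimension identity
\[
\dim \Hom_{U}(\Theta(\oG/\oM,\ochi),\psi)=\dim \Hom_{U_M}(\Theta(\oM,\ochi),\psi|_{U_M})
\]
provided by Proposition \ref{prop:rodier}. For part (1), when some $r_i>n_Q$, the preceding theorem gives $\dim \Hom_{U_M}(\Theta(\oM,\ochi),\psi|_{U_M})=0$, so the left-hand side vanishes and $\Theta(\oG/\oM,\ochi)$ is non-generic. For part (2), when $r_i\leq n_Q$ for all $i$, the preceding theorem yields a nonzero Whittaker functional on $\Theta(\oM,\ochi)$, so the identity forces a nonzero Whittaker functional on $\Theta(\oG/\oM,\ochi)$. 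For part (3), when $r_i=n_Q$ for all $i$, the preceding theorem says the right-hand side equals $1$; hence $\Theta(\oG/\oM,\ochi)$ has a one-dimensional space of Whittaker functionals.

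The only thing that needs a brief check is that $\psi|_{U_M}$ is still a non-degenerate character of $U_M$, which is immediate since $\Delta'\subset \Delta$ and $\psi$ is non-degenerate on every $U_\al$ for $\al\in\Delta$; in particular it remains non-degenerate on each simple root subgroup of $M$, so the notion of ``generic'' on $\oM$ used in the preceding theorem matches the notion used here. With this observation, all three claims follow without further work. The main ``obstacle'' here is really a bookkeeping one: Proposition \ref{prop:rodier} is stated for parabolic induction and the character $\psi$, and the Jacquet–Rodier argument from \cite{BZ77,CS80} carries over to the Brylinski–Deligne cover because $U$ splits canonically in $\oG$; we just need to note that nothing in that argument interacts with the $\mu_n$-factor.

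\begin{proof}
By definition $\Theta(\oG/\oM,\ochi)=\Ind_{\oP}^{\oG}\Theta(\oM,\ochi)$. Proposition \ref{prop:rodier} gives
\[
\dim \Hom_{U}(\Theta(\oG/\oM,\ochi),\psi)=\dim \Hom_{U_M}(\Theta(\oM,\ochi),\psi|_{U_M}).
\]
Since $\psi$ is non-degenerate on each $U_\al$ with $\al\in\Delta$, its restriction to $U_M$ is a non-degenerate character of $U_M$, so the right-hand side computes the dimension of the space of Whittaker functionals of $\Theta(\oM,\ochi)$. Applying the preceding theorem gives the three cases:
if some $r_i>n_Q$, the right-hand side is $0$, hence $\Theta(\oG/\oM,\ochi)$ is non-generic;
if $r_i\leq n_Q$ for all $i$, the right-hand side is nonzero, hence $\Theta(\oG/\oM,\ochi)$ is generic;
if $r_i=n_Q$ for all $i$, the right-hand side equals $1$, hence $\Theta(\oG/\oM,\ochi)$ has a unique Whittaker model.
\end{proof}
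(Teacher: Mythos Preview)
Your proposal is correct and matches the paper's approach exactly: the paper simply says the theorem follows by combining the preceding result on $\Theta(\oM,\ochi)$ with Proposition~\ref{prop:rodier}. Your write-up spells out this combination carefully, including the check that $\psi|_{U_M}$ remains non-degenerate, which is implicit in the paper.
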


In the rest of this paper, we would like find a formula for some values of the unramified Whittaker functions in some special instances.

\subsection{Some calculation of $c(\bw,\ochi)$}

In this section, we carry out some calculation of $c(\bw,\ochi)$ for exceptional and anti-exceptional characters. In particular, we show that $\Theta(\oG/\oM,\ochi)$ contains a spherical vector.

\begin{Lem}\label{lem:gk for exceptiona}
Let $\ochi$ be an exceptional character. Then for $1\leq i\leq r-1$
\[
c(\sr_i\cdots \sr_1,\ochi)\neq 0.
\]
\end{Lem}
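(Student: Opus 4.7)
The plan is to identify the inversion set $\Phi(\sr_i \cdots \sr_1)$ explicitly, to evaluate $\ochi_\al$ on each $\al$ in it using exceptionality, and then to observe that the resulting Gindikin--Karpelevich product telescopes to a nonzero quantity.

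First I would verify, by induction on $i$ (or directly from the standard reduced-word formula $\Phi(\sr_{i_1}\cdots \sr_{i_k}) = \{\al_{i_k},\ \sr_{i_k}(\al_{i_{k-1}}),\ \ldots,\ \sr_{i_k}\cdots \sr_{i_2}(\al_{i_1})\}$), that
\[
\Phi(\sr_i \cdots \sr_1) = \{\al_1,\ \al_1+\al_2,\ \ldots,\ \al_1+\cdots+\al_i\}.
\]
In particular this set has $i = \ell(\sr_i \cdots \sr_1)$ elements, and since the root system is simply-laced we have $n_\al = n_Q$ for each such $\al$.

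The main step is to compute $\ochi_\al = \ochi(\oh_\al(\varpi^{n_Q}))$ for $\al = \al_1 + \cdots + \al_k$. Using the identity $\oh_\beta(a) = (\eta(\beta^\vee),a)_n \cdot \bfs(h_\beta(a))$ (valid for all $\beta\in\Phi$) together with the torus cocycle
\[
\bfs(y_1(a)) \cdot \bfs(y_2(b)) = (a,b)_n^{D(y_1,y_2)} \cdot \bfs(y_1(a) y_2(b)),
\]
the standing assumption $(\varpi,\varpi)_n = 1$ kills every Hilbert-symbol factor $(\varpi^{n_Q},\varpi^{n_Q})_n^{D(\al_j^\vee,\al_l^\vee)}$ that appears when one multiplies $\bfs(h_{\al_1}(\varpi^{n_Q})) \cdots \bfs(h_{\al_k}(\varpi^{n_Q}))$. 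Combined with $\al^\vee = \al_1^\vee + \cdots + \al_k^\vee$ and the multiplicativity of $\eta$, this yields
\[
\oh_\al(\varpi^{n_Q}) = \prod_{j=1}^{k} \oh_{\al_j}(\varpi^{n_Q}),
\]
and exceptionality then gives $\ochi_\al = q^{-k}$.

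Substituting into the Gindikin--Karpelevich formula produces a telescoping product:
\[
c(\sr_i \cdots \sr_1,\ochi) = \prod_{k=1}^{i} \frac{1 - q^{-(k+1)}}{1 - q^{-k}} = \frac{1 - q^{-(i+1)}}{1 - q^{-1}},
\]
which is nonzero for any prime power $q$. The only genuinely technical point is the middle paragraph: justifying the multiplicativity of $\oh_\al$ along the decomposition $\al^\vee = \sum_j \al_j^\vee$ when evaluated at $\varpi^{n_Q}$, which is where the hypothesis $(\varpi,\varpi)_n = 1$ is essential. The rest of the argument is a routine length-of-word count and a telescoping simplification.
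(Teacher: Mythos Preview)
Your argument is correct and follows exactly the same route as the paper's proof, which simply asserts ``by direct calculation'' the telescoping product
\[
c(\sr_i\cdots \sr_1,\ochi)=\dfrac{1-q^{-2}}{1-q^{-1}}\cdots \dfrac{1-q^{-(i+1)}}{1-q^{-i}}\neq 0.
\]
You have merely spelled out what that direct calculation entails: identifying the inversion set, using $(\varpi,\varpi)_n=1$ to establish multiplicativity of $\oh_\al(\varpi^{n_Q})$ along $\al^\vee=\sum_j\al_j^\vee$, and then invoking exceptionality to get $\ochi_\al=q^{-k}$.
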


\begin{proof}
By direct calculation,
\[
c(\sr_i\cdots \sr_1,\ochi)=\dfrac{1-q^{-2}}{1-q^{-1}}\cdots \dfrac{1-q^{-(i+1)}}{1-q^{-i}}\neq 0.
\]
\end{proof}

\begin{Lem}
Let $\ochi$ be an exceptional character. Then
\[
c(\bw_G,\ochi)\neq 0.
\]
\end{Lem}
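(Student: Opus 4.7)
The plan is to reduce the statement to checking each factor of the Gindikin–Karpelevich product. Since $\Phi(\bw_G) = \Phi^+$, we have
\[
c(\bw_G,\ochi) = \prod_{\al \in \Phi^+} c(\sr_\al,\ochi) = \prod_{\al \in \Phi^+} \dfrac{1-q^{-1}\ochi_\al}{1-\ochi_\al},
\]
so it suffices to show that for every positive root $\al$, $\ochi_\al \neq 1$ (the vanishing of the numerator causes no issue, only of the denominator). I will establish the stronger fact that $\ochi_\al = q^{-\mathrm{ht}(\al)}$, where $\mathrm{ht}(\al)$ is the usual height of $\al$; this immediately forces $0 < \ochi_\al \leq q^{-1} < 1$, hence both numerator and denominator are nonzero positive reals.

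The technical heart of the argument is a multiplicativity identity: for $\al = \al_i + \al_{i+1} + \cdots + \al_{j-1}$, I claim
\[
\oh_\al(\varpi^{n_Q}) = \prod_{k=i}^{j-1} \oh_{\al_k}(\varpi^{n_Q}) \quad \text{in } \oT.
\]
To prove this, apply the structural formula $\oh_\beta(a) = (\eta(\beta^\vee),a)_n \cdot \bfs(h_\beta(a))$ to $\beta = \al$ and to each $\beta = \al_k$ with $a = \varpi^{n_Q}$. Because $\eta \colon Y^{sc} \to F^\times$ is a homomorphism and $\al^\vee = \sum_{k=i}^{j-1}\al_k^\vee$ (simply-laced type $A$), bilinearity of the Hilbert symbol yields $(\eta(\al^\vee),\varpi^{n_Q})_n = \prod_k (\eta(\al_k^\vee),\varpi^{n_Q})_n$. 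The torus cocycle contribution $(\varpi^{n_Q},\varpi^{n_Q})_n^{D(\cdot,\cdot)}$ governing the product $\prod_k \bfs(\varpi^{n_Q\al_k^\vee})$ vanishes thanks to the standing hypothesis $(\varpi,\varpi)_n = 1$, so $\prod_k \bfs_{n_Q\al_k^\vee} = \bfs_{n_Q\al^\vee}$. Combining these two observations (and using that Hilbert-symbol factors lie in the central $\mu_n$) gives the claimed identity.

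With multiplicativity in hand, the exceptional hypothesis $\ochi(\oh_{\al_k}(\varpi^{n_Q})) = q^{-1}$ for each $k$ gives
\[
\ochi_\al = \prod_{k=i}^{j-1} \ochi(\oh_{\al_k}(\varpi^{n_Q})) = q^{-(j-i)} = q^{-\mathrm{ht}(\al)}.
\]
Since $\mathrm{ht}(\al) \geq 1$ we have $1 - \ochi_\al > 0$ and $1 - q^{-1}\ochi_\al > 0$, so every factor $c(\sr_\al,\ochi)$ is a nonzero positive rational and the product $c(\bw_G,\ochi)$ is nonzero. The main (indeed only) subtle point is the bookkeeping in the multiplicativity step, but this is precisely what the assumption $(\varpi,\varpi)_n = 1$ is designed to handle.
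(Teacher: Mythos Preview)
Your proof is correct and follows essentially the same route as the paper: both arguments boil down to the computation $\ochi_\al = q^{-\mathrm{ht}(\al)}$ for every $\al\in\Phi^+$, so that each Gindikin--Karpelevich factor $\dfrac{1-q^{-1}\ochi_\al}{1-\ochi_\al}$ is a nonzero positive rational. The paper packages this via the previous lemma (computing $c(\sr_i\cdots\sr_1,\ochi)$ block by block) and an induction on rank, whereas you compute all factors at once; the content is the same.

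One small correction: your parenthetical ``the vanishing of the numerator causes no issue, only of the denominator'' is misleading --- if some numerator $1-q^{-1}\ochi_\al$ vanished, the product \emph{would} be zero. Fortunately your stronger claim $\ochi_\al=q^{-\mathrm{ht}(\al)}<1$ rules out both possibilities, so the argument still goes through; just drop or rephrase that sentence.
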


\begin{proof}
This follows the above lemma and induction.
\end{proof}

\begin{Cor}
The representation $\Theta(\oG/\oM,\ochi)$ contains a spherical vector.
\end{Cor}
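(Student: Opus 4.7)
The plan is straightforward: since $\Theta(\oG/\oM,\ochi)$ is by definition the image of the intertwining operator $T_{\bw_M,\ochi}: I(\ochi)\to I({}^{\bw_M}\ochi)$, and $I(\ochi)$ contains the normalized spherical vector $\phi_K$, it suffices to show that $T_{\bw_M,\ochi}(\phi_K)$ is a nonzero spherical vector of $I({}^{\bw_M}\ochi)$. By the formula recalled right before Theorem \ref{thm:local coefficient matrix} extended multiplicatively via the Gindikin-Karpelevich product, we have $T_{\bw_M,\ochi}(\phi_K)=c(\bw_M,\ochi)\cdot \phi_K^{\bw_M}$, so everything reduces to proving $c(\bw_M,\ochi)\neq 0$.

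To establish this, I would factor the Levi as $\bm\simeq \bgl_{r_1}\times\cdots\times \bgl_{r_k}$ according to the ordered partition attached to $\Delta'$. The long element $\bw_M$ decomposes as a product $\bw_{M_1}\cdots \bw_{M_k}$ of long elements in each factor, with disjoint sets of positive roots, so
\[
c(\bw_M,\ochi)=\prod_{i=1}^{k} \prod_{\al\in \Phi_{M_i}^+} c(\sr_\al,\ochi).
\]
Since $\ochi$ is $\Delta'$-(anti-)exceptional and these calculations depend only on the values $\ochi(\oh_\al(\varpi^{n_\al}))$ for $\al$ a simple root of the factor in question, the character $\ochi$ restricted to each $\bgl_{r_i}$-factor is exceptional (resp.\ anti-exceptional) in the sense of the factor. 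Thus the factor $\prod_{\al\in\Phi_{M_i}^+}c(\sr_\al,\ochi)$ is precisely the Gindikin-Karpelevich coefficient of the long element of $\bgl_{r_i}$, which is nonzero by the two preceding lemmas (Lemma \ref{lem:gk for exceptiona} and the lemma immediately after it), applied inside each $\bgl_{r_i}$.

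The only subtle point is the sign convention: the paper treats exceptional characters where $\ochi(\oh_\al(\varpi^{n_\al}))=q^{-1}$, for which $c(\bw_M,\ochi)$ is an explicit nonzero product of the form $\prod_{j\geq 1}(1-q^{-(j+1)})/(1-q^{-j})$. For an \emph{anti}-exceptional character, where $\ochi(\oh_\al(\varpi^{n_\al}))=q$, the same direct computation on each simple $\bgl_{r_i}$-factor shows $c(\sr_i\cdots\sr_1,\ochi)=\prod_{j}(1-q^{j-1})/(1-q^j)$, which is again a nonzero rational number since no factor vanishes for $r_i\geq 1$. So the argument works uniformly in either of the two settings the paper admits. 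I expect no serious obstacle here; the only thing to verify carefully is that the conventions of \emph{anti}-exceptional vs.\ exceptional do not introduce any cancellations in the simple Weyl factors, which is immediate from the closed form.
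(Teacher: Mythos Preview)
Your main argument is correct and matches the paper's approach: both identify $\Theta(\oG/\oM,\ochi)$ as the image of $T_{\bw_M,\ochi}$, apply $T_{\bw_M,\ochi}(\phi_K)=c(\bw_M,\ochi)\phi_K'$, and reduce to $c(\bw_M,\ochi)\neq 0$, which follows from the preceding lemma applied to each $\bgl_{r_i}$-factor of $\bm$ (where the $\Delta'$-exceptional $\ochi$ restricts to an exceptional character).

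However, your final paragraph about the anti-exceptional case contains a genuine error. If $\ochi$ is $\Delta'$-anti-exceptional, then for each simple $\al\in\Delta'$ one has $\ochi_\al=q$ and hence
\[
c(\sr_\al,\ochi)=\dfrac{1-q^{-1}\cdot q}{1-q}=0,
\]
so $c(\bw_M,\ochi)=0$ and the spherical vector is killed by $T_{\bw_M,\ochi}$. In your product $\prod_j(1-q^{j-1})/(1-q^{j})$ the $j=1$ factor already vanishes, contrary to your claim. This does not affect the Corollary: in the paper $\Theta(\oG/\oM,\ochi)$ is defined only for $\Delta'$-\emph{exceptional} $\ochi$ (Section~\ref{sec:relative theta}), and the spherical vector in question lives in $I({}^{\bw_M}\ochi)$ where ${}^{\bw_M}\ochi$ is anti-exceptional. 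So simply delete the anti-exceptional discussion; the proof is complete without it.
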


\begin{proof}
The representation $\Theta(\oG/\oM,\ochi)$ is defined as the image of $T_{\bw_M,\ochi}:I(\ochi)\to I({}^{\bw_M}\ochi)$. The image contains the vector $c(\bw_G,\ochi)\phi_K^{{}^{\bw}\ochi}$, which is nonzero.
\end{proof}

\begin{Cor}\label{cor:vanishing of unramified whittaker}
If $r_i> n_Q$ for some $i$, then
\[
\cw_{\gamma}(\gamma',{}^{\bw_M}\ochi)=0
\]
for any $\gamma,\gamma'$.
\end{Cor}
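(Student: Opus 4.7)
The plan is to deduce the vanishing from the non-genericity of $\Theta(\oG/\oM,\ochi)$ established in Theorem \ref{thm:whittaker for relative theta}(1), combined with the preceding corollary that the normalized spherical vector of $I({}^{\bw_M}\ochi)$ actually lies in the subrepresentation $\Theta(\oG/\oM,\ochi)$. Once these two inputs are in hand the vanishing is essentially automatic: the unramified Whittaker function $\cw_\gamma(\gamma',{}^{\bw_M}\ochi)$ is, by its very definition, an evaluation of a $\psi$-Whittaker functional of $I({}^{\bw_M}\ochi)$ on a $\oG$-translate of the spherical vector, and any such functional must annihilate the entire non-generic subspace $\Theta(\oG/\oM,\ochi)$.

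In detail, I would first note that $T_{\bw_M,\ochi}(\phi_K)=c(\bw_M,\ochi)\cdot \phi_K^{{}^{\bw_M}\ochi}$, where the Gindikin-Karpelevich coefficient $c(\bw_M,\ochi)$ is nonzero for the $\Delta'$-exceptional character $\ochi$; this follows by factoring $\bw_M$ blockwise through the Weyl groups $W(\bm_i)$ of the Levi factors $\bgl_{r_i}$ and applying the telescoping computation of Lemma \ref{lem:gk for exceptiona} on each factor. Hence $\phi_K^{{}^{\bw_M}\ochi}\in \Theta(\oG/\oM,\ochi)$, and by $\oG$-invariance (in particular $\oT$-invariance) the translate $\gamma'\cdot \phi_K^{{}^{\bw_M}\ochi}$ lies in $\Theta(\oG/\oM,\ochi)$ for every $\gamma'\in \oT$. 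Since $r_i>n_Q$ for some $i$, Theorem \ref{thm:whittaker for relative theta}(1) gives $\Hom_U(\Theta(\oG/\oM,\ochi),\psi)=0$, so restriction sends any $\lambda\in \Wh_\psi(I({}^{\bw_M}\ochi))$ to the zero functional on $\Theta(\oG/\oM,\ochi)$. Applying this to $\lambda=\lambda_\gamma^{{}^{\bw_M}\ochi}$ and unwinding the definition
\[
\cw_\gamma(\gamma',{}^{\bw_M}\ochi)=\delta_B^{-1/2}(\gamma')\cdot W_{\lambda_\gamma^{{}^{\bw_M}\ochi}}(\gamma'\cdot \phi_K^{{}^{\bw_M}\ochi})
\]
then yields the desired conclusion.

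The only step that requires any verification at all is the nonvanishing of $c(\bw_M,\ochi)$; this is a routine replay of the $\bw_G$-computation already carried out in Lemma \ref{lem:gk for exceptiona} and its corollary, applied separately to each $\GL_{r_i}$ block. I do not foresee a genuine obstacle. The real content of the corollary is packaged inside Theorem \ref{thm:whittaker for relative theta}(1), and what remains is a one-line containment argument together with the observation that Whittaker functionals of the ambient principal series restrict to Whittaker functionals on any subrepresentation.
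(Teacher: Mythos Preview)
Your proof is correct and follows essentially the same approach as the paper's: the spherical vector $\phi_K^{{}^{\bw_M}\ochi}$ lies in the subrepresentation $\Theta(\oG/\oM,\ochi)$ (by the preceding corollary), and since this subrepresentation is non-generic by Theorem~\ref{thm:whittaker for relative theta}(1), every $\psi$-Whittaker functional of $I({}^{\bw_M}\ochi)$ annihilates it and hence all its $\oT$-translates. Your write-up is in fact slightly more careful than the paper's, spelling out the blockwise verification of $c(\bw_M,\ochi)\neq 0$ and the $\oT$-invariance step explicitly.
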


\begin{proof}
If $\cw_{\gamma}(\gamma',{}^{\bw_M}\ochi)\neq 0$, then the Whittaker functional is nonzero on a spherical vector $\phi_K$.

We already know that $\phi_K\in\Theta(\oG/\oM,\ochi)$. Thus this implies that $\Theta(\oG/\oM,\ochi)$ has a nonzero Whittaker functional. This contradicts with our assumption and Theorem \ref{thm:whittaker for relative theta}.
\end{proof}

\begin{Lem}\label{lem:gk for anti 1}
Let $\ochi$ be an anti-exceptional character. Then
\[
c(\sr_i\cdots \sr_1,{}^{\sr_1\cdots \sr_i}\ochi)\neq 0.
\]
\end{Lem}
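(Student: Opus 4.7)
The plan is to mimic the direct computation in Lemma \ref{lem:gk for exceptiona}, reducing the statement to an explicit product that telescopes. The first step is to identify $\Phi(\bw)$ for $\bw:=\sr_i\sr_{i-1}\cdots\sr_1$. Viewing the $\sr_k$'s as transpositions $(k,k+1)$ acting on the standard basis, the permutation $\bw$ sends $1\mapsto i+1$ and $k\mapsto k-1$ for $2\leq k\leq i+1$, while fixing all other indices. A short case analysis on the positive roots $e_j-e_k$ then yields
\[
\Phi(\bw)=\{\al_1+\al_2+\cdots+\al_j:1\leq j\leq i\},
\]
together with the identity $\bw(\al_1+\cdots+\al_j)=-(\al_j+\al_{j+1}+\cdots+\al_i)$ for each $1\leq j\leq i$.

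Next, I would apply the identity $({}^{\bw'}\ochi)_\al=\ochi_{\bw'^{-1}(\al)}$ recorded in the proof of Proposition \ref{prop:inductive formula}, with $\bw'=\sr_1\cdots\sr_i=\bw^{-1}$, together with $\ochi_{-\beta}=\ochi_\beta^{-1}$, to obtain
\[
({}^{\sr_1\cdots\sr_i}\ochi)_{\al_1+\cdots+\al_j} = \ochi_{\bw(\al_1+\cdots+\al_j)} = \ochi_{\al_j+\cdots+\al_i}^{-1}.
\]
To evaluate this, I would invoke the multiplicativity $\ochi_{\al+\beta}=\ochi_\al\cdot\ochi_\beta$ valid whenever $\al,\beta,\al+\beta$ are all roots: it follows from the cocycle $\bfs_{y_1}\bfs_{y_2}=(\varpi,\varpi)_n^{D(y_1,y_2)}\bfs_{y_1+y_2}$ together with $(\varpi,\varpi)_n=1$ and the homomorphism property of $\eta:Y^{sc}\to F^\times$. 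Since $\ochi$ is anti-exceptional, $\ochi_{\al_k}=q$ for every simple $\al_k$, hence $\ochi_{\al_j+\cdots+\al_i}=q^{i-j+1}$ and therefore $({}^{\sr_1\cdots\sr_i}\ochi)_{\al_1+\cdots+\al_j}=q^{j-i-1}$.

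Substituting into the Gindikin-Karpelevich product gives
\[
c(\sr_i\cdots\sr_1,{}^{\sr_1\cdots\sr_i}\ochi) = \prod_{j=1}^i\frac{1-q^{-1}\cdot q^{j-i-1}}{1-q^{j-i-1}} = \prod_{j=1}^i\frac{1-q^{j-i-2}}{1-q^{j-i-1}},
\]
and after reindexing $k=i-j+1$ this telescopes to $\dfrac{1-q^{-i-1}}{1-q^{-1}}$, which is manifestly nonzero. The only mildly delicate point is the multiplicativity of $\ochi_\al$ in $\al$; this is already an implicit ingredient in Lemma \ref{lem:gk for exceptiona}, and the justification sketched above suffices. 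The final output has exactly the same closed form as the exceptional case, which is a small pleasant surprise but plays no role in what follows.
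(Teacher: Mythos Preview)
Your proposal is correct and follows essentially the same direct computation as the paper, just with more of the intermediate steps written out; the paper simply records the product $\dfrac{1-q^{-(i+1)}}{1-q^{-i}}\cdots\dfrac{1-q^{-2}}{1-q^{-1}}$ and observes it is nonzero, which is exactly what your argument produces after identifying $\Phi(\sr_i\cdots\sr_1)$ and evaluating each factor.
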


\begin{proof}
The proof is the same as Lemma \ref{lem:gk for exceptiona}. We have
\[
c(\sr_i\cdots \sr_1,{}^{\sr_1\cdots \sr_i}\ochi)=\dfrac{1-q^{-(i+1)}}{1-q^{-i}}\cdots \dfrac{1-q^{-2}}{1-q^{-1}}\neq 0.
\]
\end{proof}

\begin{Lem}\label{lem:gk for anti 2}
Let $\ochi$ be an anti-exceptional character. Let
\[
\bw=(\sr_{r-i+1}\cdots \sr_r)\cdots (\sr_2\cdots \sr_{i+1})(\sr_1\cdots \sr_i)
\]
Then
$c(\bw^{-1},{}^{\bw}\ochi)\neq 0$.
\end{Lem}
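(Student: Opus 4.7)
The plan is to prove the more general statement that $c(\bw^{-1},{}^{\bw}\ochi)\neq 0$ for \emph{any} $\bw\in W$ when $\ochi$ is anti-exceptional; the lemma then follows by taking the specific $\bw$ in the statement. The strategy is simply to expand the Gindikin-Karpelevich coefficient as a product over $\Phi(\bw^{-1})$ and verify each factor is well-defined and nonzero, with no recourse to the detailed structure of the Weyl element.

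First, using the product formula
\[
c(\bw^{-1},{}^{\bw}\ochi)=\prod_{\al\in\Phi(\bw^{-1})} c(\sr_\al,{}^{\bw}\ochi) =\prod_{\al\in\Phi(\bw^{-1})}\dfrac{1-q^{-1}\ochi_{\bw^{-1}(\al)}}{1-\ochi_{\bw^{-1}(\al)}},
\]
where the identity $({}^{\bw}\ochi)_\al=\ochi_{\bw^{-1}(\al)}$ is the same one established in the proof of Proposition \ref{prop:inductive formula}, it suffices to show that $\ochi_{\bw^{-1}(\al)}\notin\{1,q\}$ for every $\al\in\Phi(\bw^{-1})$. Since by definition $\al>0$ and $\bw^{-1}(\al)<0$, the root $-\bw^{-1}(\al)$ is a genuine positive root, and the claim reduces to computing $\ochi_\beta$ on negative roots $\beta$.

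To that end, I would observe that in type $A_r$ the root system is simply laced so $n_\beta=n_Q$ for every root $\beta$, and establish the formula $\ochi_\beta=q^{\mathrm{ht}(\beta)}$ for positive roots $\beta$ (with $\mathrm{ht}$ the height). For a positive root $\beta=\al_{k_0}+\al_{k_0+1}+\cdots+\al_{k_1}$ the coroot identity $\beta^\vee=\sum_{j=k_0}^{k_1}\al_j^\vee$ holds in type $A$, so $h_\beta(a)=\prod_j h_{\al_j}(a)$ in $T$. Combining (i) the formula $\oh_\beta(a)=(\eta(\beta^\vee),a)_n\bfs(h_\beta(a))$, (ii) the product rule $\bfs(y_1(a))\bfs(y_2(b))=(a,b)_n^{D(y_1,y_2)}\bfs(y_1(a)y_2(b))$, (iii) the multiplicativity of $\eta$ on $Y^{sc}$, and (iv) the vanishing $(\varpi,\varpi)_n=1$ which kills the Hilbert-symbol cocycle factors at $a=\varpi^{n_Q}$, one verifies inductively that $\oh_\beta(\varpi^{n_Q})=\prod_{j=k_0}^{k_1}\oh_{\al_j}(\varpi^{n_Q})$ in $\oT$. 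Anti-exceptionality $\ochi_{\al_j}=q$ then yields $\ochi_\beta=q^{\mathrm{ht}(\beta)}$ on positive roots, hence $\ochi_\beta=q^{-\mathrm{ht}(-\beta)}$ on negative roots. Applying this with $\beta=\bw^{-1}(\al)$ (which has $\mathrm{ht}(-\beta)\geq 1$) gives $\ochi_{\bw^{-1}(\al)}=q^{-k}$ for some $k\geq 1$, so the numerator $1-q^{-(k+1)}$ and denominator $1-q^{-k}$ are both nonzero, and the product of finitely many such factors is nonzero. The only substantive point is the decomposition of $\oh_\beta$ into a product of simple-root terms; everything else is immediate.
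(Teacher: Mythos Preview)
Your argument is correct and in fact establishes the stronger statement that $c(\bw^{-1},{}^{\bw}\ochi)\neq 0$ for \emph{every} $\bw\in W$ when $\ochi$ is anti-exceptional. The paper takes a different route: it uses the cocycle relation for the Gindikin--Karpelevich coefficient to peel off the blocks $(\sr_{j}\cdots\sr_{j+i-1})$ of the specific word one at a time, reducing each step to an instance of Lemma~\ref{lem:gk for anti 1}. Your approach bypasses this induction entirely by computing $\ochi_\beta=q^{\mathrm{ht}(\beta)}$ for all roots at once and then reading off that every factor $\dfrac{1-q^{-(k+1)}}{1-q^{-k}}$ is nonzero; this is both more transparent and more general. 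The price is the short structural check that $\oh_\beta(\varpi^{n_Q})=\prod_j\oh_{\al_j}(\varpi^{n_Q})$, which you handle correctly via $(\varpi,\varpi)_n=1$.

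One step you leave implicit deserves a sentence: the passage ``hence $\ochi_\beta=q^{-\mathrm{ht}(-\beta)}$ on negative roots'' requires $\ochi_{-\gamma}=\ochi_\gamma^{-1}$. This follows by the same mechanism you already invoke---the vanishing $(\varpi,\varpi)_n=1$ gives $\oh_{-\gamma}(\varpi^{n_Q})=\oh_\gamma(\varpi^{n_Q})^{-1}$ in $Z(\oT)$, and then apply the character $\ochi$. With that line added the argument is complete.
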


\begin{proof}
This follows from induction and Lemma \ref{lem:gk for anti 2}.
\end{proof}

\section{Calculation of certain local matrix coefficients}\label{sec:calculation of local matrix}

Assume that $n_Q \geq r$. The goal in this section is to calculation $\tau(\bw_G,{}^{\bw_G}\ochi,y,0)$ where $\ochi$ is an anti-exceptional character for $\overline{\GL}_{r}$. Recall that $\tau(\bw_G,{}^{\bw_G}\ochi,y,0)\neq 0$ unless $y=\bw[0]$ for some $\bw\in W$. As $n_Q\geq r$, the orbit of $0$ under the action of $W$ is free. The theta representation $\Theta(\oG,{}^{\bw_G}\ochi)$ is realized as a subrepresentation of $I(\ochi)$. Recall that $\phi_K\in \Theta(\oG,{}^{\bw_G}\ochi)$.

\begin{Lem}\label{lem:3}
For $\bw \in W$,
\[
\cw_{\bw[0]}(0,\ochi)= \tau(\bw_G,{}^{\bw_G}\ochi,\bw[0],0).
\]
\end{Lem}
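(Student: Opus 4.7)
I would apply Theorem \ref{thm:unramified whittaker function} at $\ot=\bfs_0$ with $\gamma=\bfs_{\bw[0]}$ and show that the resulting sum over the Weyl group collapses to a single term. Since $w_G\cdot\bfs_0\cdot w_G^{-1}=\bfs_0$, the theorem expresses
\[
\cw_{\bw[0]}(0,\ochi)=\sum_{\bw'\in W} c(\bw_G\bw',\ochi)\cdot \tau(\bw'^{-1},{}^{\bw'}\ochi,\bw[0],0).
\]
The identity to be proved is then equivalent to showing $c(\bw_G\bw',\ochi)=0$ for every $\bw'\neq\bw_G$: the surviving term $\bw'=\bw_G$ gives $c(e,\ochi)=1$ and $\bw_G^{-1}=\bw_G$ (the longest element of $S_r$ is an involution), producing exactly $\tau(\bw_G,{}^{\bw_G}\ochi,\bw[0],0)$.

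To establish this vanishing I would combine the product formula $c(\bw,\ochi)=\prod_{\al\in\Phi(\bw)}c(\sr_\al,\ochi)$ with the anti-exceptionality of $\ochi$. For every simple $\al$ the hypothesis gives $\ochi(\oh_\al(\varpi^{n_Q}))=q$, whence $c(\sr_\al,\ochi)=(1-q^{-1}\cdot q)/(1-q)=0$, so it suffices to find at least one simple root in $\Phi(\bw_G\bw')$ whenever $\bw'\neq\bw_G$. Because $\bw_G$ reverses the sign of every root, $\Phi(\bw_G\bw')=\{\al>0:\bw'(\al)>0\}$. If this set contained no simple root, then $\bw'$ would send every simple root to a negative root; but the longest element is characterized as the unique Weyl element with this property, forcing $\bw'=\bw_G$, a contradiction. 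Hence every $\bw'\neq\bw_G$ contributes zero to the sum, and the lemma follows.

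The only delicate point I expect to handle is justifying the use of the product formula in the untwisted form stated in the paper. Expanding $T_{\bw,\ochi}$ along any reduced decomposition and applying the identity $({}^{\bw}\ochi)_\al=\ochi_{\bw^{-1}(\al)}$ turns the a priori twisted product over the steps of the decomposition into a product of $c(\sr_\beta,\ochi)$ indexed by $\beta\in\Phi(\bw)$; in our setting the implicit Hilbert-symbol corrections between $\bar h_\al(\varpi^{n_Q})$ and its Weyl conjugates trivialize because $(\varpi,\varpi)_n=1$, which is guaranteed by the standing assumption $\mu_{2n}\subset F^\times$. Once this is in hand the remainder of the argument is purely combinatorial and takes only a few lines.
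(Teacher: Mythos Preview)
Your proposal is correct and follows exactly the paper's approach: apply Theorem~\ref{thm:unramified whittaker function} and observe that $c(\bw_G\bw',\ochi)=0$ for $\bw'\neq\bw_G$ because anti-exceptionality forces $c(\sr_\al,\ochi)=0$ for every simple $\al$ and $\Phi(\bw_G\bw')$ always contains a simple root unless $\bw'=\bw_G$. The paper states this in one line (``Note that $c(\bw_G\bw,\ochi)=0$ unless $\bw=\bw_G$''); you have simply supplied the details, and your final paragraph on the product formula is unnecessary since that formula is already recorded in Section~\ref{sec:unramified principal}.
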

\begin{proof}
This follows from Theorem \ref{thm:unramified whittaker function}. Note that $c(\bw_G\bw,\ochi)=0$ unless $\bw=\bw_G$.
\end{proof}

Let $y\in Y$. 
We define the Gauss sum $g(\bw,y)$ for $\bw\in W$ as follows:
\begin{enumerate}
\item $g(\id,y)=1$;
\item For a simple reflection $\sr_\al$,
\[
g(\sr_\al,y)=\bfg_{\psi^{-1}}(\la y_\rho,\al\ra Q(\al^\vee)).
\]
\item If $\bw_1,\bw_2\in W$ such that $\ell(\bw_1\bw_2)=\ell(\bw_1)+\ell(\bw_2)$, then
\[
g(\bw_1\bw_2,y)=g(\bw_1,\bw_2[y])g(\bw_2,y).
\]
\end{enumerate}

We have to verify that this is well-defined.
\begin{Lem}
We have
\[
g(\bw,y)=\prod_{\al \in \Phi(\bw)} \bfgg_{\psi^{-1}}(\la y_\rho,\al\ra Q(\al^\vee)).
\]
Therefore, $g(\bw,y)$ is well-defined.
\end{Lem}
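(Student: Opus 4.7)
The plan is to flip the problem: define
\[
h(\bw, y) := \prod_{\al \in \Phi(\bw)} \bfgg_{\psi^{-1}}\bigl(\la y_\rho, \al\ra Q(\al^\vee)\bigr)
\]
and verify directly that $h$ satisfies clauses (1), (2), (3) in the recursive definition of $g$. Since those clauses force a value on every reduced decomposition, and $h$ is manifestly intrinsic to $\bw$, this simultaneously establishes well-definedness of $g$ and the claimed formula $g = h$.

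First I would dispatch the base cases: $\Phi(\id) = \emptyset$, so $h(\id, y) = 1$, matching (1); and $\Phi(\sr_\al) = \{\al\}$, so $h(\sr_\al, y) = \bfgg_{\psi^{-1}}(\la y_\rho, \al\ra Q(\al^\vee))$, matching (2).

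The main step is the cocycle relation (3). Given $\bw_1, \bw_2 \in W$ with $\ell(\bw_1\bw_2) = \ell(\bw_1) + \ell(\bw_2)$, I would invoke the standard inversion-set identity
\[
\Phi(\bw_1 \bw_2) = \Phi(\bw_2) \sqcup \bw_2^{-1}\bigl(\Phi(\bw_1)\bigr),
\]
which one verifies by splitting the set $\{\al > 0 : \bw_1 \bw_2(\al) < 0\}$ according to the sign of $\bw_2(\al)$; length additivity guarantees that the two pieces are disjoint and consist entirely of positive roots. Decomposing the product along this disjoint union,
\[
h(\bw_1\bw_2, y) = h(\bw_2, y) \cdot \prod_{\beta \in \Phi(\bw_1)} \bfgg_{\psi^{-1}}\bigl(\la y_\rho, \bw_2^{-1}(\beta)\ra \, Q(\bw_2^{-1}(\beta)^\vee)\bigr).
\]
Two simplifications collapse the second factor to $h(\bw_1, \bw_2[y])$: the Weyl invariance of $Q$ gives $Q(\bw_2^{-1}(\beta)^\vee) = Q(\beta^\vee)$, while the identity $\bw[y] - \rho = \bw(y_\rho)$ (cf. Sect.~2.4) combined with adjointness of the pairing gives $\la y_\rho, \bw_2^{-1}(\beta)\ra = \la \bw_2(y_\rho), \beta\ra = \la (\bw_2[y])_\rho, \beta\ra$. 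This proves (3).

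The hardest part is really just pushing the substitution $\al = \bw_2^{-1}(\beta)$ through carefully and keeping the indexing straight; the inversion-set identity is classical and the remaining manipulations are purely formal. Once (1)--(3) are verified for $h$, the recursive procedure applied to any reduced expression for $\bw$ is forced to output the same value, namely $h(\bw, y)$, so $g(\bw, y)$ is well-defined and equals the claimed product.
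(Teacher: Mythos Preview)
Your argument is correct. The paper's proof runs in the opposite direction: it fixes a reduced decomposition $\bw=\sr_{i_1}\cdots\sr_{i_k}$, unfolds the recursion to obtain $\prod_j \bfgg_{\psi^{-1}}(\la y_\rho,\sr_{i_k}\cdots\sr_{i_{j+1}}(\al_{i_j})\ra Q(\al^\vee))$, and then invokes the standard fact (\cite{Bump13} Proposition~20.10) that $\{\sr_{i_k}\cdots\sr_{i_{j+1}}(\al_{i_j})\}_j=\Phi(\bw)$ to conclude. You instead start from the closed form $h$ and verify the cocycle relation via the decomposition $\Phi(\bw_1\bw_2)=\Phi(\bw_2)\sqcup\bw_2^{-1}\Phi(\bw_1)$. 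The two proofs rest on the same combinatorics of inversion sets; yours has the minor advantage of checking the general length-additive cocycle (3) directly rather than only the simple-reflection case, while the paper's version is slightly more concrete in that it exhibits the product explicitly along a chosen reduced word.
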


\begin{proof}
Recall that $\bw[y]_\rho=\bw(y-\rho)$ for any $\bw \in W$.
Fix a reduced decomposition $\bw=\sr_{i_1}\cdots\sr_{i_k}$. Then
\[
\begin{aligned}
g(\bw,y)=&g(\sr_{i_1},\sr_{i_2}\cdots \sr_{i_k}[y])\cdots g(\sr_{i_{k-1}},\sr_{i_k}[y])g(\sr_{i_k},y)\\
=&\prod_{j=1}^k \bfgg_{\psi^{-1}}(\la \sr_{i_{j+1}}\cdots\sr_{i_k}(y-\rho),\al_{i_j}\ra Q(\al^\vee))\\
=&\prod_{j=1}^k \bfgg_{\psi^{-1}}(\la y-\rho,\sr_{i_{k}}\cdots\sr_{i_{j+1}}(\al_{i_j})\ra Q(\al^\vee))\\
=&\prod_{\al \in \Phi(\bw)} \bfgg_{\psi^{-1}}(\la y-\rho,\al\ra Q(\al^\vee)).
\end{aligned}
\]
The last equality follows from \cite{Bump13} Proposition 20.10.
\end{proof}

We can now state the main result of this section.

\begin{Prop}\label{prop:tau in the stable range}
For $\bw \in W$,
\[
\tau(\bw_G,{}^{\bw_G}\ochi,\bw[0],0)=g(\bw,0).
\]
\end{Prop}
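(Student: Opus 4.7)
The plan is to compute $\tau(\bw_G,{}^{\bw_G}\ochi,\bw[0],0)$ via the identification with $\cw_{\bw[0]}(0,\ochi)$ supplied by Lemma \ref{lem:3}, and then prove the equality $\cw_{\bw[0]}(0,\ochi)=g(\bw,0)$ by induction on the rank $r$ of $\bg=\bgl_r$, using the inductive formula of Proposition \ref{prop:inductive 1 in general linear} to descend from $\overline{\GL}_r$ to $\overline{\GL}_{r-1}$. The base case $r=1$ is trivial, since $W=\{\id\}$ and $g(\id,0)=1$ matches the value $\cw_0(0,\ochi)=1$ obtained from standard normalization of the spherical Whittaker function; at this step one can alternatively note that the sum in Theorem \ref{thm:unramified whittaker function} collapses to the single term $\bw=\bw_G$ because anti-exceptionality forces $c(\bw_G\bw,\ochi)=0$ for every $\bw\neq\bw_G$ (every nontrivial Weyl element has a simple right descent, and $\ochi_\al=q$ for $\al\in\Delta$).

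For the inductive step, Proposition \ref{prop:inductive 1 in general linear} expresses $\cw_{\bw[0]}(0,\ochi)$ as a double sum indexed by $i$ and intermediate $y''$, with weight $\prod_{j=1}^{i-1}(1-\ochi_{ji}q^{-1})/(1-\ochi_{ji})$ in each term. Under the anti-exceptional hypothesis one has $\ochi_{ji}=q^{i-j}$, so this weight contains the zero factor $(1-q^0)/(1-q)$ whenever $i\geq 2$ (the $j=i-1$ term kills it). Hence only $i=1$ survives, and the formula collapses to
\[
\cw_{\bw[0]}(0,\ochi)=\sum_{y''}\tau(\sr_1\sr_2\cdots\sr_{r-1},{}^{\sr_{r-1}\cdots\sr_1}\ochi,\bw[0],y'')\cdot\cw_{M,y''}(0,{}^{\sr_{r-1}\cdots\sr_1}\ochi).
\]

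The element $\sr_1\sr_2\cdots\sr_{r-1}$ is a Coxeter element of $S_r$, whose reduced decomposition satisfies the subexpression-uniqueness hypothesis of Lemma \ref{lem:tau unique decomposition} (as highlighted in the Remark following that lemma). Therefore each nonzero term in the sum comes from a unique path and factorizes explicitly via Theorem \ref{thm:local coefficient matrix} into a product of $\tau^1$ (``stay'') and $\tau^2$ (``jump'') pieces; in particular the ``jump'' pieces are Gauss sums $\bfgg_{\psi^{-1}}(\la(y_l)_\rho,\al_l\ra Q(\al_l^\vee))$ with intermediate $y_l$'s determined by which simple reflections are turned on. Lemma \ref{lem:m to gl on w} simultaneously identifies $\cw_{M,y''}(0,\cdot)$ with an unramified Whittaker function on $\overline{\GL}_{r-1}$, which by the inductive hypothesis equals $g(\bw',0)$ for the Weyl element $\bw'\in W(\bgl_{r-1})$ extracted from $y''$.

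Assembling these pieces and matching them against the product $g(\bw,0)=\prod_{\beta\in\Phi(\bw)}\bfgg_{\psi^{-1}}(\la-\rho,\beta\ra Q(\beta^\vee))$ completes the induction. The main obstacle is the combinatorial bookkeeping: for each ``jump pattern'' in the Coxeter element $\sr_1\cdots\sr_{r-1}$ and the corresponding residual $\bw'\in W(\bgl_{r-1})$, one must verify that the individual Gauss-sum factors and $g(\bw',0)$ reassemble precisely into $g(\bw,0)$, using a decomposition of $\Phi(\bw)$ as a disjoint union of contributions coming from the Coxeter layer and the sub-Levi piece, together with the Weyl invariance of $Q$. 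This matching, in the spirit of \cite{Suzuki97,Suzuki98}, is where the detailed combinatorics of type $A$ root systems enters decisively.
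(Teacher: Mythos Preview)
Your overall strategy coincides with the paper's: identify $\tau(\bw_G,{}^{\bw_G}\ochi,\bw[0],0)$ with $\cw_{\bw[0]}(0,\ochi)$ via Lemma~\ref{lem:3}, induct on the rank via Proposition~\ref{prop:inductive 1 in general linear}, observe that anti-exceptionality kills all terms with $i\geq 2$, and invoke Lemma~\ref{lem:tau unique decomposition} on the Coxeter element to factor the surviving $\tau$.

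The gap is in your final paragraph. After collapsing to $i=1$, the sum over $y''$ is \emph{not} a single term. Decompose $\bw$ as a minimal coset representative times a Levi element $\bw'$; then Proposition~\ref{prop:inductive 1 in general linear} makes $y''$ range over $2^{r_0-2}$ elements of the form $\sr_{r_0-2}^{a_{r_0-2}}\cdots\sr_1^{a_1}\bw'[0]$, one for each pattern $(a_l)\in\{0,1\}^{r_0-2}$. Each such term contributes, via the inductive hypothesis, a \emph{different} Gauss-sum product $g(\sr_{r_0-2}^{a_{r_0-2}}\cdots\sr_1^{a_1}\bw',0)$, together with a mixture of $\tau^1$ and $\tau^2$ factors from the Coxeter layer. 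No individual term equals $g(\bw,0)$, and there is no disjoint-union decomposition of $\Phi(\bw)$ that makes this a term-by-term matching: the $\tau^1$ pieces are rational functions of $q$, not Gauss sums, and cannot be absorbed into any bijection of roots. The paper's actual argument extracts $g(\bw,0)\cdot\frac{1-q^{-1}}{1-q^{-(r_0-1)}}$ as a common factor (the fraction coming from the forced $\tau^1$ at position $r_0-1$), and then shows the residual sum over all $(a_l)$ equals $\frac{1-q^{-(r_0-1)}}{1-q^{-1}}$ by peeling off one index at a time: each inner sum over $a_l\in\{0,1\}$ is a two-case computation (depending, via Lemma~\ref{lem:5}, on whether $\sr_l$ raises or lowers the length of the running Weyl element) that uses $\bfgg_{\psi^{-1}}(k)\bfgg_{\psi^{-1}}(-k)=q^{-1}$ and yields $\frac{1-q^{-(l+1)}}{1-q^{-l}}$. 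These telescope. This telescoping sum, not a root-system bijection, is the substance of the proof.
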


The rest of this section is devoted to proving this result. Before the proof, we need some preparation.

\subsection{Two lemmas}
\begin{Lem}\label{lem:1}
We have $\tau(\sr_\al,\ochi,y,\sr_\al[y])=\bfg_{\psi^{-1}}(-\la y_\rho,\al\ra Q(\al^\vee))$.
\end{Lem}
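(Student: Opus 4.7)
The plan is to read off the answer directly from Theorem \ref{thm:local coefficient matrix}, being careful about which argument plays the role of ``$y_1$'' and which plays the role of ``$y$'' in that theorem. Writing $\tau(\sr_\al,\ochi,y,\sr_\al[y])=\tau(\sr_\al,\ochi,\bfs_y,\bfs_{\sr_\al[y]})$, the theorem's first index is $y_1=y$ and its second index is $y'=\sr_\al[y]$. The $\tau^2$-component is supported on pairs with $y_1\equiv \sr_\al[y']\mod Y_{Q,n}$; since $\sr_\al$ is an involution under the $[\cdot]$-action, this becomes $y\equiv y$, which is automatic. In the generic situation $\la y_\rho,\al\ra\neq 0$ one has $y\not\equiv \sr_\al[y]\mod Y_{Q,n}$, so the $\tau^1$-component vanishes identically by its support condition and only $\tau^2$ survives.

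Substituting $y'=\sr_\al[y]$ into the explicit formula for $\tau^2$ in Theorem \ref{thm:local coefficient matrix} gives
\[
\tau(\sr_\al,\ochi,y,\sr_\al[y])=\bfgg_{\psi^{-1}}\bigl(\la (\sr_\al[y])_\rho,\al\ra\,Q(\al^\vee)\bigr).
\]
The remaining step is the identity $(\sr_\al[y])_\rho=\sr_\al(y_\rho)$, which is immediate from the definition $\bw[y]=\bw(y-\rho)+\rho$. Combined with $\sr_\al(\al)=-\al$, this yields
\[
\la (\sr_\al[y])_\rho,\al\ra=\la \sr_\al(y_\rho),\al\ra=\la y_\rho,\sr_\al(\al)\ra=-\la y_\rho,\al\ra,
\]
and the claimed equality follows.

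There is no genuine obstacle: the proof is essentially a one-line specialization of Theorem \ref{thm:local coefficient matrix}. The only point requiring a moment of care is the reindexing (the theorem writes $\gamma=\bfs_{y_1}$, $\gamma'=\bfs_y$, whereas the lemma's notation $\tau(\sr_\al,\ochi,y,y')$ fixes the role of each slot via the convention introduced before Lemma \ref{lem:calcualation of tau 1}) and the $\rho$-shifted Weyl action, both of which are pure bookkeeping. The degenerate case $\la y_\rho,\al\ra=0$ (equivalently $\sr_\al[y]=y$) would introduce a $\tau^1$-contribution, but this case is not needed for the applications in the sequel, since the Weyl orbits considered there are free.
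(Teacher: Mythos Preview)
Your proof is correct and follows essentially the same approach as the paper's: both rewrite the pair $(y,\sr_\al[y])$ so that the first slot is $\sr_\al$ applied to the second, apply the $\tau^2$-formula from Theorem \ref{thm:local coefficient matrix}, and then simplify using $(\sr_\al[y])_\rho=\sr_\al(y_\rho)$ and $\sr_\al(\al)=-\al$. You are in fact slightly more careful than the paper, which silently drops the $\tau^1$-term; your only imprecision is that the condition for $\tau^1$ to potentially contribute is $y\equiv\sr_\al[y]\bmod Y_{Q,n}$, i.e.\ $n_\al\mid\la y_\rho,\al\ra$, rather than merely $\la y_\rho,\al\ra=0$, but as you note this is irrelevant in the free-orbit setting where the lemma is used.
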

\begin{proof}
This is done by direct calculation. Recall $\bw[y]=\bw(y-\rho)+\rho$. The left-hand side is
\[
\begin{aligned}
&\tau(\sr_\al,\ochi,\sr_\al[\sr_\al[y]],\sr_\al[y])\\
=&\bfg_{\psi^{-1}}(\la \sr_\al[y]-\rho,\al\ra Q(\al^\vee))
=\bfg_{\psi^{-1}}(\la y-\rho,\sr_\al(\al)\ra Q(\al^\vee))\\
=&\bfg_{\psi^{-1}}(\la y-\rho,-\al \ra Q(\al^\vee))
=\bfg_{\psi^{-1}}(-\la y_\rho,\al\ra Q(\al^\vee)).
\end{aligned}
\]
\end{proof}

\begin{Lem}\label{lem:5}
If $\ell(\sr_\al \bw)=\ell(\bw)+1$, then $\la \bw[0],\al\ra \leq 0$; if $\ell(\sr_\al \bw)=\ell(\bw)-1$, then $\la \bw[0],\al\ra>0$.
\end{Lem}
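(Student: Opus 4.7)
The plan is to compute $\la \bw[0], \al\ra$ directly from the definition of the shifted Weyl action and then reduce the assertion to the standard length-versus-positivity dichotomy in a Coxeter group. Since $\bw[y] = \bw(y-\rho) + \rho$ by definition, one has $\bw[0] = \rho - \bw(\rho)$, and the Weyl-invariance $\la \bw(y), x\ra = \la y, \bw^{-1}(x)\ra$ of the natural pairing gives
\[
\la \bw[0], \al\ra \; = \; \la \rho, \al\ra - \la \bw(\rho), \al\ra \; = \; \la \rho, \al\ra - \la \rho, \bw^{-1}(\al)\ra.
\]
At this stage I would invoke the standard fact that because $\rho = \tfrac12 \sum_{\al \in \Phi^+} \al^\vee$ equals the sum $\sum_i \varpi_i^\vee$ of fundamental coweights dual to the simple roots, one has $\la \rho, \al\ra = 1$ for every $\al \in \Delta$. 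Hence the quantity of interest simplifies to $\la \bw[0], \al\ra = 1 - \la \rho, \bw^{-1}(\al)\ra$.

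The next step would be to invoke the classical characterization of length in terms of the action on roots: $\ell(\sr_\al \bw) = \ell(\bw) + 1$ iff $\bw^{-1}(\al) \in \Phi^+$, and $\ell(\sr_\al \bw) = \ell(\bw) - 1$ iff $\bw^{-1}(\al) \in \Phi^-$ (see e.g.\ Humphreys, \emph{Reflection Groups and Coxeter Groups}). Combined with the observation that $\la \rho, \beta\ra$ equals the height of $\beta$ (the sum of its simple-root coefficients), I would obtain $\la \rho, \bw^{-1}(\al)\ra \geq 1$ in the first case and $\la \rho, \bw^{-1}(\al)\ra \leq -1$ in the second. Substituting back into the displayed identity yields $\la \bw[0], \al\ra \leq 0$ in the first case and $\la \bw[0], \al\ra \geq 2 > 0$ in the second, which is exactly the statement.

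There is no real obstacle here: the proof is a one-line manipulation followed by two entirely standard facts from Coxeter group theory. The only point of care is to respect the convention adopted in the paper that $\rho$ is the half-sum of positive \emph{coroots} (an element of $Y \otimes \bq$), so that pairing against a simple root gives $1$ rather than $\la \al^\vee, \rho\ra$; once this is fixed, both inequalities follow immediately and with the claimed strict/weak sense.
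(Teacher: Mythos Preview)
Your proof is correct. The first case ($\ell(\sr_\al\bw)=\ell(\bw)+1$) is argued exactly as in the paper: $\bw^{-1}(\al)\in\Phi^+$ gives $\la\rho,\bw^{-1}(\al)\ra\ge 1$, hence $\la\bw[0],\al\ra=1-\la\rho,\bw^{-1}(\al)\ra\le 0$.

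For the second case the two arguments diverge slightly. You apply the same identity $\la\bw[0],\al\ra=1-\la\rho,\bw^{-1}(\al)\ra$ directly, observing that $\bw^{-1}(\al)\in\Phi^-$ forces $\la\rho,\bw^{-1}(\al)\ra\le -1$ and hence $\la\bw[0],\al\ra\ge 2$. The paper instead reduces to the first case: it derives the relation $(\sr_\al\bw)[0]=\sr_\al(\bw[0])\pm\sr_\al[0]$, pairs with $\al$ to get $\la(\sr_\al\bw)[0],\al\ra=-\la\bw[0],\al\ra+2$, and then applies the already-established inequality to $\sr_\al\bw$ (whose length is one less than $\bw$). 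Your route is shorter and avoids that auxiliary identity; both yield the same sharp bound $\la\bw[0],\al\ra\ge 2$.
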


\begin{proof}
If $\ell(\sr_\al \bw)=\ell(\bw)+1$, then $\bw^{-1}(\al)$ is a positive root (\cite{Bump13} Proposition 20.2). This implies that
$\la \bw(\rho),\al\ra=\la \rho,\bw^{-1}(\al)\ra \geq 1$. Note that $\la \rho,\al^\vee\ra=1$. Thus
\[
\la \rho-\bw(\rho),\al\ra=\la \rho,\al\ra-\la \bw(\rho),\al\ra\leq 0.
\]

We now consider the other case. Note
\[
(\sr_\al \bw)[0]=\sr_\al \bw(-\rho)+\rho=\sr_\al(\bw(-\rho)+\rho)-\sr_\al(\rho)+\rho =\sr_\al(\bw[0])-\sr_\al[0].
\]
If $\ell(\sr_\al \bw)=\ell(\bw)-1$ or $\ell(\sr_\al \bw)+1=\ell(\bw)$, then $\la (\sr_\al \bw)[0],\al\ra\leq 0$. But
\[
\la (\sr_\al \bw)[0],\al\ra=\la \sr_\al(\bw[0])-\sr_\al[0],\al\ra=-\la \bw[0],\al\ra+2.
\]
Here we use $\la \sr_\al[0],\al\ra =-2$.  This gives the desired result.
\end{proof}

\subsection{Proof of Proposition \ref{prop:tau in the stable range}}
We first check some small rank cases. If $r=1$, then both sides are $1$. If $r=2$, we only have two Weyl group elements two consider. If $\bw=\id$, then
\[
\tau(\bw_G,{}^{\bw_G}\ochi,\bw[0],0)=\dfrac{1-q^{-1}}{1-q^{-1}}=1;
\]
if $\bw=\bw_G$, then clearly
\[
\tau(\bw_G,{}^{\bw_G}\ochi,\bw_G[0],0)=g(\bw_G,0).
\]

Our proof is a simplified version of the proof of \cite{Suzuki97} Lemma 4.2.
We now assume that the result is true for $r$ and prove it for $r+1$. We first apply the inductive formula in Proposition \ref{prop:inductive 1 in general linear}.  Observe the following:

\begin{itemize}
\item Recall that $\bw^{-1}[\bw[0]]=0$ and we write $\bw^{-1}=\bw^{'-1}\sr_r\cdots \sr_{r_0}$ for a unique integer $r_0$ and $\bw'\in W(\bm)$.
  \item We are working with the exceptional representation. If $i\neq 1$, then $\prod_{j=1}^{i-1}\frac{1-q^{-1}\ochi_{ji}}{1-\ochi_{ji}}=0$ since $1-q^{-1}\ochi_{i-1,i}=0$. Thus only one term $(i=1)$ in the outer summation is nonzero.
\end{itemize}
We obtain
\[
\cw_{y}(0,\ochi)=\sum_{y'}
\tau(\sr_1\cdots \sr_r,{}^{\sr_r\cdots \sr_1}\ochi,y,y')
\cw_{M,y'}(0,{}^{\sr_r\cdots \sr_1}\ochi),
\]
where the sum is over the set
\[
\begin{aligned}
&\left\{
\sr_r\cdots \sr_{r_0} \sr_{r_0-2}^{a_{r_0-2}}\cdots \sr_i^{a_1}[y]: a_1,\cdots,a_{r_0-2}\in \{0,1\}
\right\}\\
=&\left\{
\sr_{r_0-2}^{a_{r_0-2}}\cdots \sr_i^{a_1}\bw'[0]: a_1,\cdots,a_{r_0-2}\in \{0,1\}
\right\}.\\
\end{aligned}
\]
Since the orbit of $0$ (hence $y$) is free, this set has no repetition for different $(a_1,\cdots,a_{r_0-2})$.
Thus we now obtain
\[
\cw_{y}(0,\ochi)=\sum_{a_1\in \{0,1\}}\cdots \sum_{a_{r_0-2}\in \{0,1\}} \tau(\sr_1\cdots \sr_r,{}^{\sr_r\cdots \sr_1}\ochi,  y,y')
\cw_{M,y'}(0,{}^{\sr_r\cdots \sr_1}\ochi)
\]
where
\[
y'=\sr_{r}\cdots \sr_{r_0}\sr_{r_0-2}^{a_{r_0-2}}\cdots \sr_1^{a_1} \bw[0]=\sr_{r_0-2}^{a_{r_0-2}}\cdots \sr_1^{a_1} \bw'[0].
\]

For $y'=\sr_{r_0-2}^{a_{r_0-2}}\cdots \sr_1^{a_1} \bw'[0]$, using Lemma \ref{lem:tau unique decomposition}, we see that $\tau(\sr_1\cdots \sr_r,{}^{\sr_r\cdots \sr_1}\ochi,  y,y')$ is the product of the following three terms:
\begin{itemize}
  \item $\tau(\sr_1\cdots \sr_{r_0-2},~ {}^{\sr_{r_0-2}\cdots \sr_1}\ochi,~ y,~\sr_{r_0-2}^{a_{r_0-2}}\cdots \sr_1^{a_1} \bw[0])$,
  \item $\tau(\sr_{r_0-1},~ {}^{\sr_{r_0-1}\cdots \sr_1}\ochi, ~\sr_{r_0-2}^{a_{r_0-2}}\cdots \sr_1^{a_1} \bw[0], ~\sr_{r_0-2}^{a_{r_0-2}}\cdots \sr_1^{a_1} \bw[0])$,
  \item $\tau(\sr_{r_0}\cdots \sr_{r}, ~ {}^{\sr_r\cdots \sr_1}\ochi,~ \sr_{r_0-2}^{a_{r_0-2}}\cdots \sr_1^{a_1} \bw[0], ~ y')$.
\end{itemize}

We now analyze each term. We start with $\cw_{M,y'}(0,{}^{\sr_r\cdots \sr_1}\ochi)$.

\begin{Lem}
We have
\[
\begin{aligned}
&\cw_{M,y'}(0,{}^{\sr_r\cdots \sr_1}\ochi)\\
=&g(\sr_{r_0-2}^{a_{r_0-2}}\cdots \sr_1^{a_1} \bw',0).\\
\end{aligned}
\]
\end{Lem}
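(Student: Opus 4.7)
The plan is to reduce the Whittaker function on $\oM = \overline{\GL_r\times\GL_1}$ to one on $\overline{\GL}_r$ alone, and then invoke Lemma \ref{lem:3} together with the inductive hypothesis of Proposition \ref{prop:tau in the stable range} at rank $r$. Setting $\tilde\bw := \sr_{r_0-2}^{a_{r_0-2}}\cdots \sr_1^{a_1}\bw' \in W(\bm)=W(\GL_r)$, one has $y' = \tilde\bw[0]$. Because $W(\bm)$ fixes the $Y_2$-component, the $\GL_1$-coordinate of $y'$ vanishes, and Lemma \ref{lem:m to gl on w} yields
\[
\cw_{M,y'}(0,{}^{\sr_r\cdots\sr_1}\ochi) = \cw_{\GL_r,\tilde\bw[0]}(0,\ochi_1)
\]
for any unramified character $\ochi_1$ of $\overline{T}_{\GL_r}$ whose restriction to $\oT_{1,Q,n}^{sc}$ agrees with $({}^{\sr_r\cdots\sr_1}\ochi)|_{\oT_{Q,n}^{sc}}$.

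Next I would verify that $\ochi_1$ can be chosen anti-exceptional for $\overline{\GL}_r$. The structural input is that $\sr_1\cdots\sr_r$ acts as the cyclic shift in $S_{r+1}$, and hence maps $\al_i^\vee$ to $\al_{i+1}^\vee$ for $1\leq i \leq r-1$. Combining this with the $\sr$-conjugation formulas for $\oT$ recalled in Section \ref{sec:preliminaries}, and using that $(\varpi,\varpi)_n=1$ kills the Hilbert-symbol corrections arising on powers of $\varpi$, one finds
\[
\ochi_1(\oh_{\al_i}(\varpi^{n_Q})) = \ochi(\oh_{\al_{i+1}}(\varpi^{n_Q})) = q
\]
for every simple root $\al_i$ of $\GL_r$, so $\ochi_1$ is anti-exceptional. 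Since $n_Q \geq r+1 > r$ the orbit of $0$ under $W(\GL_r)$ is free, and Lemma \ref{lem:3} inside $\overline{\GL}_r$ gives
\[
\cw_{\GL_r,\tilde\bw[0]}(0,\ochi_1) = \tau(\bw_{\GL_r},{}^{\bw_{\GL_r}}\ochi_1,\tilde\bw[0],0),
\]
where $\bw_{\GL_r}$ denotes the long element of $W(\GL_r)$. The inductive hypothesis, i.e.\ Proposition \ref{prop:tau in the stable range} at rank $r$, identifies the right-hand side with $g_{\GL_r}(\tilde\bw,0)$.

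To finish, I would show $g_{\GL_r}(\tilde\bw,0) = g(\tilde\bw,0)$, where the latter is the Gauss sum computed inside $\GL_{r+1}$. Since $\tilde\bw \in W(\GL_r)$, every $\al \in \Phi(\tilde\bw)$ lies in $\Phi_M^+$; writing $\rho^M := \rho_{\GL_{r+1}} - \rho_{\GL_r}$, the fact that $\rho^M$ is $W(\bm)$-invariant forces $\la \rho^M,\al\ra = 0$ on such roots, so the shift $y_\rho = y-\rho$ produces the same value of $\la 0_\rho,\al\ra Q(\al^\vee)$ in either rank, and the product formula for $g$ gives the desired equality.

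The main obstacle is the anti-exceptionality step: transporting the equation $\ochi(\oh_{\al_j}(\varpi^{n_Q}))=q$ from $\GL_{r+1}$ through the Weyl twist by $\sr_r\cdots\sr_1$. The combinatorial core is the coroot cyclic-shift identity, but one must simultaneously track the Brylinski-Deligne cocycle contributions produced by $\sr$-conjugation on $\oT$; the assumption $(\varpi,\varpi)_n=1$ is essential to ensure that these corrections vanish on the specific powers of $\varpi$ that appear.
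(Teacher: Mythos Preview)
Your proposal is correct and follows essentially the same approach as the paper's own proof. The paper argues tersely—apply Lemma \ref{lem:m to gl on w}, observe that ${}^{\sr_r\cdots\sr_1}\ochi$ restricted to $\oT_{\GL_r,Q,n}^{sc}$ is again anti-exceptional, and invoke the inductive hypothesis—whereas you unpack each of these steps: you verify anti-exceptionality via the cyclic-shift identity $\sr_1\cdots\sr_r(\al_i^\vee)=\al_{i+1}^\vee$ (equivalently, the relation $({}^{\bw}\ochi)_\al=\ochi_{\bw^{-1}(\al)}$ already used in the proof of Proposition \ref{prop:inductive formula}), and you explicitly compare $g_{\GL_r}(\tilde\bw,0)$ with $g(\tilde\bw,0)$, a point the paper leaves implicit.
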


\begin{proof}
Here we apply Lemma \ref{lem:m to gl on w}. We observe that the character ${}^{\sr_r\cdots \sr_1}\ochi$ restricted to $\oT_{\GL_r,Q,n}^{sc}$ is again an anti-exceptional character. So we can apply induction to calculate the value. It is $g(\sr_{r_0-2}^{a_{r_0-2}}\cdots \sr_1^{a_1} \bw',0)$.
\end{proof}

\begin{Lem}
We have
\[
\tau(\sr_{r_0-1},{}^{\sr_{r_0-1}\cdots \sr_1}\ochi,\sr_{r_0-2}^{a_{r_0-2}}\cdots \sr_1^{a_1} \bw[0],\sr_{r_0-2}^{a_{r_0-2}}\cdots \sr_1^{a_1} \bw[0])
=\dfrac{1-q^{-1}}{1-q^{-(r_0-1)}}.
\]
\end{Lem}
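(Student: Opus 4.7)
The plan is to apply Theorem \ref{thm:local coefficient matrix} directly: it decomposes the coefficient as $\tau=\tau^1+\tau^2$, where $\tau^2$ is the off-diagonal Gauss-sum contribution that is nonzero only when $y\equiv \sr_{r_0-1}[y]\bmod Y_{Q,n}$. I will show that under the stable-range hypothesis the off-diagonal piece vanishes, and then extract the claimed value from $\tau^1$ by identifying the twisted character $({}^{\sr_{r_0-1}\cdots\sr_1}\ochi)_{\al_{r_0-1}}$ and verifying that the ceiling exponent $k_{y,\al_{r_0-1}}$ is zero.

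To handle both pieces, set $\tilde\bw:=\sr_{r_0-2}^{a_{r_0-2}}\cdots\sr_1^{a_1}\bw$, so that $y=\tilde\bw[0]$ and $y-\rho=-\tilde\bw(\rho)$. Since the reflections $\sr_1,\ldots,\sr_{r_0-2}$ fix $e_{r_0}$ and only shuffle among $\{e_1,\ldots,e_{r_0-1}\}$, one has $\sr_1^{a_1}\cdots\sr_{r_0-2}^{a_{r_0-2}}(\al_{r_0-1})=e_j-e_{r_0}$ for some $j\in\{1,\ldots,r_0-1\}$ depending on the $a_i$. Using $\bw^{-1}=\bw'^{-1}\sr_r\cdots\sr_{r_0}$, together with the direct check that the permutation $\sr_r\sr_{r-1}\cdots\sr_{r_0}$ fixes $e_j$ (as $j<r_0$) and sends $e_{r_0}\mapsto e_{r+1}$, and that $\bw'^{-1}\in S_r$ fixes $e_{r+1}$, we obtain $\tilde\bw^{-1}(\al_{r_0-1})=e_{\sigma(j)}-e_{r+1}$ with $\sigma:=\bw'^{-1}$. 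Pairing with $\rho$ then gives $\langle y-\rho,\al_{r_0-1}\rangle=\sigma(j)-(r+1)\in\{-r,\ldots,-1\}$. Because $\sr_{r_0-1}[y]-y=-\langle y-\rho,\al_{r_0-1}\rangle\al_{r_0-1}^\vee$ and $n_Q\geq r+1$ does not divide any integer in $\{-r,\ldots,-1\}$, this forces $\sr_{r_0-1}[y]\not\equiv y\bmod Y_{Q,n}$, and so $\tau^2=0$.

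For $\tau^1$, the identity $({}^{\bw}\ochi)_\al=\ochi_{\bw^{-1}(\al)}$ established in the proof of Proposition \ref{prop:inductive formula}, combined with the type-$A$ computation $(\sr_{r_0-1}\cdots\sr_1)^{-1}(\al_{r_0-1})=\sr_1\sr_2\cdots\sr_{r_0-1}(\al_{r_0-1})=-(\al_1+\cdots+\al_{r_0-1})$, identifies $({}^{\sr_{r_0-1}\cdots\sr_1}\ochi)_{\al_{r_0-1}}=\ochi_{-(\al_1+\cdots+\al_{r_0-1})}$. Since $\ochi$ is anti-exceptional ($\ochi_{\al_i}=q$ for each $i$) and $\ochi_\al$ is multiplicative in $\al$ because $\bfs_{y_1+y_2}=\bfs_{y_1}\bfs_{y_2}$ (owing to $(\varpi,\varpi)_n=1$) and $\eta$ is a homomorphism, this value equals $q^{-(r_0-1)}$. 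On the other hand, $\langle y,\al_{r_0-1}\rangle=\langle y-\rho,\al_{r_0-1}\rangle+1=\sigma(j)-r$, which lies in $\{1-r,\ldots,0\}\subset(-n_Q,0]$ because $n_Q\geq r+1$, so $k_{y,\al_{r_0-1}}=\lceil(\sigma(j)-r)/n_Q\rceil=0$. Substituting into Theorem \ref{thm:local coefficient matrix} yields $\tau^1=(1-q^{-1})\cdot 1/(1-q^{-(r_0-1)})=(1-q^{-1})/(1-q^{-(r_0-1)})$, as claimed.

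The hard part will be the permutation bookkeeping in the middle step: one must carefully track how $\al_{r_0-1}=e_{r_0-1}-e_{r_0}$ is transformed by the product $\tilde\bw^{-1}=\bw'^{-1}\sr_r\cdots\sr_{r_0}\sr_1^{a_1}\cdots\sr_{r_0-2}^{a_{r_0-2}}$, and to verify that the resulting value of $\langle y,\al_{r_0-1}\rangle$ always lands in $\{1-r,\ldots,0\}$ regardless of the choices of $a_i\in\{0,1\}$ and $\bw'\in W(\bm)$. The assumption $n_Q\geq r+1$ (stable range) is what ultimately ensures both that this range is contained in $(-n_Q,0]$ and that the shift $\sigma(j)-(r+1)\in\{-r,\ldots,-1\}$ is never a nonzero multiple of $n_Q$.
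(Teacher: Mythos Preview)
Your proof is correct and follows the same strategy as the paper: apply Theorem~\ref{thm:local coefficient matrix} directly, rule out the $\tau^2$ contribution, and evaluate $\tau^1$ by showing $k_{y,\al_{r_0-1}}=0$ and identifying the twisted character value. The paper's execution differs only in how it obtains the inequality $\langle y,\al_{r_0-1}\rangle\leq 0$: rather than your explicit permutation computation of $\tilde\bw^{-1}(\al_{r_0-1})=e_{\sigma(j)}-e_{r+1}$, the paper rewrites $\tilde\bw=\sr_{r_0}\cdots\sr_r\bw''$ with $\bw''\in W(\bgl_r)$ (using that $\sr_1,\ldots,\sr_{r_0-2}$ commute past $\sr_{r_0},\ldots,\sr_r$) and then invokes Lemma~\ref{lem:5}. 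Your route has the advantage of producing the exact value $\langle y,\al_{r_0-1}\rangle=\sigma(j)-r\in\{1-r,\ldots,0\}$, which simultaneously gives the lower bound needed for $k=0$ and the non-congruence needed for $\tau^2=0$; the paper leaves both of these, as well as the identification $({}^{\sr_{r_0-1}\cdots\sr_1}\ochi)_{\al_{r_0-1}}=q^{-(r_0-1)}$, implicit.
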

\begin{proof}
Note $\sr_{r_0-2}^{a_{r_0-2}}\cdots \sr_1^{a_1} \bw$ is of the form $\sr_{r_0}\cdots \sr_{r}\bw''$ with $\bw''\in W(\bgl_r)$. 
Lemma \ref{lem:5} shows that $\la \sr_{r_0}\cdots \sr_{r-1}\bw''[0],\al_{r_0-1}\ra<0$. Therefore $k_{\sr_{r_0}\cdots \sr_{r-1}\bw''[0],\al_{r_0-1}}=0$ and
\[
\tau(\sr_{r_0-1},{}^{\sr_{r_0-1}\cdots \sr_1}\ochi,\sr_{r_0-2}^{a_{r_0-2}}\cdots \sr_1^{a_1} \bw[0],\sr_{r_0-2}^{a_{r_0-2}}\cdots \sr_1^{a_1} \bw[0])
=\dfrac{1-q^{-1}}{1-q^{-(r_0-1)}}.
\]
\end{proof}

\begin{Lem}
We have
\[
\tau(\sr_{r_0}\cdots \sr_{r},{}^{\sr_r\cdots \sr_1}\ochi,\sr_{r_0-2}^{a_{r_0-2}}\cdots \sr_1^{a_1} \bw[0],y')=g(\sr_{r_0}\cdots \sr_{r},\bw'[0]).
\]
\end{Lem}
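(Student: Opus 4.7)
The plan is to apply Lemma \ref{lem:tau unique decomposition} to factor the left-hand side along the reduced decomposition $\sr_{r_0} \sr_{r_0+1} \cdots \sr_r$, read each factor as a Gauss sum via Lemma \ref{lem:1}, and then match the result with the iterated expansion of $g(\sr_{r_0} \cdots \sr_r, \bw'[0])$ given by its inductive definition.

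The first move is a rewrite. The reflections $\sr_j$ for $j \leq r_0-2$ commute with $\sr_i$ for $i \geq r_0$ (indices differ by at least two), and from $\bw^{-1} = \bw'^{-1}\sr_r \cdots \sr_{r_0}$ we have $\bw = \sr_{r_0} \cdots \sr_r \bw'$. Therefore
\[
\sr_{r_0-2}^{a_{r_0-2}} \cdots \sr_1^{a_1} \bw[0] = \sr_{r_0} \cdots \sr_r \bigl[\sr_{r_0-2}^{a_{r_0-2}} \cdots \sr_1^{a_1} \bw'[0]\bigr] = \sr_{r_0} \cdots \sr_r[y'],
\]
and the quantity to compute becomes $\tau(\sr_{r_0} \cdots \sr_r, {}^{\sr_r \cdots \sr_1}\ochi, \sr_{r_0} \cdots \sr_r[y'], y')$.

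I then apply Lemma \ref{lem:tau unique decomposition} to the reduced decomposition $\sr_{r_0} \sr_{r_0+1} \cdots \sr_r$. The orbit of $\sr_{r_0} \cdots \sr_r[y']$ is free under $W$, since it lies in $W \cdot 0$ and we are in the inductive setting with $n_Q \geq r+1$. Moreover, all $2^{r-r_0+1}$ subexpressions of the staircase word $\sr_{r_0} \sr_{r_0+1} \cdots \sr_r$ yield distinct Weyl elements: this is a standard combinatorial fact about products of adjacent transpositions of strictly increasing indices in the symmetric group, in the same spirit as the remark following Lemma \ref{lem:tau unique decomposition}. Applied with all $a_j = 1$, the lemma produces a telescoping product $\prod_{j=1}^{r-r_0+1} \tau(\sr_{r_0+j-1}, \cdots, x_j, \sr_{r_0+j-1}[x_j])$ where $x_1 = \sr_{r_0} \cdots \sr_r[y']$ and $x_{j+1} = \sr_{r_0+j-1}[x_j]$; by Lemma \ref{lem:1} each factor equals $\bfg_{\psi^{-1}}(-\la (x_j)_\rho, \al_{r_0+j-1} \ra Q(\al^\vee))$.

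Matching with $g(\sr_{r_0} \cdots \sr_r, \bw'[0])$ is then a direct calculation. Using $\bw[y]_\rho = \bw(y_\rho)$ and the type-$A$ identity $\sr_{r_0} \sr_{r_0+1} \cdots \sr_{r_0+j-2}(\al_{r_0+j-1}) = e_{r_0} - e_{r_0+j}$, each factor reads $\bfg_{\psi^{-1}}(\la z_\rho, e_{r_0+j} - e_{r_0} \ra Q(\al^\vee))$ with $z = \sr_{r_0} \cdots \sr_r[y']$. Pushing through the explicit cycle $\sr_r \cdots \sr_{r_0} : e_{r_0} \mapsto e_{r+1}, \ e_i \mapsto e_{i-1}$ for $i > r_0$, this becomes $\bfg_{\psi^{-1}}(\la y'_\rho, e_{r_0+j-1} - e_{r+1} \ra Q(\al^\vee))$. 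Finally, because $\sr_1, \ldots, \sr_{r_0-2}$ fix both $e_i$ for $i \geq r_0$ and $e_{r+1}$, one may replace $y'_\rho$ by $\bw'[0]_\rho$ without altering any factor. The resulting product $\prod_{i=r_0}^r \bfg_{\psi^{-1}}(\la \bw'[0]_\rho, e_i - e_{r+1} \ra Q(\al^\vee))$ is exactly what iterating $g(\bw_1 \bw_2, y) = g(\bw_1, \bw_2[y]) g(\bw_2, y)$ gives for $g(\sr_{r_0} \cdots \sr_r, \bw'[0])$. The delicate step is confirming the subexpression-distinctness hypothesis of Lemma \ref{lem:tau unique decomposition}; once that is secure, the rest is bookkeeping with the $\rho$-shifted Weyl action and the cycle formula in type $A$.
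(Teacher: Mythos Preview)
Your argument is correct and follows essentially the same route as the paper: both rewrite the third entry as $\sr_{r_0}\cdots\sr_r[y']$, invoke Lemma~\ref{lem:tau unique decomposition} on the staircase word, and identify each simple-reflection factor as a Gauss sum. The only difference is ordering: the paper first uses the disjointness of $\sr_{r_0},\ldots,\sr_r$ and $\sr_1,\ldots,\sr_{r_0-2}$ to replace $y'$ by $\bw'[0]$ \emph{before} factoring (so each factor is immediately $g(\sr_j,\sr_{j+1}\cdots\sr_r\bw'[0])$), whereas you factor first with $y'$ in place and then carry out an explicit type-$A$ coordinate computation to see that the pairings $\la y'_\rho, e_i-e_{r+1}\ra$ are unaffected by $\sr_1,\ldots,\sr_{r_0-2}$. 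Your version is a bit more hands-on but has the virtue of making the ``disjoint actions'' step completely transparent at the level of individual roots.
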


\begin{proof}
Since the action of $\sr_{r_0}\cdots \sr_{r}$ and $\sr_{r_0-2}^{a_{r_0-2}}\cdots \sr_1^{a_1}$ are disjoint, we have
\[
\begin{aligned}
&\tau(\sr_{r_0}\cdots \sr_{r},{}^{\sr_r\cdots \sr_1}\ochi,\sr_{r_0-2}^{a_{r_0-2}}\cdots \sr_1^{a_1} \bw[0],y')\\
=&\tau(\sr_{r_0}\cdots \sr_{r},{}^{\sr_r\cdots \sr_1}\ochi,\sr_{r_0}\cdots \sr_{r}\bw'[0],\bw'[0]).
\end{aligned}
\]
We can now use Lemma \ref{lem:tau unique decomposition} to calculate
\[
\begin{aligned}
&\tau(\sr_{r_0}\cdots \sr_{r},{}^{\sr_r\cdots \sr_1}\ochi,\sr_{r_0}\cdots \sr_{r}\bw'[0],\bw'[0])\\
=&\tau(\sr_{r_0},{}^{\sr_{r_0}\cdots \sr_1}\ochi,\sr_{r_0}\cdots \sr_{r}\bw'[0],\sr_{r_0-1}\cdots \sr_{r}\bw'[0])
\cdots \tau(\sr_{r},{}^{\sr_r\cdots \sr_1}\ochi,\sr_{r}\bw'[0],\bw'[0])\\
=&g(\sr_{r_0},\sr_{r_0-1}\cdots \sr_{r}\bw'[0])\cdots g(\sr_{r},\bw'[0])\\
=&g(\sr_{r_0}\cdots \sr_{r},\bw'[0]).
\end{aligned}
\]
\end{proof}

Let us summarize what we have done so far. Let us rewrite
\[
g(\sr_{r_0-2}^{a_{r_0-2}}\cdots \sr_1^{a_1} \bw',0)
= \dfrac{g(\sr_{r_0-2}^{a_{r_0-2}}\cdots \sr_1^{a_1} \bw',0)}{g(\bw',0)}g(\bw',0).
\]
and use
\[
g(\bw,0)=g(\sr_{r_0}\cdots \sr_{r},\bw'[0])g(\bw',0).
\]
By the above results, we deduce that
\begin{equation}\label{eq:rewrite as a sum}
\begin{aligned}
&\cw_{y}(0,\ochi)=g(\bw,0)\dfrac{1-q^{-1}}{1-q^{-(r_0-1)}}\\
\cdot&\sum_{a_1\in \{0,1\}}\cdots \sum_{a_{r_0-2}\in \{0,1\}} \tau (\sr_1\cdots \sr_{r_0-2},{}^{\sr_{r_0-2}\cdots \sr_1}\ochi,y,\sr_{r_0-2}^{a_{r_0-2}}\cdots \sr_1^{a_1} \bw[0])
\dfrac{g(\sr_{r_0-2}^{a_{r_0-2}}\cdots \sr_1^{a_1} \bw',0)}{g(\bw',0)}.
\end{aligned}
\end{equation}

Thus, it remains to the summation in the second line. Notice that
\[
\dfrac{g(\sr_{r_0-2}^{a_{r_0-2}}\cdots \sr_1^{a_1} \bw',0)}{g(\bw',0)}=\dfrac{g(\sr_{r_0-2}^{a_{r_0-2}}\cdots \sr_1^{a_1} \bw,0)}{g(\bw,0)}.
\]
We now rewrite the summation as
\begin{equation}\label{eq:break the sum}
\begin{aligned}
&\sum_{a_1\in \{0,1\}}\cdots \sum_{a_{r_0-3}\in \{0,1\}}
\tau(\sr_1\cdots \sr_{r_0-3},{}^{\sr_{r_0-3}\cdots \sr_1}\ochi,y,\sr_{r_0-3}^{i_{r_0-3}}\cdots \sr_1^{a_1} \bw[0]) \dfrac{g(\sr_{r_0-3}^{a_{r_0-3}}\cdots \sr_1^{a_1} \bw,0)}{g(\bw,0)} \\
\cdot &\sum_{a_{r_0-2}\in \{0,1\}}
\tau(\sr_{r_0-2},{}^{\sr_{r_0-2}\cdots \sr_1}\ochi, \sr_{r_0-3}^{i_{r_0-3}}\cdots \sr_1^{a_1} \bw[0],\sr_{r_0-2}^{a_{r_0-2}}\cdots \sr_1^{a_1} \bw[0])
\dfrac{g(\sr_{r_0-2}^{a_{r_0-2}}\cdots \sr_1^{a_1} \bw,0)}{g(\sr_{r_0-3}^{a_{r_0-3}}\cdots \sr_1^{a_1} \bw,0)}.
\end{aligned}
\end{equation}
We first calculate the inner sum.

\begin{Lem}
The inner sum in \eqref{eq:break the sum} is equal to $\dfrac{1-q^{-(r_0-1)}}{1-q^{-(r_0-2)}}$.
\end{Lem}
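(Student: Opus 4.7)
Write $\bw^\circ = \sr_{r_0-3}^{a_{r_0-3}}\cdots\sr_1^{a_1}\bw$ and $y^\circ = \bw^\circ[0]$, and set $a = \la y^\circ_\rho,\al_{r_0-2}\ra Q(\al^\vee)$. The inner sum has exactly two terms, indexed by $a_{r_0-2}\in\{0,1\}$; the plan is to compute each explicitly and add. Two preliminary reductions drive the whole calculation. First, iterating the braid relation $\sr_i(\al_{i+1})=\al_i+\al_{i+1}$ in type $A$ gives $\sr_1\sr_2\cdots\sr_{r_0-2}(\al_{r_0-2}) = -(\al_1+\cdots+\al_{r_0-2})$, so the relation $({}^{\bw}\ochi)_\al = \ochi_{\bw^{-1}(\al)}$ combined with anti-exceptionality (and the fact that in the simply-laced setting $\ochi_{\al_1+\cdots+\al_{r_0-2}} = \prod_i \ochi_{\al_i} = q^{r_0-2}$) yields $({}^{\sr_{r_0-2}\cdots\sr_1}\ochi)(\bar h_{\al_{r_0-2}}(\varpi^{n_Q})) = q^{-(r_0-2)}$. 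Second, the standing assumption $\mu_{2n}\subset F^\times$ forces $\varepsilon=(-1,\varpi)_n=1$, so the norm relation $|G_{\psi^{-1}}(a,-1)|^2 = q^{-1}$ gives the clean identity $\bfgg_{\psi^{-1}}(a)\bfgg_{\psi^{-1}}(-a) = q^{-1}$ for $n\nmid a$; this divisibility condition is automatic by the freeness of the $W$-orbit of $0$.

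\textbf{Case split on lengths.} Whether $\ell(\sr_{r_0-2}\bw^\circ) = \ell(\bw^\circ)\pm 1$ depends on the specific values $(a_1,\ldots,a_{r_0-3})$, so I treat both cases and verify they produce the same result. In the additive case $\ell(\sr_{r_0-2}\bw^\circ)=\ell(\bw^\circ)+1$: Lemma \ref{lem:5} forces $\la y^\circ,\al_{r_0-2}\ra\leq 0$, and combined with the elementary bound $|\la\bw^\circ[0],\al\ra|\leq r$ in type $A_r$ and the inductive hypothesis $n_Q\geq r+1$, this forces $k_{y^\circ,\al_{r_0-2}}=0$. Plugging into Theorem \ref{thm:local coefficient matrix}, the $a_{r_0-2}=0$ term evaluates to $\tfrac{1-q^{-1}}{1-q^{-(r_0-2)}}$. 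For $a_{r_0-2}=1$, Lemma \ref{lem:1} gives $\tau=\bfgg_{\psi^{-1}}(-a)$, while length-additivity of $g$ gives the ratio $g(\sr_{r_0-2}\bw^\circ,0)/g(\bw^\circ,0) = g(\sr_{r_0-2},y^\circ) = \bfgg_{\psi^{-1}}(a)$; their product is $q^{-1}$ by the preliminary. The inner sum then equals
\[
\frac{1-q^{-1}}{1-q^{-(r_0-2)}} + q^{-1} = \frac{(1-q^{-1}) + q^{-1}(1-q^{-(r_0-2)})}{1-q^{-(r_0-2)}} = \frac{1-q^{-(r_0-1)}}{1-q^{-(r_0-2)}}.
\]

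\textbf{The subtractive case and conclusion.} In the case $\ell(\sr_{r_0-2}\bw^\circ)=\ell(\bw^\circ)-1$, Lemma \ref{lem:5} instead gives $\la y^\circ,\al_{r_0-2}\ra>0$, and the same bound forces $k_{y^\circ,\al_{r_0-2}}=1$, so the $a_{r_0-2}=0$ term becomes $\tfrac{(1-q^{-1})q^{-(r_0-2)}}{1-q^{-(r_0-2)}}$. Applying $g$-multiplicativity instead to the length-additive product $\bw^\circ = \sr_{r_0-2}(\sr_{r_0-2}\bw^\circ)$ computes the ratio as $1/\bfgg_{\psi^{-1}}(-a)$, so the $a_{r_0-2}=1$ contribution collapses to $\tau^2\cdot (g\text{-ratio}) = 1$. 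The resulting sum $\tfrac{(1-q^{-1})q^{-(r_0-2)}}{1-q^{-(r_0-2)}}+1$ simplifies again to $\tfrac{1-q^{-(r_0-1)}}{1-q^{-(r_0-2)}}$, giving the claimed value uniformly. The principal technical hurdle is exactly this case split: the length of $\sr_{r_0-2}\bw^\circ$ is not constant over the outer parameters, but the two branches conspire to produce the same clean identity, and this conspiracy depends crucially on the $\varepsilon=1$ simplification of the Gauss-sum square.
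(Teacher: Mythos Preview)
Your proof is correct and follows essentially the same approach as the paper: both split into two cases according to whether $\ell(\sr_{r_0-2}\bw^\circ)=\ell(\bw^\circ)\pm 1$, invoke Lemma~\ref{lem:5} to determine $k_{y^\circ,\al_{r_0-2}}\in\{0,1\}$, and then sum the two $a_{r_0-2}$-contributions. You are in fact more explicit than the paper in two places---you compute $({}^{\sr_{r_0-2}\cdots\sr_1}\ochi)_{\al_{r_0-2}}=q^{-(r_0-2)}$ directly and you justify the Gauss-sum identity $\bfgg_{\psi^{-1}}(a)\bfgg_{\psi^{-1}}(-a)=q^{-1}$ via $\varepsilon=1$, both of which the paper uses without comment; one minor imprecision is that your bound $|\la\bw^\circ[0],\al\ra|\leq r$ should read $\leq r+1$, but this does not affect the conclusion since $r+1\leq n_Q$ in the inductive step.
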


\begin{proof}
To calculate the inner sum, there are two cases to consider. (Note that this discussion does not appear in \cite{Suzuki97} Sect. 4.2.)

For ease of notation, we write $\tilde y=\sr_{r_0-3}^{i_{r_0-3}}\cdots \sr_1^{a_1} \bw[0]$.
Clearly, $k_{\tilde y,\al_{r_0-2}}$ is either $0$ or $1$. We have two cases to consider.

\textit{Case 1}: $\ell(\sr_{r_0-2}\sr_{r_0-3}^{i_{r_0-3}}\cdots \sr_1^{a_1} \bw)=\ell(\sr_{r_0-3}^{i_{r_0-3}}\cdots \sr_1^{a_1} \bw)+1$.

When $a_{r_0-2}=0$,
\[
\dfrac{g(\sr_{r_0-2}^{a_{r_0-2}}\cdots \sr_1^{a_1} \bw,0)}{g(\sr_{r_0-3}^{a_{r_0-3}}\cdots \sr_1^{a_1} \bw,0)}=1.
\]
By Lemma \ref{lem:5}, $\la \tilde y,\al_{r_0-2}\ra\leq 0$ and $k_{\tilde y,\al_{r_0-2}}=0$. Thus,
\[
\tau(\sr_{r_0-2},{}^{\sr_{r_0-2}\cdots \sr_1}\ochi,\sr_{r_0-3}^{a_{r_0-3}}\cdots \sr_1^{a_1} \bw[0],\sr_{r_0-2}^{a_{r_0-2}}\cdots \sr_1^{a_1} \bw[0])
=\dfrac{1-q^{-1}}{1-q^{-(r_0-2)}}.
\]
When $a_{r_0-2}=1$,
\[
\dfrac{g(\sr_{r_0-2}^{a_{r_0-2}}\cdots \sr_1^{a_1} \bw,0)}{g(\sr_{r_0-3}^{a_{r_0-3}}\cdots \sr_1^{a_1} \bw,0)}=g(\sr_{r_0-2},\sr_{r_0-3}^{i_{r_0-3}}\cdots \sr_1^{a_1} \bw[0])=\bfgg_{\psi^{-1}}(\la \tilde y_\rho,\al_{r_0-2}\ra Q(\al^\vee)),
\]
and
\[
\tau(\sr_{r_0-2},{}^{\sr_{r_0-2}\cdots \sr_1}\ochi,\sr_{r_0-3}^{i_{r_0-3}}\cdots \sr_1^{a_1} \bw[0],\sr_{r_0-2}^{a_{r_0-2}}\cdots \sr_1^{a_1} \bw[0]
)=\bfgg_{\psi^{-1}}(-\la \tilde y_\rho,\al_{r_0-2}\ra Q(\al^\vee)).
\]
Thus  the inner sum in \eqref{eq:break the sum} is
\[
\begin{aligned}
&\dfrac{1-q^{-1}}{1-q^{-(r_0-2)}}\cdot 1 +\bfgg_{\psi^{-1}}(\la \tilde y_\rho,\al_{r_0-2}\ra Q(\al^\vee))\cdot \bfgg_{\psi^{-1}}(-\la \tilde y_\rho,\al_{r_0-2}\ra Q(\al^\vee))\\
=&\dfrac{1-q^{-1}}{1-q^{-(r_0-2)}}+q^{-1}=\dfrac{1-q^{-(r_0-1)}}{1-q^{-(r_0-2)}}.
\end{aligned}
\]

\textit{Case 2}: $\ell(\sr_{r_0-2}\sr_{r_0-3}^{i_{r_0-3}}\cdots \sr_1^{a_1} \bw)=\ell(\sr_{r_0-3}^{i_{r_0-3}}\cdots \sr_1^{a_1} \bw)-1$. When $a_{r_0-2}=0$,
\[
\dfrac{g(\sr_{r_0-2}^{a_{r_0-2}}\cdots \sr_1^{a_1} \bw,0)}{g(\sr_{r_0-3}^{a_{r_0-3}}\cdots \sr_1^{a_1} \bw,0)}=1.
\]
By Lemma \ref{lem:5}, $\la \tilde y,\al_{r_0-2}\ra>0$ and $k_{\tilde y,\al_{r_0-2}}=1$. Thus,
\[
\tau(\sr_{r_0-2},{}^{\sr_{r_0-2}\cdots \sr_1}\ochi,\sr_{r_0-3}^{a_{r_0-3}}\cdots \sr_1^{a_1} \bw[0],\sr_{r_0-2}^{a_{r_0-2}}\cdots \sr_1^{a_1} \bw[0])
=\dfrac{1-q^{-1}}{1-q^{-(r_0-2)}}q^{-(r_0-2)}.
\]
When $a_{r_0-2}=1$,
\[
\tau(\sr_{r_0-2},{}^{\sr_{r_0-2}\cdots \sr_1}\ochi,\sr_{r_0-3}^{i_{r_0-3}}\cdots \sr_1^{a_1} \bw[0],\sr_{r_0-2}^{a_{r_0-2}}\cdots \sr_1^{a_1} \bw[0])=\bfgg_{\psi^{-1}}(\la \tilde y_\rho,\al_{r_0-2}\ra Q(\al^\vee)),
\]
and
\[
\dfrac{g(\sr_{r_0-2}^{a_{r_0-2}}\cdots \sr_1^{a_1} \bw,0)}{g(\sr_{r_0-3}^{a_{r_0-3}}\cdots \sr_1^{a_1} \bw,0)}=\dfrac{1}{g(\sr_{r_0-2},\sr_{r_0-2}^{a_{r_0-2}}\cdots \sr_1^{a_1} \bw[0])}=\dfrac{1}{\bfgg_{\psi^{-1}}(\la \tilde y_\rho,\al_{r_0-2}\ra Q(\al^\vee))}.
\]
Then  the inner sum in \eqref{eq:break the sum} is
\[
\begin{aligned}
&\dfrac{1-q^{-1}}{1-q^{-(r_0-2)}}q^{-(r_0-2)}+\bfgg_{\psi^{-1}}(\la \tilde y_\rho,\al_{r_0-2}\ra Q(\al^\vee))\cdot \dfrac{1}{\bfgg_{\psi^{-1}}(\la \tilde y_\rho,\al_{r_0-2}\ra Q(\al^\vee))}\\
=&\dfrac{1-q^{-1}}{1-q^{-(r_0-2)}}q^{-(r_0-2)}+1=\dfrac{1-q^{-(r_0-1)}}{1-q^{-(r_0-2)}}.
\end{aligned}
\]
\end{proof}

This finishes the calculation of the inner sum in \eqref{eq:break the sum}.
We can now proceed for the other summations and deduce that \eqref{eq:break the sum} is
\[
\dfrac{1-q^{-2}}{1-q^{-1}}\cdots \dfrac{1-q^{-(r_0-2)}}{1-q^{-(r_0-3)}}\dfrac{1-q^{-(r_0-1)}}{1-q^{-(r_0-2)}} =\dfrac{1-q^{-(r_0-1)}}{1-q^{-1}}.
\]
By \eqref{eq:rewrite as a sum}, this implies that $\cw_{y}(0,\ochi)=g(\bw,0)$. The proof of Proposition \ref{prop:tau in the stable range} is complete.

\section{Main result}\label{sec:main result}

\subsection{Statement}

We now state our main result. We work with the group $\overline{\GL}_{r}$.
Let $\Delta'\subset \Delta$ so that the corresponding Levi subgroup is $\bm=\bgl_{r_1}\times \cdots \times \bgl_{r_k}$. Let $\ochi$ be an $\Delta'$-anti-exceptional character for $\overline{\GL}_r$. Define $e_i=r_1+\cdots+r_{i-1}$ and  $x_{ij}=\ochi_{e_i+1,e_j+1}$.

\begin{Thm}\label{thm:final formula}
Assume that
\begin{itemize}
\item for all $1\leq i\leq k$, $r_i\leq n_Q$,
\item for all $1\leq i,j\leq k$, $r_i+r_j>n_Q$
\item for $i=1,\cdots,k-1$,
\begin{equation}\label{eq:strange condition}
\left\lfloor \dfrac{(\sum_{j=1}^i r_j)(\sum_{j=i+1}^k r_j)}{n_Q}\right\rfloor=
(k-i)\left(\sum_{j=1}^i r_j\right)+
i\left(\sum_{j=i+1}^k r_j\right)
-i(k-i)n_Q.
\end{equation}
\end{itemize}
Then
\[
\cw_{\bw[0]}(0,\ochi)=g(\bw,0)\prod_{1\leq i<j\leq k}\ \prod_{l=n_Q-r_j+1}^{r_i}(1-x_{ij}q^{-l})
\]
for $\bw\in W(\bm)$.
\end{Thm}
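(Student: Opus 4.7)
The plan is to split the proof into a proportionality reduction followed by a direct computation. For the reduction, I will show $\cw_{\bw[0]}(0,\ochi) = g(\bw,0)\cdot \cw_0(0,\ochi)$ for any $\bw \in W(\bm)$. The Whittaker functional $\lam_{\bfs_{\bw[0]}}^\ochi$ factors through $\Theta(\oG/\oM,\ochi)$ viewed as a quotient of $I(\ochi)$, and since ${}^{\bw_M}\ochi$ is exceptional on $\bm$ when $\ochi$ is anti-exceptional, the anti-exceptional analogue of Proposition~\ref{prop:tau on weyl orbit} and Corollary~\ref{cor:proportional} (obtained by swapping the roles of $q^{-1}$ and $q$ in the simple-reflection identity of Theorem~\ref{thm:local coefficient matrix}) will give the claimed proportionality, with ratios $g(\bw,0)$ built from the simple-reflection Gauss-sum entries via a product over a reduced decomposition of $\bw$.

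For $\cw_0(0,\ochi)$, I will apply Proposition~\ref{prop:inductive formula} to the parabolic $\bp = \bm\bn$ associated to $\Delta'$, which yields
\[
\cw_0(0,\ochi) = \sum_{\bw_2 \in W^M}\sum_{\gamma'} \tau(\bw_2^{-1}, {}^{\bw_2}\ochi, 0, \gamma') \cdot \cw_{M,\gamma'}(0, {}^{\bw_2}\ochi) \prod_{\al>0,\, \bw^{M,-1}\bw_2(\al)<0} \frac{1-\ochi_\al q^{-1}}{1-\ochi_\al}.
\]
The inner Whittaker value factors over the blocks of $\bm = \bgl_{r_1}\times \cdots \times \bgl_{r_k}$ via Lemmas~\ref{lem:10} and~\ref{lem:m to gl on w}. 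Each block lies in the stable range $r_i \leq n_Q$, so Lemma~\ref{lem:3} and Proposition~\ref{prop:tau in the stable range} reduce each factor to a single Gauss-sum term $g(\cdot,0)$; Lemma~\ref{lem:dependence on chi} ensures that the anti-exceptional property of $\ochi$ on each block is preserved under the twist by $\bw_2$, as only the restriction to $\oT_{Q,n}^{sc}$ matters for the calculation.

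What remains is to identify the outer sum with $\prod_{i<j}\prod_{l=n_Q-r_j+1}^{r_i}(1-x_{ij}q^{-l})$. After cancelling the denominators $1-\ochi_\al$ from the local coefficient matrix entries of Theorem~\ref{thm:local coefficient matrix} against the matching Gindikin-Karpelevich factors, only numerator contributions $1-x_{ij}q^{-l}$ survive, ranging over cross-roots between blocks. The main obstacle will be the combinatorial bookkeeping of this cancellation: since $r>n_Q$ in general, the orbit of $0$ under $W$ is no longer free, so several Weyl elements can map to the same coset modulo $Y_{Q,n}$ and their Gauss-sum contributions must consolidate via the identity $\bfgg_{\psi^{-1}}(k)\bfgg_{\psi^{-1}}(-k) = q^{-1}$ used throughout Section~\ref{sec:calculation of local matrix}. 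The floor condition~\eqref{eq:strange condition} is precisely the arithmetic input that regulates this consolidation: it asserts that per pair $(i,j)$ exactly $r_i+r_j-n_Q$ numerator factors $(1-x_{ij}q^{-l})$ survive, matching the stated range of $l$. Handling all cross-block pairs simultaneously, rather than one at a time as in the two-block case of~\cite{Suzuki98}, is what makes the combinatorics genuinely more delicate here, and this is where the bulk of the work will go.
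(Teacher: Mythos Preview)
Your reduction of the general case $\bw \in W(\bm)$ to the base case $\bw = \id$ via proportionality (Corollary~\ref{cor:proportional} together with Lemma~\ref{lem:10} and Proposition~\ref{prop:tau in the stable range}) matches the paper's argument.

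Your plan for the base case $\cw_0(0,\ochi)$, however, has a genuine gap. You propose to apply Proposition~\ref{prop:inductive formula} to the parabolic with Levi $\bm = \bgl_{r_1}\times\cdots\times\bgl_{r_k}$, factor the inner term $\cw_{M,\gamma'}(0,{}^{\bw_2}\ochi)$ over the blocks, and then invoke Lemma~\ref{lem:3} and Proposition~\ref{prop:tau in the stable range} on each block. But both of those results require the character on the block to be \emph{anti-exceptional}, and for $\bw_2 \in W^M$ with $\bw_2 \neq \id$ the twisted character ${}^{\bw_2}\ochi$ is \emph{not} anti-exceptional on the blocks of $\bm$: if $\al \in \Delta'$ is adjacent to a simple root moved by $\bw_2$, then $({}^{\bw_2}\ochi)_\al = \ochi_{\bw_2^{-1}(\al)}$ involves the free parameters $x_{ij}$ and is generically not equal to $q$. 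Lemma~\ref{lem:dependence on chi} only asserts that $\tau$ depends on $\ochi|_{\oT^{sc}_{Q,n}}$; it does not say this restriction is invariant under the Weyl action, and indeed it is not. Hence Lemma~\ref{lem:3} fails (the Gindikin--Karpelevich factors $c(\bw_M\bw,{}^{\bw_2}\ochi)$ no longer kill all but one term), and you cannot collapse $\cw_{M,\gamma'}(0,{}^{\bw_2}\ochi)$ to a single Gauss sum. The ``combinatorial bookkeeping'' you describe thus has no starting point.

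The paper's argument for $\cw_0(0,\ochi)$ is entirely different and avoids this obstruction. It first invokes McNamara's Gelfand--Tsetlin description \cite{McNamara11} to know that $\cw_0(0,\ochi)$ is a \emph{polynomial} in $x_{12},\dots,x_{k-1,k}$. Then: (i) each linear factor $1-x_{ij}q^{-l}$ is shown to divide it by a representation-theoretic vanishing argument --- along the hyperplane $x_{ij}=q^l$ one exhibits an intertwining operator, nonzero on the spherical vector, from a principal series carrying a block of size $>n_Q$, and Corollary~\ref{cor:vanishing of unramified whittaker} forces $\cw_0(0,\ochi)=0$ there; (ii) the total degree is bounded above by the Gelfand--Tsetlin combinatorics, which is exactly where condition~\eqref{eq:strange condition} enters; (iii) the constant term is shown to be $1$ because only the lowest pattern contributes. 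This divisibility-plus-degree-bound strategy, adapted from \cite{Kaplan} Theorem~43, is the missing idea.
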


We will first prove the result for $\bw=\id$ and then for the general case. We begin with some remarks in the case of $\bw=\id$.
\begin{Rem}
\begin{enumerate}
  \item A result of \cite{McNamara11} says that $\cw_0(0,\ochi)$ is a weighted sum over a finite crystal graph and is therefore a polynomial in $x_{12},\cdots,x_{k-1,k}$. Note that everything stated here is done in $\overline{\SL}_r$ so McNamara's result does apply.
  \item When $\bw=\id$, we can rewrite the right-hand side as a polynomial in $x_{12},\cdots, x_{k-1,k}$. Let $f(x_{12},\cdots,x_{k-1,k})$ be this polynomial. The monomial with highest total degree is
\[
\prod_{1\leq i<j\leq k}x_{ij}^{r_i+r_j-n}=x_{12}^{b_1}\cdots x_{k-1,k}^{b_{k-1}}
\]
where
\[
b_i=(k-i)\left(\sum_{j=1}^i r_j\right)+i\left(\sum_{j=i+1}^k r_j\right)-i(k-i)n_Q
\]
is the right-hand side of \eqref{eq:strange condition}.
\item The condition in \eqref{eq:strange condition} does seem strange and this is not satisfied for all tuples. However, it is easy to check that \eqref{eq:strange condition} holds when $(r_1,\cdots,r_k)=(n_Q,\cdots,n_Q,n')$ where $1\leq n'\leq n_Q$.
\item We expect the result to be true without the condition in \eqref{eq:strange condition}. But we do not know how to extend it at the moment.
\end{enumerate}
\end{Rem}

\subsection{Proof of Theorem \ref{thm:final formula}: the base case}

For the base case, the proof presented here is adapted from \cite{Kaplan} Theorem 43. We will give also examples to explain some ideas and give the reader some flavor of the proof.

We now give an outline of the proof. We first observe that, by the results in \cite{McNamara11}, $\cw_0(0,\ochi)$ is weight sum over certain Gelfand-Tsetlin patterns and is therefore a polynomial in $x_{12},\cdots,x_{k-1,k}$. 

It is sufficient to prove the following three things:
\begin{enumerate}
\item Every factor of $f(x_{12},\cdots,x_{k-1,k})$ divides $\cw_0(0,\ochi)$.
\item The monomial of the highest total degree of $\cw_{0}(0,\ochi)$ is the same as $f(x_{12},\cdots,x_{k-1,k})$, up to a scalar.
\item The constant coefficient of $f(x_{12},\cdots,x_{k-1,k})$ is $1$. So it is enough to prove that the constant coefficient of $\cw_{0}(0,\ochi)$ is $1$.
\end{enumerate}

The first one is proved by a representation-theoretic argument. The last two are proved using the formula of \cite{McNamara11} Sect. 8, which is based on the Gelfand-Tsetlin description of \cite{BBF11} Sect. 8. Note that the proof in \cite{Kaplan} Theorem 43 does not use uniqueness of Whittaker models.

We start with the representation-theoretic argument.

\begin{Ex}\label{ex:main section}
We assume that $n=3$, $r=8$, $(r_1,r_2,r_3)=(3,3,2)$, $Q(\al^\vee)=1$. Let $\Delta'=\{\al_1,\al_2,\al_4,\al_5,\al_7\}$ and $\ochi$ be a $\Delta'$-anti-exceptional character. Therefore, $\ochi_{\al}=q$ for $\al\in \Delta'$. Let $x_1=\ochi_{\al_1+\al_2+\al_3}$ and $x_2=\ochi_{\al_4+\al_5+\al_6}$. It is easy to check, for instance, $\ochi_{\al_3}=q^{-2}x_1$.

Clearly if $x_1=q^3$, then $\ochi$ is a $\Delta'\cup \{\al_3\}$-anti-exceptional character and $\cw_0(0,\ochi)=0$ by Corollary \ref{cor:vanishing of unramified whittaker}. In other words, as a function of $x_1,x_2$, $\cw_0(0,\ochi)$ is zero along the hyperplane $1-q^{-3}x_1=0$.

Now let us consider the following question: is $\cw_0(0,\ochi)$ along other hyperplanes? A quick examination shows that $1-q^{-2}x_1=0$ does the job. In fact, under this assumption, $\ochi_{\al_3+\al_4}=q$. Thus ${}^{\sr_4}\ochi$ is an $\{\al_1,\al_2,\al_3\}$-anti-exceptional character. We consider the following intertwining operator $T_{\sr_4,{}^{\sr_4}\ochi}:I({}^{\sr_4}\ochi)\to I(\ochi)$.
Using Lemma \ref{lem:gk for anti 2}, it is easy to check
\[
T_{\sr_4,{}^{\sr_4}\ochi}(\phi_K)=\dfrac{1-q^{-1}}{1-q^{-2}}\phi'_K.
\]
If $\cw_0(0,\ochi)\neq 0$, then by composing this Whittaker functional with $T_{\sr_4,{}^{\sr_4}\ochi}$, we obtain a nonzero Whittaker functional on $I({}^{\sr_4}\ochi)$. However, this contradicts with Corollary \ref{cor:vanishing of unramified whittaker} as ${}^{\sr_4}\ochi$ is an $\{\al_1,\al_2,\al_3\}$-anti-exceptional character.

The same argument shows that $\cw_0(0,\ochi)=0$ if $1-q^{-1}x_1=0$. The same argument can be applied for $x_2$.

We now consider the hyperplane $1-\ochi_{\al_1+\cdots+\al_6}q^{-2}=1-x_1x_2q^{-2}=0$. With this assumption, $\ochi_{\al_3+\cdots+\al_7}=q$. Therefore, ${}^{\sr_4\cdots\sr_7}\ochi$ is $\{\al_1,\al_2,\al_3\}$-anti-exceptional. The intertwining operator
\begin{equation}\label{eq:intertwining in examples}
T_{\sr_7\cdots\sr_4,{}^{\sr_4\cdots\sr_7}\ochi}:I({}^{\sr_4\cdots\sr_7}\ochi)\to I(\ochi)
\end{equation}
could have zeros. But $c(\sr_7\cdots\sr_4,{}^{\sr_4\cdots\sr_7}\ochi)$ has two types of factors: the first of the form $1-x_2q^{-l}$ for some integer $l$ and the factor as in the statement of Lemma \ref{lem:gk for anti 2}. In any case, \eqref{eq:intertwining in examples} is nonzero on spherical vectors along $1-x_1x_2q^{-2}=0$. Now the same argument as above shows that $\cw_0(0,\ochi)=0$.

By repeating this argument, one can find $3+2+2$ factors of $\cw_0(0,\ochi)$. They are exactly the factors appearing in the statement of Theorem \ref{thm:final formula}.
\end{Ex}

\begin{Lem}
If $1-x_{ij}q^{-l}=0$ for $i< j$ and $n_Q-r_j+1\leq l\leq r_i$, then $\cw_{0}(0,\ochi)=0$.
\end{Lem}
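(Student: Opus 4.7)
The plan is to generalize the argument of Example~\ref{ex:main section} from the single hyperplane treated there to every factor $1-x_{ij}q^{-l}$ in the statement. Fix $(i,j,l)$ with $i<j$ and $n_Q-r_j+1\le l\le r_i$, and assume $x_{ij}=q^l$. It will suffice to exhibit a Weyl element $\bw\in W$ and a subset $\Delta''\subset\Delta$ such that (a) the twist $\ochi':={}^{\bw}\ochi$ is $\Delta''$-anti-exceptional, with associated Levi $\bm''$ containing at least one $\GL$-block of size $>n_Q$; and (b) the Gindikin-Karpelevich coefficient $c(\bw^{-1},\ochi')$ is nonzero, so that $T_{\bw^{-1},\ochi'}\colon I(\ochi')\to I(\ochi)$ sends $\phi_K^{\ochi'}$ to a nonzero scalar multiple of $\phi_K^{\ochi}$.

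Granting (a) and (b), the vanishing $\cw_{0}(0,\ochi)=0$ will follow by the contradiction argument of the example. If $\cw_{0}(0,\ochi)\ne 0$, the Whittaker functional $W_{\lam_0^{\ochi}}$ on $I(\ochi)$ is nontrivial on $\phi_K^\ochi$; composing with $T_{\bw^{-1},\ochi'}$ yields a Whittaker functional on $I(\ochi')$ that is nonzero on $\phi_K^{\ochi'}$. The character ${}^{\bw_{M''}}\ochi'$ is $\Delta''$-exceptional, and the same computation as in Lemma~\ref{lem:gk for exceptiona} applied within $\bm''$ gives $c(\bw_{M''},{}^{\bw_{M''}}\ochi')\ne 0$; hence $\phi_K^{\ochi'}$ lies in the relative theta subrepresentation $\Theta(\oG/\oM'',{}^{\bw_{M''}}\ochi')\subset I(\ochi')$, which by Theorem~\ref{thm:whittaker for relative theta} is non-generic. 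The restriction of the pulled-back Whittaker functional to this non-generic subrepresentation must therefore vanish, contradicting its nonvanishing on $\phi_K^{\ochi'}$.

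The content is thus in the construction of $\bw$. Using that $\ochi_{\al_m}=q$ for $\al_m\in\Delta'$, $\ochi_{\al_{e_{s+1}}}=x_{s,s+1}/q^{r_s-1}$ for the separators between consecutive blocks, and (for $j>i+1$) the factorization $x_{ij}=\prod_{s=i}^{j-1}x_{s,s+1}$, a direct multiplicative computation produces a positive root $\gamma$ supported in the window of simple roots spanning blocks $i$ through $j$, containing each of the separators $\al_{e_{i+1}},\ldots,\al_{e_j}$ exactly once, of length $r_i+\cdots+r_{j-1}-l+1$, and satisfying $\ochi_\gamma=q$ under the constraint $x_{ij}=q^l$. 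The inequalities $n_Q-r_j+1\le l\le r_i$ are precisely what guarantee that such a $\gamma$ fits in the window and that the resulting $\Delta''$ creates a block of size $>n_Q$. I will take $\bw$ to be a product of simple reflections supported in the span of blocks $i$ through $j$ so that $\bw^{-1}$ carries a suitable simple separator $\al_{e_{s_0}}$ ($i\le s_0<j$) to $\gamma$; such a $\bw$ exists in the parabolic subgroup $W(\GL_{r_i+\cdots+r_j})\subset W$ by standard type-$A$ combinatorics.

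The main obstacle will be verifying (b) uniformly in $(i,j,l)$: with a reduced decomposition of $\bw^{-1}$ adapted to the path from $\al_{e_{s_0}}$ to $\gamma$ and the cocycle relation of Lemma~\ref{lem：local coefficient matrix cocycle}, $c(\bw^{-1},\ochi')$ should reduce to a telescoping product of nonzero ratios $(1-q^{-(m+1)})/(1-q^{-m})$ analogous to those in Lemmas~\ref{lem:gk for anti 1} and~\ref{lem:gk for anti 2}, interleaved with finite nonzero contributions from the separator positions whose $\ochi'$-values are determined by the single constraint $x_{ij}=q^l$. The careful bookkeeping of which simple reflections contribute at each stage, and the verification that no factor in this product reaches $0$ or $\infty$, is the only routine-but-delicate portion of the argument.
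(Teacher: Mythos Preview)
Your overall strategy---pull back the Whittaker functional through an intertwining operator $T_{\bw^{-1},{}^{\bw}\ochi}$ to a principal series whose inducing character has a ``too large'' anti-exceptional block, and contradict Corollary~\ref{cor:vanishing of unramified whittaker}/Theorem~\ref{thm:whittaker for relative theta}---is exactly the paper's. The gap is in your construction of $\bw$. Requiring only that $\bw^{-1}$ carry some simple separator to a root $\gamma$ with $\ochi_\gamma=q$ does not pin down $\bw$, and for a generic such $\bw$ neither (a) nor (b) is forced: you get $({}^{\bw}\ochi)_{\al_{e_{s_0}}}=q$ at one position, but nothing tells you the remaining simple roots in the intended block also satisfy $({}^{\bw}\ochi)_\al=q$, so the claimed $\Delta''$-anti-exceptionality does not follow from what you wrote. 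Likewise your sketch of (b) as a telescoping product cannot be checked without a concrete $\bw$.

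The paper's fix is to take $\bw$ to be a block permutation. Write $\ochi\sim(\ochi_1,\ldots,\ochi_k)$ and split the $j$-th block as $\ochi_j\sim(\ochi_j^\dagger,\ochi_j^\ddagger)$ with the first piece of size $r_i-l$ (allowed to be $0$). Let $\bw$ be the unique Weyl element realizing
\[
{}^{\bw}\ochi\sim(\ochi_1,\ldots,\ochi_i,\ \ochi_j^\ddagger,\ \ochi_{i+1},\ldots,\ochi_{j-1},\ \ochi_j^\dagger,\ \ochi_{j+1},\ldots,\ochi_k).
\]
Under $x_{ij}=q^l$ the pair $(\ochi_i,\ochi_j^\ddagger)$ is a single anti-exceptional block of size $r_j+l\ge n_Q+1$, so (a) is immediate. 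For (b), every factor of $c(\bw^{-1},{}^{\bw}\ochi)$ comes from swapping $\ochi_j^\ddagger$ either past some $\ochi_{j'}$ with $i<j'<j$---giving numerators $1-x_{j'j}q^{-l'}$, which are generically nonzero on the hyperplane $x_{ij}=q^l$---or past $\ochi_j^\dagger$, which contributes only factors of the shape in Lemma~\ref{lem:gk for anti 2}. No delicate bookkeeping is needed. I recommend you replace the abstract ``$\bw$ carrying $\al_{e_{s_0}}$ to $\gamma$'' by this explicit cycle; then both (a) and (b) become one-line verifications.
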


\begin{proof}
We write $\ochi\sim(\ochi_1,\cdots, \ochi_k)$ where $\ochi_m$ is anti-exceptional for the group $\overline{\GL}_{r_m}$. We further write $\ochi_j\sim (\ochi_j^\dagger, \ochi_j^\ddagger)$  where the size of $\ochi_j^\dagger$ is $r_i-l$ (which could be $0$). Let $\bw$ be the Weyl group element so that
\[
{}^{\bw}\ochi\sim (\ochi_1, \cdots, \ochi_i, \ochi_j^\ddagger,\cdots, \ochi_{j-1},  \ochi_j^\dagger, \ochi_{j+1}, \cdots, \ochi_k).
\]
Observe that since $1-x_{ij}q^{-1}=0$, $(\ochi_i,\ochi_j^\ddagger)$ is an anti-exceptional character of size $r_i+r_j-r_i+l=r_j+l\geq n_Q+1$. Thus ${}^{\bw}\ochi$ is an anti-exceptional character that satisfies the condition in Corollary \ref{cor:vanishing of unramified whittaker}.

We now check that the intertwining operator
\[
T_{\bw^{-1},{}^{\bw}\ochi}:I({}^{\bw}\ochi)\to I(\ochi)
\]
is nonzero on spherical vectors along $1-x_{ij}q^{-l}=0$. Indeed, it is enough check that $c(\bw^{-1},{}^{\bw}\ochi)\neq 0$ along $1-x_{ij}q^{-l}=0$. A quick calculation shows that the denominator of $c(\bw^{-1},{}^{\bw}\ochi)$ is either of the form $1-x_{jj'}q^{-l'}$ for $j'\neq i,j$, or a factor of the form as in Lemma \ref{lem:gk for anti 2}. In either case, this is nonzero when $1-x_{ij}q^{-l}=0$.

Suppose now that $\cw_0(0,\ochi)\neq 0$ along $1-x_{ij}q^{-l}=0$. We then have a Whittaker functional on $I({}^{\bw}\ochi)$ via
\[
I({}^{\bw}\ochi)\to I(\ochi)\to \bc.
\]
This is nonzero since it is nonzero on the spherical vector $\phi_K$. However, by our discussion above, this contradicts Corollary \ref{cor:vanishing of unramified whittaker}.
\end{proof}

Thus we know that $(1-\chi_{ij}q^{-l})\mid \cw_0(0,\ochi)$. As the factors of $f(x_{12},\cdots,x_{k-1,k})$ are distinct and $\bc[x_{12},\cdots,x_{k-1,k}]$ is a unique factorization domain, $f(x_{12},\cdots,x_{k-1,k})$ divides $\cw_{0}(0,\ochi)$.

We now use the formula in \cite{McNamara11} to estimate the degree of $\cw_0(0,\ochi)$. We briefly recall how this is derived. In \cite{McNamara11}, with a choice of a reduced decomposition of the longest element in the Weyl group, McNamara introduces an algorithm, called \textit{explicit Iwasawa decomposition}. This is to write an element $u\in U^-$ as $u=tnk$ where $t\in T$, $n\in U$ and $k\in K$. Equivalently, this is to write $U^-$ as a cell decomposition $U^{-}=\bigsqcup_{\mathbf{m}}C_{\mathbf{m}}$, where $\mathbf{m}$ is a tuple of integers. Thus one can write
\[
\int_{U^-}=\sum_{\mathbf{m}}\int_{C_{\mathbf{m}}}
\]
and this yields a combinatorial sum of these integrals. The main result in \cite{McNamara11} says that unramified Whittaker functions can be calculated in this way, and the tuples $\mathbf{m}$ with nonzero contributions are in bijection with a set of Gelfand-Tsetlin patterns. The contribution can be calculated in terms of Gauss sums.

Recall the a strict Gelfand-Tsetlin pattern is a triangular array $\{a_{i,j}\}$ of non-negative integers, such that each row is strictly decreasing and $a_{i,j} \geq a_{i+1,j}\geq a_{i_{j+1}}$ for all $i,j$ such that all entries exist. For each $i$, define
\[
d_i:=\sum_{j=1}^{r-i+1}a_{i,j}-a_{1,i+j-1}=\sum_{j=1}^{r-i+1}a_{i,j}-(r-i-j+1).
\]

Here $\cw_{0}(0,\ochi)$ is expressed as a sum over the set of Gelfand-Tsetlin patterns with the first row $a_{1,j}=r-j, 1\leq j\leq r$. The resulting monomial for such a pattern is of the form
\[
C\prod_{l=1}^{k-1}x_{l,l+1}^{d_{e_{l+1}+1}/n_Q},
\]
where $C$ is a certain product of powers of $q$ and Gauss sums.

In this paper, we choose a particular maximal abelian subgroup $\oA$. This imposes another condition on the patterns we need to consider. With this choice of torus, by Lemma \ref{lem:supp of spherical}, the torus elements that lies in the support of $\phi_K$ are in $\oA$. Also the torus elements appearing in the calculation are in $\overline{\SL}_r$. Recall that $Y_{Q,n}\cap Y^{sc}=Y_{Q,n}^{sc}=n_QY^{sc}$. As a consequence, the only patterns to consider are those where $d_i\equiv 0 \mod n_Q$. (See also \cite{Kaplan} Theorem 43.)

\begin{Lem}\label{lem:bounding the degree}
The monomial of the highest total degree in $\cw_0(0,\ochi)$ is at most the same as in $f(x_{12},\cdots,x_{k-1,k})$, up to a scalar.
\end{Lem}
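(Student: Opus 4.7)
The plan is to use McNamara's Gelfand--Tsetlin pattern formula from \cite{McNamara11}, together with the interlacing constraints for strict patterns and the divisibility condition $n_Q\mid d_i$, to bound the total degree of each monomial appearing in $\cw_0(0,\ochi)$ by the total degree of the leading monomial of $f(x_{12},\ldots,x_{k-1,k})$.

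First, I would invoke the formula recalled just before the statement of the lemma: $\cw_0(0,\ochi)=\sum_P C_P\prod_{l=1}^{k-1}x_{l,l+1}^{d_{e_{l+1}+1}(P)/n_Q}$, where $P$ ranges over strict Gelfand--Tsetlin patterns with first row $a_{1,j}=r-j$, subject to $d_i(P)\equiv 0\pmod{n_Q}$ for every $i$. The congruence comes from the choice $\oA=Z(\oT)\cdot(K\cap T)$ combined with the identification $Y_{Q,n}\cap Y^{sc}=n_Q Y^{sc}$, as foreshadowed in the paragraph preceding the lemma. Since the total degree of the monomial attached to $P$ is $\sum_{l=1}^{k-1} d_{e_{l+1}+1}(P)/n_Q$, it suffices to bound each $d_{e_{l+1}+1}(P)$ individually.

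Next, I would apply the interlacing inequalities row by row. Iterating $a_{i-1,j+1}\leq a_{i,j}\leq a_{i-1,j}$ down from the top yields $a_{1,i+j-1}\leq a_{i,j}\leq a_{1,j}$, so each summand in the definition of $d_i$ satisfies $0\leq a_{i,j}-a_{1,i+j-1}\leq (r-j)-(r-i-j+1)=i-1$. Specializing $i=e_{l+1}+1$, the row has $r-i+1=\sum_{j>l}r_j$ terms, each bounded by $i-1=\sum_{j\leq l}r_j$, whence $d_{e_{l+1}+1}(P)\leq (\sum_{j\leq l}r_j)(\sum_{j>l}r_j)$. Combined with $n_Q\mid d_{e_{l+1}+1}(P)$, this yields $d_{e_{l+1}+1}(P)/n_Q\leq \lfloor (\sum_{j\leq l}r_j)(\sum_{j>l}r_j)/n_Q\rfloor=b_l$, where the last equality is exactly the hypothesis \eqref{eq:strange condition}. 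Summing over $l=1,\ldots,k-1$ produces the bound $\sum_{l=1}^{k-1}b_l$, which is the total degree of the leading monomial $\prod_{i<j}x_{ij}^{r_i+r_j-n_Q}$ of $f$ as identified in the remark.

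The main obstacle is not the degree arithmetic itself, which becomes essentially formal once McNamara's formula is in hand, but rather the upstream bookkeeping: one must verify carefully that with our particular choice of maximal abelian subgroup $\oA$, the patterns contributing to $\cw_0(0,\ochi)$ are precisely those with $n_Q\mid d_i$ for every row index $i$, not only for those appearing in the exponents $d_{e_{l+1}+1}$. Once this restriction is justified, the role of the somewhat opaque hypothesis \eqref{eq:strange condition} becomes transparent: it is exactly what converts the floor of a single product $\lfloor(\sum_{j\leq l}r_j)(\sum_{j>l}r_j)/n_Q\rfloor$ into the additive expression $b_l$, so that the per-row bounds assemble into the degree of the leading monomial of $f$ rather than something strictly smaller.
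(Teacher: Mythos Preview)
Your proposal is correct and follows essentially the same approach as the paper: bound each row sum $d_{e_{l+1}+1}$ by the product $(\sum_{j\le l}r_j)(\sum_{j>l}r_j)$ via the interlacing inequalities, use the divisibility $n_Q\mid d_{e_{l+1}+1}$ to replace this by the floor, and then invoke \eqref{eq:strange condition} to identify the result with $b_l$. Your write-up is slightly more explicit about the termwise inequality $a_{i,j}-a_{1,i+j-1}\le i-1$, but the logic is identical to the paper's argument.
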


\begin{Ex}[Continuation of Example \ref{ex:main section}]\label{ex:main section 2}
We continue with the set up in Example \ref{ex:main section}. A quick calculation shows that the monomial of the highest total degree should be $x_1^5x_2^4$.

Now let us check it from the Gelfand-Tsetlin description. We require that the $4$th and $7$th row to be as large as possible. So the maximal possible $4$th row is $(7 ~ 6 ~ 5 ~ 4 ~ 3 )$ and the maximal possible $7$th row is $(7~6)$. This gives $d_4\leq 3\times 5=15$ and $d_7\leq 6\times 2=12$. Therefore, the monomial with the highest total degree is again $x_1^5x_2^4$.
\[
\begin{pmatrix}
 7 &   &  6 &   &  5 &   &  4 &   &   3 &   &  2 &   &  1  &   & 0 \\
   & \ast  &  & \ast  &  & \ast  &  & \ast  &  & \ast  &  &  \ast &  & \ast &  \\
  & & \ast  &  & \ast  &  & \ast  &  & \ast  &  & \ast  &  &  \ast &  &   \\
   &   &  & 7  &  & 6  &  & 5 &  & 4  &  &  3 &  &  &  \\
   & & &  & \ast  &  & \ast  &  & \ast  &  & \ast  &  &  &  &   \\
    &   &  &   &  & \ast  &  & \ast  &  & \ast  &  &   &  &  &  \\
    &   &   &   &   &   &  7 &   &   6 &   &   &   &    &   &  \\
   &   &  &   &  &   &  & \ast  &  &  &  &   &  &  &  \\
\end{pmatrix}.
\]
\end{Ex}

\begin{proof}[Proof of Lemma \ref{lem:bounding the degree}]
We now seek the monomial of highest total degree. We consider patterns with the maximal entries $a_{e_{l+1}+1,j}$ for $1\leq l\leq k-1$ and $1\leq j\leq r-(e_{l+1}+1)$ possible. We now fix $l$. Note that $a_{e_{l+1}+1,1}\leq r-1$ as $a_{1,1}=r-1$. The maximal possible choice of row $e_{l+1}+1$ is
\[
(r-1,r-2,\cdots)
\]
Therefore, $d_{e_{l+1}+1}\leq e_{l+1}(\sum_{j=l+1}^{k}r_j)= (\sum_{j=1}^lr_j)(\sum_{j=l+1}^{k}r_j)$. As $d_{e_{l+1}+1}/n_Q$ must be an integer, this implies that
\[
d_{e_{l+1}+1}/n_Q\leq\left\lfloor\dfrac{(\sum_{j=1}^l r_j)(\sum_{j=l+1}^k r_j)}{n}\right\rfloor.
\]
The result now follows from \eqref{eq:strange condition}.
\end{proof}

By the above two results, we know that $\cw_0(0,\ochi)=cf(x_{12},\cdots,x_{k-1,k})$ for some constant $c$. It remains to compute a single coefficient of $f$. In \cite{Kaplan}, the highest monomial is used for this purpose. Here, we calculate the constant coefficient. We claim that only the lowest pattern contributes to the constant coefficient and the contribution is therefore $1$.

\begin{Ex}[Continuation of Example \ref{ex:main section 2}]
We are again in the setup of Example \ref{ex:main section}. We would like to show that only the lowest pattern contributes to the constant coefficient. First of all, we have $d_4=d_7=0$. This determines the $4$th and $7$th rows. These entries are as small as possible. Thus some entries in the other rows are determined. So far we have:
\[
\begin{pmatrix}
 7 &   &  6 &   &  5 &   &  4 &   &   3 &   &  2 &   &  1  &   & 0 \\
   & a_{21}  &  & a_{22}  &  & 4  &  & 3  &  & 2  &  &  1 &  & 0 &  \\
  & & a_{31}  &  & 4  &  & 3  &  & 2  &  & 1 &  &  0 &  &   \\
   &   &  & 4  &  & 3  &  & 2 &  & 1  &  &  0 &  &  &  \\
   & & &  & a_{51}  &  & a_{52}  &  & 1  &  & 0  &  &  &  &   \\
    &   &  &   &  & a_{61}  &  & 1  &  & 0  &  &   &  &  &  \\
    &   &   &   &   &   &  1 &   &   0 &   &   &   &    &   &  \\
   &   &  &   &  &   &  & a_{81}  &  &  &  &   &  &  &  \\
\end{pmatrix}.
\]
We next show that $d_i=0$ for all $i$. This determines the pattern completely. For instance,
\[
d_2=(a_{21}-6)+(a_{22}-5)\leq (a_{11}-6)+(a_{12}-5)=2.
\]
As $d_2\equiv 0\mod 3$, we must have $d_2=0$. The other cases can be proved similarly.
\end{Ex}

\begin{Lem}
Only the lowest pattern contributes to the constant coefficient of $\cw_0(0,\ochi)$.
\end{Lem}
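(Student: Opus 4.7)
The plan is to combine McNamara's explicit formula expressing $\cw_0(0,\ochi)$ as a weighted sum over strict Gelfand--Tsetlin patterns with first row $(r-1,r-2,\ldots,0)$, with the divisibility restriction $d_i \equiv 0 \pmod{n_Q}$ and the interlacing conditions of the pattern, in order to show that the only pattern contributing to the constant term in $x_{12},\ldots,x_{k-1,k}$ is the lowest one.

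I would begin by recalling that McNamara's recipe assigns to each admissible pattern a monomial $C\prod_{l=1}^{k-1}x_{l,l+1}^{d_{e_{l+1}+1}/n_Q}$, where $C$ is a product of Gauss sums and powers of $q$. Hence the constant-coefficient part of $\cw_0(0,\ochi)$ only receives contributions from patterns with $d_{e_{l+1}+1}=0$ for every $l=1,\ldots,k-1$, which pins each row indexed $e_{l+1}+1$ to the unique minimum configuration $(r-e_{l+1}-1,r-e_{l+1}-2,\ldots,0)$.

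The heart of the argument is an induction on $i$ from $1$ to $r$ forcing every remaining row to be at its minimum as well. Setting $b_{i,j} := a_{i,j}-(r-i-j+1) \ge 0$, so that $d_i = \sum_j b_{i,j}$, two facts will do the work: the interlacing $a_{i-1,j}\ge a_{i,j}$ yields $b_{i,j}\le b_{i-1,j}+1$, while iterating $a_{i,j}\le a_{i+1,j-1}$ downward to the next fixed row $e_{l+1}+1$ gives $b_{i,j}=0$ whenever $j \ge e_{l+1}+2-i$. Proceeding inductively: given that all rows above $i$ are already at their minima, the first fact forces $b_{i,j}\le 1$, while the second restricts the possibly nonzero positions to $j \le e_{l+1}+1-i$, so
\[
0 \;\le\; d_i \;\le\; e_{l+1}+1-i \;\le\; r_l - 1 \;\le\; n_Q - 1.
\]
The tail rows $i > e_k+1$ admit the same type of bound by row-length considerations alone, since row $i$ then has at most $r_k - 1$ entries. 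Combining this bound with $d_i \equiv 0 \pmod{n_Q}$ forces $d_i = 0$, so row $i$ is at its minimum, closing the induction.

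The main obstacle will be verifying that the hypothesis $r_l \le n_Q$ produces the tight bound on the number of ``free'' columns in each intermediate row; this is precisely where the inequality $r_l - 1 < n_Q$ is used, and it explains why the minimal-row cascade works exactly under the rank assumptions of the theorem. Once every row has been forced to its minimum, only the lowest pattern remains, and its contribution to McNamara's sum is a direct, short computation---every entry position is completely determined with no accompanying Gauss-sum or $q$-factor to evaluate---yielding the value $1$.
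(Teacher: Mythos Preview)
Your proposal is correct and follows essentially the same approach as the paper's own proof. Both arguments first use the constant-term requirement to pin the rows $e_{l+1}+1$ to their minima, then propagate upward via the lower interlacing $a_{i,j}\le a_{i+1,j-1}$ to force the tail entries of each intermediate row to minimum, and finally run a top-down induction using $a_{i,j}\le a_{i-1,j}$ together with $r_l\le n_Q$ to bound each remaining $d_i$ strictly below $n_Q$, whence $d_i\equiv 0\pmod{n_Q}$ forces $d_i=0$; your shift variables $b_{i,j}=a_{i,j}-(r-i-j+1)$ are a convenient repackaging of exactly the same computation the paper carries out directly.
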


\begin{proof}
To find the term contributing to the constant coefficient, we must have
\[
d_{e_{l+1}+1}=0,\qquad l=1,\cdots, d-1.
\]
Given a fixed $l$, this determines row $e_{l+1}+1$, which is
\begin{equation}\label{eq:entries in certain row}
(r-e_{l+1}-1,\cdots,1,0).
\end{equation}
The last $r-e_{l+1}$ entries from row  $e_{l}+2$ to row $e_{l+1}$ are also determined. They are \eqref{eq:entries in certain row} as well. We now determine the remaining coefficients. 

We now fix $1\leq l\leq  k$. We argue by induction to show $d_{e_l+i}=0$ for $i=1,\cdots, r_l$.  The case $i=1$ follows from our discussion above. We now assume that $d_{e_l+i}=0$. Then row $e_l+i$ is
\[
(r-e_l-i,\cdots, 1,0).
\]
In other words, $a_{e_l+i,j}=r-e_l-i-j+1$.
Thus,
\[
\begin{aligned}
d_{e_l+i+1}=&\sum_{j=1}^{r_l-i} a_{e_l+i+1,j}-(r-e_l-i-j)\\
\leq& \sum_{j=1}^{r_l-i} a_{e_l+i,j}-(r-e_l-i-j)\\
=& \sum_{j=1}^{r_l-i} (r-e_l-i-j+1)-(r-e_l-i-j)=r_l-i<n_Q.\\
\end{aligned}
\]
As $d_{e_l+i+1}\equiv 0 \mod n_Q$, we deduce that $d_{e_l+i+1}=0$.

We now conclude that $d_i=0$ for $i$ and the pattern must be the lowest pattern. This completes the proof.
\end{proof}

It is straightforward to see that the contribution of the lowest pattern is $1$. The proof of the base case is now complete.

\subsection{Proof of Theorem \ref{thm:final formula}: the general case}

We now prove the general case of Theorem \ref{thm:final formula}. It remains to show that for $\bw\in W(\bm)$,
\[
\cw_{\bw[0]}(0,\ochi)=g(\bw,0)\cw_{0}(0,\ochi).
\]
By Corollary \ref{cor:proportional}, it suffices to show that
\[
\dfrac{\tau(\bw_M,{}^{\bw_M^{-1}}\ochi,\bw[0],0)}{\tau(\bw_M,{}^{\bw_M^{-1}}\ochi,0,0)} =g(\bw,0).
\]
We now use Lemma \ref{lem:10} to calculate the left-hand side. Suppose $\ochi\sim (\ochi_1,\cdots, \ochi_k)$ for some anti-exceptional characters $\ochi_1,\cdots,\ochi_k$. We have
\[
\begin{aligned}
&\tau(\bw_M,{}^{\bw_M^{-1}}\ochi,\bw[0],0)\\
=&\prod_{i=1}^k \tau\left(\bw_{\GL_{r_i}},{}^{\bw_{\GL_{r_i}}^{-1}}\ochi,\bw_i[0],0\right)\\
=&\prod_{i=1}^k g(\bw_i,0)=g(\bw,0).
\end{aligned}
\]
This finishes the proof.




\end{document}